\newtheorem{main thm}{Theorem}
\newtheorem{df}{\quad Definition}[section]
\newtheorem{thm}[df]{Theorem}
\newtheorem{lem}[df]{Lemma}
\newtheorem{cor}[df]{Corollary}
\newtheorem{prop}[df]{Proposition}
\newtheorem{rem}[df]{Remark}
\newcommand{\Z}{\mathbb{Z}}
\newcommand{\Q}{\mathbb{Q}}
\newcommand{\Zp}{\mathbb{Z}_p}
\newcommand{\Qp}{\mathbb{Q}_p}
\newcommand{\Fp}{\mathbb{F}_p}
\newcommand{\p}{\mathfrak{p}}
\renewcommand{\P}{\mathfrak{P}}
\newcommand{\alp}{\alpha}
\newcommand{\gam}{\gamma}
\newcommand{\del}{\delta}
\newcommand{\lam}{\lambda}
\newcommand{\Lam}{\Lambda}
\newcommand{\Gal}{{\rm Gal}}
\newcommand{\Ker}{{\rm Ker}}
\newcommand{\Coker}{{\rm Coker}}
\newcommand{\ord}{{\rm ord}}
\newcommand{\Ki}{K_{\infty}}
\newcommand{\isom}{\simeq}
\newcommand{\x}{\times}
\newcommand{\op}{\oplus}
\newcommand{\bigop}{\bigoplus}
\newcommand{\ox}{\otimes}
\newcommand{\inj}{\hookrightarrow}
\newcommand{\surj}{\twoheadrightarrow}
\newcommand{\up}{\stackrel}
\newcommand{\f}[2]{\dfrac{#1}{#2}}
\renewcommand{\tilde}{\widetilde}
\DeclareMathOperator*{\zetaprod}{{\displaystyle{\prod\kern-1.45em\coprod}}}
\begin{document}
\begin{center}
{\Large {Galois Coinvariants of the Unramified Iwasawa Modules 
\\
of Multiple $\mathbb{Z}_p$-Extensions}
}\\
\medskip{Takashi MIURA, Kazuaki MURAKAMI, Rei OTSUKI, and Keiji OKANO}
\footnote[0]{2010 \textit{Mathematics Subject Classification}. 
11R23.} 
\footnote[0]{\textit{Key Words}. multiple $\Zp$-extensions, Iwasawa modules, characteristic ideals.}
\end{center}


\begin{abstract}
For a CM-field $K$ and an odd prime number $p$, 
let $\widetilde K'$ be a certain multiple $\Z_p$-extension of $K$.
In this paper, we study several basic properties of the unramified Iwasawa module $X_{\widetilde K'}$ of $\widetilde K'$
as a $\Z_p[[\Gal(\widetilde K'/K)]]$-module.
Our first main result is a description of 
the order of a Galois coinvariant of $X_{\widetilde K'}$ in terms of 
the characteristic power series of the unramified Iwasawa module of the cyclotomic $\Z_p$-extension of $K$ 
under a certain assumption on the splitting of primes above $p$. 
Second one is that if $K$ is an imaginary quadratic field and $p$ does not split in $K$, 
we give a necessary and sufficient condition for which $X_{\widetilde K}$ is 
$\Z_p[[\Gal(\widetilde K/K)]]$-cyclic
under several assumptions on the Iwasawa $\lambda$-invariant and the ideal class group of $K$, 
where $\widetilde K$ is the $\Z_p^2$-extension of $K$.
\end{abstract}


\section{Introduction}\label{intro}
\subsection{The unramified Iwasawa modules}

Let $p$ be an arbitrary prime number, $K$ a finite extension of the rational number field $\Q$ and $\Ki/K$ the cyclotomic $\Zp$-extension.
For an arbitrary algebraic number field $F$, we denote by $X_F$ the Galois group of the maximal unramified abelian $p$-extension over $F$.
The module $X_{\Ki}$ is called the unramified Iwasawa module of $\Ki$.
In the case where $K$ is totally real, little is known about the structure of $X_{\Ki}$, although the Iwasawa main conjecture gives us highly nontrivial 
information about 
the minus part of $X_{\Ki}$ in the case where $K$ is a CM-field.
Greenberg conjectured in \cite{Greenberg76} that $X_{\Ki}$ would be finite if $K$ is totally real, which is called Greenberg's conjecture. 
A lot of efforts by a number of mathematicians have revealed that this conjecture holds true in many cases, 
but it still remains unsolved (in general).
%

We consider the maximal multiple $\Zp$-extension $\tilde{K}/K$ and its unramified Iwasawa module $X_{\tilde{K}}$.
It is known that $X_{\tilde{K}}$ is a finitely generated torsion $\Zp[[\Gal(\tilde{K}/K)]]$-module (see \cite{Greenberg73}).
There is a conjecture that $X_{\tilde{K}}$ would be pseudo-null as a $\Zp[[\Gal(\tilde{K}/K)]]$-module, which is called Greenberg's generalized conjecture
(``pseudo-null'' is defined 
in \S \ref{Proof of Thm 2}). 
Concerning this conjecture and its application, there are many studies 
(Bleher et al. \cite{BCGKPST}, Fujii \cite{Fujii17}, Itoh \cite{Itoh11}, Ozaki \cite{Ozaki01}, and Minardi \cite{Minardi}, etc.).
However, even if Greenberg's generalized conjecture is true, it just states that the characteristic ideal of $X_{\tilde{K}}$ is trivial, 
so that it seems difficult to consider any
analogues of the Iwasawa invariants and the Iwasawa main conjecture for $X_{\tilde{K}}$.
Therefore, it is worthwhile to study not only Greenberg's generalized conjecture, but also various basic properties of $X_{\tilde{K}}$, 
for example, the number of generators as a $\Zp[[\Gal(\tilde{K}/K)]]$-module, its Galois (co)invariants, and cohomogical properties.

In the following, we always assume that $p$ is odd.
In this paper, we study the Galois coinvariants of the unramified Iwasawa modules of a certain multiple $\Z_p$-extension in a relatively general situation.
Roughly speaking, this paper consists of two parts 
``{\it Split case}'' (\S \ref{Proof of Theorem 1}, \ref{Proof of Thm 2}) 
and 
``{\it Non-split case}'' (\S \ref{Proof of Thm 3}, \ref{Classification in the case lam=2}).
In {\it Split case}, we consider a certain multiple $\Z_p$-extension $\widetilde{K}'$ of a CM-field $K$ which satisfies the condition of Gross's conjecture of rank one.
If $K$ is an abelian extension in which $p$ splits completely and the degree of $K$ is coprime to $p$,
then $\widetilde{K}'$ is coincide with $\tilde{K}$.
Our first main result is a description of 
the order of a Galois coinvariant of $X_{\widetilde K'}$ in terms of 
the characteristic power series of $X_{\Ki}$
(Theorem \ref{thm 1}).
In addition, for an imaginary quadratic field $K$ in which $p$ splits as $(p)=\P \overline{\P}$, we give a theorem which suggests that the characteristic ideal of $\P$-ramified Iwasawa module of $\tilde{K}$ relates to the structure of $X_{\tilde{K}}$ (Theorem \ref{thm 2}).
On the other hand, in {\it Non-split case}, we consider $\widetilde K$ for an imaginary quadratic field $K$ in which $p$ does not split.
Our second main result is to give a necessary and sufficient condition for which $X_{\widetilde K}$ is 
$\Z_p[[\Gal(\widetilde K/K)]]$-cyclic
under several assumptions on the Iwasawa $\lambda$-invariant and the ideal class group of $K$
(Theorems \ref{thm 3} and \ref{main thm of classification lambda=2}).
Such $X_{\widetilde K}$ will be useful for studying the Iwasawa theory of multiple $\Z_p$-extensions.
We remark that our results do not need the assumption that Greenberg's generalized conjecture holds.

\subsection{Notation}\label{notation}

Throughout this paper, we use the following notation.
Let $p$ be an odd prime number, 
$k$ a totally real number field,
$K$ a CM-field such that $K/k$ is a finite abelian extension of degree coprime to $p$,
$\Ki/K$ the cyclotomic $\Zp$-extension, and $\tilde{K}$ the maximal multiple $\Zp$-extension over $K$.
For any (finite or infinite) extension $F$ over $\Q$, 
we denote by $L_F$ and $X_F$ the maximal unramified abelian $p$-extension of $F$ and 
the Galois group of $L_F$ over $F$, respectively. 
If $F$ is a finite extension of $\Q$, we denote by $A_F$ the $p$-Sylow subgroup of the ideal class group of $F$.
We identify 
$\Zp[[\Gal(\Ki/K)]]$ with the ring of formal power series $\Zp[[S]]$
by regarding a fixed generator of $\Gal(\Ki/K)$ as $1+S$.
For a character $\chi \colon \Gal(K/k) \to \overline{\Q}_p^\x$, we denote by $\mathcal{O}_\chi$ the ring obtained from $\Zp$ by adjoining all values of $\chi$.
For any $\Gal(K/k)$-module $M$, put
$
M^\chi:=M \ox_{\Zp[\Gal(K/k)]}\mathcal{O}_\chi$.
We denote by $\mu_p$ the set of all $p$-th roots of unity, and simply by $X/\mathfrak{a}$ a quotient module $X/\mathfrak{a}X$ .
Let $\Lam$ be the ring either
$\mathcal{O}[[S]]$ or $\mathcal{O}[[S,T]]$,
where $\mathcal{O}$ is $\Zp$ or $\mathcal{O}_\chi$.
For any finitely generated torsion $\Lam$-module $X$, 
we call a generator of the characteristic ideal of $X$
a characteristic power series of $X$, and denote it by ${\rm char}_{\Lambda}(X)\in \Lambda$,
which is determined up to $\Lam^\x$.

\subsection{Main theorems of ``{\it Split case}''}

Let $\chi \colon \Gal(K/k) \to \overline{\Q}_p^\x$ be an odd character.
Assume that there is only one prime ideal $\p$ in $k$ above $p$ which satisfies that $\chi(\p)=1$.
Let $\tilde{K}'/K$ be the maximal multiple $\Zp$-extension such that $\Ki \subset \tilde{K}'$ and $\tilde{K}'/\Ki$ is unramified.
We know that $\Gal(\tilde{K}'/K)$ is a $\Gal(K/k)$-module.
Note that if $k=\Q$, then $\tilde{K}=\tilde{K}'$.
By \cite{DDP} (as a consequence of Lemma 1.5 and Proposition 1.6), 
we see that $\Gal(\tilde{K}'/K)^\chi \isom \mathcal{O}_\chi$.
Let $\tilde{K}_\chi$ be the subextension of $\tilde{K}/\Ki$ such that 
$\Gal(\tilde{K}_\chi/\Ki)$ is isomorphic to $\Gal(\tilde{K}'/K)^\chi$.
We identify $\Zp[[\Gal(\tilde{K}_\chi/K)]]=\Zp[[\Gal(\Ki/K)\x \Gal(\tilde{K}_\chi/\Ki)]]$ with $\Zp[[S,T_1,\ldots,T_{d_\chi}]]$ in a similar way as we did $\Zp[[\Gal(\Ki/K)]]$ with $\Zp[[S]]$, where $d_\chi:=[\mathcal{O}_\chi:\Zp]$.

\begin{thm}\label{thm 1}
Let $p$, $k$, $K$ be as in \S {\rm \ref{notation}}, $\chi \colon \Gal(K/k) \to \overline{\Q}_p^\x$ an odd character.
Assume that 
$\mu_p \not\subset K$
and that 
there exists just only one prime ideal $\p$ in $k$ above $p$ which satisfies that $\chi(\p)=1$.
Then $\Gal(K/k)$-module 
$
(X_{\tilde{K}_\chi})_{\Gal(\tilde{K}_\chi/K)}
=
X_{\tilde{K}_\chi}/(S,T_1,\cdots,T_{d_\chi})
$
satisfies
$$
\# \left( X_{\tilde{K}_\chi}/(S,T_1,\cdots,T_{d_\chi}) \right)^\chi
=
\# \mathcal{O}_\chi/f_\chi^*,
$$
where $f_\chi^*$ is the first non-vanishing coefficient of a characteristic power series of the $\mathcal{O}_\chi[[S]]$-module $X_{\Ki}^\chi$.
Moreover, assume that Leopoldt's conjecture holds for the maximal totally real subfield $K^+$ and $p$, then 
there is a canonical isomorphism 
$$
\left((X_{\tilde{K}'})_{\Gal(\tilde{K}'/\Ki)}\right)^\chi
\isom
\left((X_{\tilde{K}_\chi})_{\Gal(\tilde{K}_\chi/\Ki)}\right)^\chi .
$$
\end{thm}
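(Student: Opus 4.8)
The plan is to compute $\bigl(X_{\tilde K_\chi}/(S,T_1,\dots,T_{d_\chi})\bigr)^\chi$ by two successive descents: first quotient out the ``unramified'' directions $\mathcal G:=\Gal(\tilde K_\chi/\Ki)$ (the $T_i$'s), then the cyclotomic direction $\Gamma:=\Gal(\Ki/K)$ (the variable $S$), thereby reducing everything to the one-variable module $X_{\Ki}^\chi$ over $\mathcal O_\chi[[S]]$.

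For the first descent, note that $\tilde K_\chi/\Ki$ is unramified, so $L_{\tilde K_\chi}/\Ki$ is an unramified Galois $p$-extension whose maximal abelian subextension over $\Ki$ is $L_{\Ki}$; hence $\Gal(L_{\tilde K_\chi}/\Ki)^{\rm ab}\isom X_{\Ki}$. Feeding $1\to X_{\tilde K_\chi}\to \Gal(L_{\tilde K_\chi}/\Ki)\to\mathcal G\to 1$ into the five-term homology sequence yields an exact sequence $H_2(\mathcal G,\Zp)\to (X_{\tilde K_\chi})_{\mathcal G}\to X_{\Ki}\to \mathcal G\to 0$. The key observation is that the obstruction disappears after taking $\chi$-parts: since $\mathcal G\isom \mathcal O_\chi$ with $\Gal(K/k)$ acting through $\chi$, the group $H_2(\mathcal G,\Zp)\isom \bigwedge^2_{\Zp}\mathcal G$ carries the action $\chi^2$, and $\chi^2\ne \chi$ because $\chi$ is odd, so $H_2(\mathcal G,\Zp)^\chi=0$. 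Therefore $(X_{\tilde K_\chi})_{\mathcal G}^\chi\isom Y$, where $Y:=\Ker\bigl(X_{\Ki}^\chi\twoheadrightarrow \mathcal G^\chi=\mathcal O_\chi\bigr)$ and the surjection is the one attached to the unramified extension $\tilde K_\chi/\Ki$.

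For the second descent, $\Gal(\tilde K_\chi/K)$ is abelian, so $\Gamma$ acts trivially on $\mathcal G$; thus $S$ kills $\mathcal O_\chi=\mathcal G^\chi$, i.e.\ $\mathcal O_\chi\isom\mathcal O_\chi[[S]]/(S)$, and the sequence $0\to Y\to X_{\Ki}^\chi\to \mathcal O_\chi\to 0$ gives ${\rm char}(Y)=g_\chi/S$ with $g_\chi:={\rm char}(X_{\Ki}^\chi)$. Since the sought module is $(Y)_\Gamma=Y/SY$, the $\Gamma$-Euler-characteristic formula computes $\#(Y/SY)=\#\mathcal O_\chi/\bigl((g_\chi/S)(0)\bigr)$ as soon as $Y/SY$ is finite and $Y$ has no nonzero finite submodule; the latter holds because $\mu_p\not\subset K$ forces $X_{\Ki}^\chi$ (hence its submodule $Y$) to have no nonzero finite submodule. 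When $\ord_S g_\chi=1$ the coefficient $(g_\chi/S)(0)$ is precisely the first non-vanishing coefficient $f_\chi^*$, which delivers the asserted order $\#\mathcal O_\chi/f_\chi^*$.

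The main obstacle is exactly the finiteness of $Y/SY$, that is, the equality $\ord_S g_\chi=1$. The surjection $X_{\Ki}^\chi\twoheadrightarrow\mathcal O_\chi$ forces $\ord_S g_\chi\ge 1$, but the reverse bound is arithmetic and is where the hypothesis of a unique prime $\p$ above $p$ with $\chi(\p)=1$ enters: this is the rank-one setting of \cite{DDP}, and I would use their Lemma 1.5 and Proposition 1.6 to control the $S$-primary part of $X_{\Ki}^\chi$ at $S=0$, upgrading the coinvariant statement $\Gal(\tilde K'/K)^\chi\isom\mathcal O_\chi$ to the sharper local statement $\ord_S g_\chi=1$ (equivalently, the nonvanishing of the leading/Gross-type term). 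For the final ``Moreover'' isomorphism I would run the identical five-term argument with $\tilde K'$ and $\mathcal G':=\Gal(\tilde K'/\Ki)$ in place of $\tilde K_\chi$ and $\mathcal G$; since $(\mathcal G')^\chi=\mathcal O_\chi=\mathcal G^\chi$, the kernel $\Ker(X_{\Ki}^\chi\to (\mathcal G')^\chi)$ is again $Y$, and it remains only to show $H_2(\mathcal G',\Zp)^\chi=0$, after which both coinvariants are canonically $\isom Y$ via the maps induced by $\tilde K_\chi\subset \tilde K'$. Now $\mathcal G'$ carries several $\Gal(K/k)$-isotypic pieces, so $\bigl(\bigwedge^2\mathcal G'\bigr)^\chi$ need not vanish formally; here I expect Leopoldt's conjecture for $K^+$ to be used to determine the isotypic $\Zp$-ranks of $\Gal(\tilde K'/\Ki)$ precisely enough to exclude any pair $\psi,\psi'$ of characters occurring in $\mathcal G'$ with $\psi\psi'=\chi$, which makes the $\chi$-part of $\bigwedge^2\mathcal G'$ vanish and completes the proof.
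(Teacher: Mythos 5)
Your proposal is correct and its skeleton is exactly the paper's: the five-term homology descent is the paper's Lemma \ref{Ozaki01 ex seq (sp case)}, including the parity argument killing $H_2(\Gal(\tilde{K}_\chi/\Ki),\Zp)^\chi$ (the paper phrases this as complex conjugation acting trivially on $\mathcal{O}_\chi \wedge_{\Zp} \mathcal{O}_\chi$; strictly, the wedge carries products of pairs of Galois conjugates of $\chi$ rather than just $\chi^2$, but since all conjugates are odd these products are even, so your parity point is the right one and works verbatim). Where you genuinely diverge is the counting step: you evaluate $\#(Y/SY)$ directly from the multiplicativity ${\rm char}(Y)={\rm char}(X_{\Ki}^\chi)/S$ together with the Euler-characteristic formula, using $Y[S]=0$ (no nonzero finite submodules, via $\mu_p \not\subset K$), whereas the paper develops bespoke snake-lemma machinery (Proposition \ref{ker-im 1} and Corollaries \ref{ker-im 2}, \ref{X/X[S]+SX and lead. coeff.}) to compare $\#\,Y^\chi/SY^\chi$ with $\#\left(X_{\Ki}/X_{\Ki}[S]+SX_{\Ki}\right)^\chi$, quoting Kurihara for the finiteness of the latter and the torsion-freeness of $\left((X_{\Ki}[S]+SX_{\Ki})/SX_{\Ki}\right)^\chi$. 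Both routes rest on the same arithmetic input, the rank-one Gross order-of-vanishing statement $\ord_S {\rm char}(X_{\Ki}^\chi)=1$ (the paper invokes Gross's argument and Kurihara where you invoke \cite{DDP}), so your version is a legitimate and somewhat more standard shortcut; what it gives up is the reusable module-theoretic lemmas, which the paper exploits again later (e.g.\ in \S\ref{Proof of Thm 3}). For the ``Moreover'' part your plan is aimed correctly but you left the decisive step as an expectation; the paper's actual argument is a two-line parity trick that your setup delivers immediately: under Leopoldt for $K^+$, the plus part $\Gal(\tilde{K}'/\Ki)^+$ must vanish (it is torsion-free, so a nonzero plus part would yield a $\Zp$-extension of $K^+$ other than the cyclotomic one), hence every character occurring in $\Gal(\tilde{K}'/\Ki)$ is odd, every product $\psi\psi'$ is even, and the $\chi$-part of the wedge square vanishes for the odd $\chi$ --- no computation of individual isotypic $\Zp$-ranks is needed --- after which the canonical isomorphism follows as you say, since $\Gal(\tilde{K}'/\Ki)^\chi=\Gal(\tilde{K}_\chi/\Ki)$ identifies the two kernels of the maps to the Galois groups.
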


\begin{rem}
\begin{rm}
Since Gross's `order of vanishing conjecture' (see Gross \cite[Conjecture 1.15]{Gross82} or Dasgupta, Kakde, and Ventullo \cite[Conjecture 1]{DKV18}, for example) of rank one for $K/k$ holds by the argument in \cite[Proposition 2.13]{Gross82},
$f_\chi^*$ turns out to be the coefficient of degree $1$.
\end{rm}
\end{rem}

\begin{cor}\label{Cor of them 1}
Let $p$ be an odd prime number, $K$ an imaginary abelian finite extension over $\Q$ of degree coprime to $p$ in which $p$ splits completely.
Then, for any odd character $\chi$ of $\Gal(K/k)$, we have
$$
\# \left((X_{\tilde{K}})_{\Gal(\tilde{K}/K)} \right)^\chi
=
\# \mathcal{O}_\chi/f_\chi^*.
$$
\end{cor}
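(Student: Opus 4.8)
The plan is to deduce the corollary from Theorem~\ref{thm 1} by taking $k=\Q$ and checking that its hypotheses hold for \emph{every} odd character $\chi$. Since $K$ is imaginary abelian over $\Q$ it is a CM-field, and with $k=\Q$ the extension $K/k$ is finite abelian of degree coprime to $p$, as in \S\ref{notation}. First I would observe that $p$ splits completely in $K$, hence is unramified there; if $\mu_p\subset K$ then $\Q(\mu_p)\subset K$ and $p$ would ramify in $K$, a contradiction, so $\mu_p\not\subset K$. Next, $k=\Q$ has the single prime $(p)$ above $p$, so the splitting hypothesis amounts to $\chi((p))=1$; as $p$ splits completely in $K/\Q$ its Frobenius in $\Gal(K/\Q)$ is trivial, whence $\chi((p))=1$. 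Thus $(p)$ is the unique prime $\p$ of $k$ above $p$ with $\chi(\p)=1$, and Theorem~\ref{thm 1} applies to all odd $\chi$. I would also note that $K^+$, being abelian over $\Q$, satisfies Leopoldt's conjecture at $p$ (Brumer), so the second assertion of Theorem~\ref{thm 1} is available, and that $k=\Q$ forces $\tilde{K}'=\tilde{K}$.

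It then remains to identify $\bigl((X_{\tilde{K}})_{\Gal(\tilde{K}/K)}\bigr)^\chi$ with the module $\bigl(X_{\tilde{K}_\chi}/(S,T_1,\cdots,T_{d_\chi})\bigr)^\chi$ whose order is computed in Theorem~\ref{thm 1}. The formal input is transitivity of coinvariants along $1\to\Gal(\tilde{K}/\Ki)\to\Gal(\tilde{K}/K)\to\Gal(\Ki/K)\to 1$, giving $(X_{\tilde{K}})_{\Gal(\tilde{K}/K)}=\bigl((X_{\tilde{K}})_{\Gal(\tilde{K}/\Ki)}\bigr)_{\Gal(\Ki/K)}$. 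Because $\Gal(K/\Q)$ acts trivially on $\Gal(\Ki/K)$, the variable $S$ is $\Gal(K/\Q)$-equivariant and commutes with the $\chi$-projector, so taking $\chi$-parts commutes with $\Gal(\Ki/K)$-coinvariants; this gives $\bigl((X_{\tilde{K}})_{\Gal(\tilde{K}/K)}\bigr)^\chi\isom\Bigl(\bigl((X_{\tilde{K}})_{\Gal(\tilde{K}/\Ki)}\bigr)^\chi\Bigr)_{\Gal(\Ki/K)}$.

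Finally I would invoke the canonical isomorphism of the second part of Theorem~\ref{thm 1}, which (using $\tilde{K}'=\tilde{K}$) reads $\bigl((X_{\tilde{K}})_{\Gal(\tilde{K}/\Ki)}\bigr)^\chi\isom\bigl((X_{\tilde{K}_\chi})_{\Gal(\tilde{K}_\chi/\Ki)}\bigr)^\chi$. Applying $\Gal(\Ki/K)$-coinvariants and transitivity once more on the right yields $\bigl((X_{\tilde{K}})_{\Gal(\tilde{K}/K)}\bigr)^\chi\isom\bigl((X_{\tilde{K}_\chi})_{\Gal(\tilde{K}_\chi/K)}\bigr)^\chi=\bigl(X_{\tilde{K}_\chi}/(S,T_1,\cdots,T_{d_\chi})\bigr)^\chi$, whose order is $\#\mathcal{O}_\chi/f_\chi^*$ by the first part of Theorem~\ref{thm 1}. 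The step I expect to require the most care is the $\Gal(\Ki/K)$-equivariance of that canonical isomorphism: one must check that it respects the residual cyclotomic ($S$-)action so that it survives the further quotient by $S$. Should this equivariance not be transparent, the fallback is to argue directly that the $\chi$-part of the full coinvariant only detects the $\tilde{K}_\chi$-direction---the remaining $\Zp$-directions in $\Gal(\tilde{K}/\Ki)$ being killed by the coinvariants---and then apply the first part of Theorem~\ref{thm 1} directly.
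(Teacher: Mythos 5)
Your proposal is correct and takes essentially the same route as the paper, which presents the corollary as an immediate consequence of Theorem \ref{thm 1}: take $k=\Q$ (so $\tilde{K}'=\tilde{K}$), check $\mu_p\not\subset K$ and $\chi((p))=1$ from complete splitting, invoke Brumer's theorem for Leopoldt's conjecture on $K^+$, and pass from $\Gal(\tilde{K}/\Ki)$-coinvariants to $\Gal(\tilde{K}/K)$-coinvariants via the canonical isomorphism of the second part of Theorem \ref{thm 1}. The $S$-equivariance you rightly flag as the delicate point does hold, since the isomorphism of Lemma \ref{Ozaki01 ex seq (sp case)} (ii) is built from Hochschild--Serre sequences of $\Gal(\Ki/k)$-modules, so it is compatible with the residual $\Gal(\Ki/K)$-action.
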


In \S \ref{Proof of Thm 2}, we will consider the case where $K$ is an imaginary quadratic field such that $p$ splits completely as $(p)=\P \overline{\P}$ and connect the above corollary with the Galois group $\mathfrak{X}_{\P}(\tilde{K})$ of the maximal abelian $p$-extension of $\tilde{K}$ which is unramified outside all primes above  $\P$.
It is known that $\mathfrak{X}_{\P}(\tilde{K})$ 
is finitely generated and torsion over $\Zp[[S,T]]=\Zp[[\Gal(\tilde{K}/K)]]$ $(T:=T_1)$
by \cite[Theorem 5.3 (ii)]{Rubin91}.
Therefore, we can consider the characteristic ideal 
of $\mathfrak{X}_{\P}(\tilde{K})$ as a $\Zp[[S,T]]$-module, 
which plays an important role in the Iwasawa main conjecture.
We denote by $\lam$ the Iwasawa $\lam$-invariant of $\Ki/K$.
Then it is known that $\mathfrak{X}_{\P}(\tilde{K})$ is generated by $\lam -1$ elements as a $\Zp[[T]]$-module.
Moreover, let $I_{\lam-1}$ be the identity matrix of size $\lam-1$ and $A$ a matrix associated to multiplication by $S$ on $\mathfrak{X}_{\P}(\tilde{K})$ whose entries are in $\Zp[[T]]$.
We will show the following theorem which suggests that the characteristic ideal of $\mathfrak{X}_{\P}(\tilde{K})$ relates to the structure of $X_{\tilde{K}}$.

\begin{thm}\label{thm 2}
Let $p$ be an odd prime number, $K$ an imaginary quadratic field such that $p$ splits completely as $(p)=\P \overline{\P}$ and $\tilde{K}$ the unique $\Zp^2$-extension of $K$.
Assume that 
$L_K \subset \tilde{K}$
or that
the characteristic ideal 
of $X_{\Ki}$ 
does not have any square factor.
Then the characteristic power series
${\rm char}_{\Zp[[S,T]]}(\mathfrak{X}_{\P}(\tilde{K}))$
of $\mathfrak{X}_{\P}(\tilde{K})$
satisfies
\begin{eqnarray*}
&
\left({\rm char}_{\Zp[[S,T]]}(\mathfrak{X}_{\P}(\tilde{K}))\right)\Zp[[S,T]] 
= \left( \det (S \cdot I_{\lam-1}-A)\right)\Zp[[S,T]],
&
\\
&
\left({\rm char}_{\Zp[[S,T]]}(\mathfrak{X}_{\P}(\tilde{K}))\mid_{T=0}\right)\Zp[[S]] 
= \left( \f{{\rm char}_{\Zp[[S]]}(X_{\Ki})}{S}\right)\Zp[[S]]
&
\end{eqnarray*}
as ideals.
In particular, combining these with Corollary \ref{Cor of them 1}, we have
$$
\# \left((X_{\tilde{K}})_{\Gal(\tilde{K}/K)} \right)=
\# \Zp/\left({\rm char}_{\Zp[[S,T]]}(\mathfrak{X}_{\P}(\tilde{K}))\mid_{S=T=0}\right).
$$
\end{thm}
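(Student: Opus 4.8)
The plan is to reduce the first displayed equality to a freeness statement over the one-variable algebra $R:=\Zp[[T]]$, read off the determinant formula from the resulting square presentation, and then obtain the second equality by specializing at $T=0$ and comparing the fibre with the cyclotomic module $X_{\Ki}$. Throughout I abbreviate $\mathfrak{X}:=\mathfrak{X}_{\P}(\tilde{K})$. By Rubin's theorem $\mathfrak{X}$ is finitely generated and torsion over $\Zp[[S,T]]$, and it is generated by $\lam-1$ elements over $R$.

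The heart of the first assertion is to upgrade this last fact to: $\mathfrak{X}$ is \emph{free} over $R$, necessarily of rank $\lam-1$ (its $R$-rank being $\lam-1$, as one reads off from the $T=0$ fibre below). Since $R$ is a two-dimensional regular local ring, by the Auslander--Buchsbaum formula it suffices to show ${\rm depth}_R\,\mathfrak{X}\ge 2$, that is, that $\mathfrak{X}$ is $R$-torsion-free and has no nonzero finite $R$-submodule; then ${\rm pd}_R\,\mathfrak{X}=0$ and $\mathfrak{X}$ is $R$-free, whence its rank equals its minimal number of generators $\lam-1$. This is exactly where the hypotheses enter: I expect that either $L_K\subset\tilde{K}$ or the absence of a square factor in ${\rm char}_{\Zp[[S]]}(X_{\Ki})$ is precisely what rules out a nonzero finite (equivalently pseudo-null) $R$-submodule of $\mathfrak{X}$, and verifying this is the main obstacle.

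Granting freeness, multiplication by $S$ on the free $R$-module $\mathfrak{X}$ is given by the matrix $A\in M_{\lam-1}(R)$ of the statement, yielding the exact sequence of $\Zp[[S,T]]$-modules
\[
0\longrightarrow \Zp[[S,T]]^{\lam-1}\xrightarrow{\ S\cdot I_{\lam-1}-A\ }\Zp[[S,T]]^{\lam-1}\longrightarrow \mathfrak{X}\longrightarrow 0,
\]
where injectivity of $S\cdot I_{\lam-1}-A$ follows from $\det(S\cdot I_{\lam-1}-A)\neq 0$, itself a consequence of $\mathfrak{X}$ being $\Zp[[S,T]]$-torsion. A module with such a square presentation has characteristic ideal generated by the determinant of its presentation matrix, so ${\rm char}_{\Zp[[S,T]]}(\mathfrak{X})=(\det(S\cdot I_{\lam-1}-A))$, which is the first displayed equality.

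For the second identity I would specialize at $T=0$. Reducing the presentation modulo $T$ gives $\det(S\cdot I_{\lam-1}-A)\mid_{T=0}=\det(S\cdot I_{\lam-1}-A(0))$, the characteristic polynomial of $A(0):=A\bmod T\in M_{\lam-1}(\Zp)$, which represents multiplication by $S$ on $\mathfrak{X}/T$. A control argument then identifies $\mathfrak{X}/T$, up to a finite module, with the cyclotomic Iwasawa module allowing ramification at $\P$; comparing the latter with $X_{\Ki}$ through the inertia sequence at the prime above $\P$ isolates a rank-one local contribution which, because $p$ splits and $\chi(\P)=1$, forces exactly the factor $S$ (the trivial zero already visible in Theorem \ref{thm 1} and Corollary \ref{Cor of them 1}). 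Since over $\Zp[[S]]$ pseudo-null means finite and the characteristic ideal is insensitive to finite modules, these finite discrepancies are harmless, giving $\det(S\cdot I_{\lam-1}-A(0))\Zp[[S]]=({\rm char}_{\Zp[[S]]}(X_{\Ki})/S)\Zp[[S]]$, the second displayed equality. Setting $S=0$ then yields ${\rm char}_{\Zp[[S,T]]}(\mathfrak{X})\mid_{S=T=0}=f_\chi^*$, and combining with Corollary \ref{Cor of them 1} produces the stated order formula for $(X_{\tilde{K}})_{\Gal(\tilde{K}/K)}$. I expect the control-and-comparison bookkeeping at $T=0$, in particular pinning down the single factor $S$, to be the second technical point, after the freeness.
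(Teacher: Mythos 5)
Your argument for the first displayed equality rests entirely on the claim that $\mathfrak{X}:=\mathfrak{X}_{\P}(\tilde{K})$ is \emph{free} over $R:=\Zp[[T]]$, and the criterion you propose for proving this is false. Over the two-dimensional regular local ring $R$, the conditions you invoke ($R$-torsion-freeness and absence of finite $R$-submodules --- the second is already implied by the first) give only ${\rm depth}_R\,\mathfrak{X}\ge 1$, not $\ge 2$: the maximal ideal $(p,T)\subset R$ is torsion-free with no finite submodules, yet it has depth $1$ and is not free. Nor can freeness be extracted from the abstract package that is actually available (torsion over $\Zp[[S,T]]$, no pseudo-null submodules, finitely generated over $R$, even square-free characteristic ideal): inside $E=\Zp[[S,T]]/(S)\op\Zp[[S,T]]/(S-p)\isom R\op R$ the submodule $M=\{(a,b):a\equiv b \bmod (p,T)\}$ has all of these properties with ${\rm char}(M)=(S(S-p))$ square-free, yet $M\isom R\op (p,T)$ is not $R$-free, and for a minimal $R$-generating set of $M$ one computes $\det(S\cdot I-A)=S^{2}(S-p)\neq {\rm char}(M)$. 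So both your freeness claim and the passage from a generating set to ``char $=$ det'' need a further input that your proposal never identifies, namely the equality of the number of $R$-generators with the $R$-rank; in the paper this numerical coincidence comes from Fujii's isomorphism $\mathfrak{X}/T\isom X_{\tilde{K}}/T$ (see (\ref{mathfrak{X}/T isom tilde{X}/T})) together with the fact, coming from Ozaki's sequence (\ref{Ozaki01 ex seq (imag. quad. sp case)}), that $X_{\tilde{K}}/T$ is $\Zp$-free of rank $\lam-1$ --- none of which appears in your sketch. Your guess about where the hypotheses enter is also off target: $R$-torsion-freeness of $\mathfrak{X}$ and $\mathfrak{X}[T]=0$ are unconditional consequences of Fujii's no-pseudo-null-submodule theorem combined with Perrin-Riou's specialization lemma, so the hypotheses cannot be ``precisely'' what rules out such submodules.

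The paper's proof needs no freeness and runs in the opposite logical order to yours. It first proves your \emph{second} identity, unconditionally: Fujii gives $\mathfrak{X}/T\isom X_{\tilde{K}}/T$ and $\mathfrak{X}[T]=0$; Ozaki's sequence $0\to X_{\tilde{K}}/T\to X_{\Ki}\to\Zp\to 0$ gives ${\rm char}_{\Zp[[S]]}(X_{\tilde{K}}/T)=F(S)/S=F^{*}(S)$; and Perrin-Riou's lemma then yields $\bigl({\rm char}_{\Zp[[S,T]]}(\mathfrak{X})\mid_{T=0}\bigr)=(F^{*}(S))$. For the \emph{first} identity, Cayley--Hamilton shows $f(S,T)=\det(S\cdot I_{\lam-1}-A)$ annihilates $\mathfrak{X}$; the theorem's hypotheses are used exactly to prove ${\rm Ann}(\mathfrak{X})=({\rm char}(\mathfrak{X}))$ --- via Lemma \ref{ann and char} (ii) in the square-free case, and via cyclicity of $\mathfrak{X}$ when $L_K\subset\tilde{K}$ --- so that ${\rm char}(\mathfrak{X})$ divides $f$; finally the degree count $\deg_S f(S,0)=\lam-1=\deg F^{*}(S)$, which uses the already-established second identity, forces the cofactor to be a unit. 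Since your plan derives the second identity \emph{from} the first by specialization, and the first is exactly what is not established, the proposal does not go through as written; moreover the ``control-and-comparison bookkeeping'' you anticipate at $T=0$ is replaced, much more simply, by Fujii's isomorphism plus Ozaki's sequence.
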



\subsection{Main theorems of ``{\it Non-split case}''}
In the latter half of this paper, we consider the case where $K$ is an imaginary quadratic 
field
such that $p$ does not split.
Then it is well known that the number of generators of $X_{\Ki}$ as a $\Zp[[S]]$-module is equal to $\dim_{\Fp} (A_K/p)$.
Our target is the number of generators of $X_{\tilde{K}}$ as a $\Zp[[S,T]]$-module,
which is equal to $\dim_{\Fp} (X_{\tilde{K}}/(p,S,T))$ by Nakayama's lemma.
It is easy to see that
$$
\dim_{\Fp} (A_K/p)-1 \le \dim_{\Fp} (X_{\tilde{K}}/(p,S,T)) \le \dim_{\Fp} (A_K/p)+1
$$
(see \S \ref{a system of generators}).
In \S \ref{a system of generators}, we describe a system of generators of $X_{\tilde{K}}$ and show the following theorem which classifies $\dim_{\Fp} (X_{\tilde{K}}/(p,S,T))$ in the case where $A_K$ is a cyclic abelian group.

\begin{thm}\label{thm 3}
\makeatletter
  \parsep   = 0pt
  \labelsep = .5pt
  \def\@listi{%
     \leftmargin = 20pt \rightmargin = 0pt
     \labelwidth\leftmargin \advance\labelwidth-\labelsep
     \topsep     = 0\baselineskip
     \partopsep  = 0pt \itemsep       = 0pt
     \itemindent = 0pt \listparindent = 10pt}
  \let\@listI\@listi
  \@listi
  \def\@listii{%
     \leftmargin = 20pt \rightmargin = 0pt
     \labelwidth\leftmargin \advance\labelwidth-\labelsep
     \topsep     = 0pt \partopsep     = 0pt \itemsep   = 0pt
     \itemindent = 0pt \listparindent = 10pt}
  \let\@listiii\@listii
  \let\@listiv\@listii
  \let\@listv\@listii
  \let\@listvi\@listii
  \makeatother
Let $p$ be an odd prime number and $K$ an imaginary quadratic field such that $p$ does not split.
\begin{itemize}
\item[{\rm (i)}]
{\rm (trivial case)}
Assume that $L_K \cap \tilde{K}=K$, then 
$
\dim_{\Fp} (X_{\tilde{K}}/(p,S,T))=\dim_{\Fp} (A_K/p).
$
\item[{\rm (ii)}]
Suppose that $L_K \cap \tilde{K} \neq K$, and that $\dim_{\Fp} (A_K/p)=1$.
\begin{itemize}
\item[{\rm (ii-a)}]
If $\lam=1$, then 
$
\dim_{\Fp} (X_{\tilde{K}}/(p,S,T))=1.
$
\item[{\rm (ii-b)}]
If $\lam \ge 2$, then
$$
\dim_{\Fp} (X_{\tilde{K}}/(p,S,T))=
\begin{cases}
1\ \ \text{if $L_K \subset \tilde{K}$},
\\
2\ \ \text{otherwise}.
\end{cases}
$$
\end{itemize}
\end{itemize}
\end{thm}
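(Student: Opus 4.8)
The plan is to compute $\dim_{\Fp}(X_{\tilde{K}}/(p,S,T)) = \dim_{\Fp}\big((X_{\tilde{K}})_{\Gamma}/p\big)$, where $\Gamma := \Gal(\tilde{K}/K) \cong \Zp^2$, by reducing it to a genus-theoretic exact sequence together with the analysis of one ``commutator'' element. Let $H$ be the maximal subextension of $L_{\tilde{K}}/\tilde{K}$ that is abelian over $K$, and let $\tilde{\gamma}_1,\tilde{\gamma}_2 \in \Gal(L_{\tilde{K}}/K)$ be lifts of topological generators of $\Gamma$. The class $\bar{w} \in (X_{\tilde{K}})_{\Gamma}$ of the commutator $[\tilde{\gamma}_1,\tilde{\gamma}_2]$ is independent of the lifts, and $\Gal(H/\tilde{K}) = (X_{\tilde{K}})_{\Gamma}/\langle \bar{w}\rangle$. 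Writing $I = I_{\p}(\tilde{K}/K) \subseteq \Gamma$ for the inertia group of the unique prime $\p$ above $p$, one has $L_K \cap \tilde{K} = \tilde{K}^{I}$ and $\Gamma/I \cong \Gal((L_K\cap\tilde{K})/K)$; comparing the maximal unramified subextension of $H/K$ with $L_K$ then yields the exact sequence
\[
0 \longto (X_{\tilde{K}})_{\Gamma}/\langle \bar{w}\rangle \longto A_K \longto \Gamma/I \longto 0 .
\]
Consequently $\dim_{\Fp}(X_{\tilde{K}}/(p,S,T)) = \dim_{\Fp}(A_K'/p) + \ep$, where $A_K' := (X_{\tilde{K}})_{\Gamma}/\langle \bar{w}\rangle \inj A_K$ and $\ep \in \{0,1\}$ equals $1$ exactly when $\bar{w} \notin p\,(X_{\tilde{K}})_{\Gamma}$. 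This recovers the bounds stated before the theorem and splits the problem into determining $A_K'$ (which is purely a matter of how $\p$ ramifies) and deciding whether $\bar{w}$ contributes an extra generator.

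Two cases fall out immediately. First, $L_K \cap \tilde{K} = K$ is equivalent to $I = \Gamma$, i.e.\ $\p$ is totally ramified in $\tilde{K}/K$; then the inertia subgroup of a prime of $L_{\tilde{K}}$ above $\p$ maps isomorphically onto $\Gamma$ (since $L_{\tilde{K}}/\tilde{K}$ is unramified), so it is abelian and one may take $\tilde{\gamma}_1,\tilde{\gamma}_2$ inside it, forcing $\bar{w}=0$. Hence $(X_{\tilde{K}})_{\Gamma} \cong A_K$ and $\dim_{\Fp}(X_{\tilde{K}}/(p,S,T)) = \dim_{\Fp}(A_K/p)$, which proves (i) for arbitrary $A_K$. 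Second, if $L_K \subseteq \tilde{K}$ then $\Gamma/I \cong A_K$, so the exact sequence forces $A_K'=0$ and $(X_{\tilde{K}})_{\Gamma} = \langle \bar{w}\rangle$ is cyclic; since $A_K \ne 0$ in case (ii) we have $X_{\tilde{K}}\neq 0$, and Nakayama gives $\dim_{\Fp}(X_{\tilde{K}}/(p,S,T)) = 1$. This settles the subcases of (ii-a) and (ii-b) with $L_K \subseteq \tilde{K}$.

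What remains is case (ii) with $K \subsetneq L_K \cap \tilde{K} \subsetneq L_K$, i.e.\ $A_K \cong \Z/p^{a}$ and $\Gamma/I \cong \Z/p^{b}$ with $1 \le b < a$; here $A_K' \cong \Z/p^{a-b} \ne 0$, so $\dim_{\Fp}(X_{\tilde{K}}/(p,S,T)) = 1 + \ep$ and everything comes down to computing $\ep$. The plan is to decide $\ep$ by comparing $X_{\tilde{K}}$ with the cyclotomic module along the anticyclotomic direction. Write $\Gamma = \Gamma^{+}\x\Gamma^{-}$ with $\Gamma^{+} = \Gal(\Ki/K)$ (variable $S$) and $\Gamma^{-}$ the anticyclotomic part (variable $T$). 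Since $\dim_{\Fp}(A_K/p)=1$ and $\mu=0$ (Ferrero--Washington), $X_{\Ki} \cong \Zp[[S]]/(g)$ is cyclic with $g$ distinguished of degree $\lam$ and $A_K \cong \Zp/(g(0))$. A control argument for the $\Zp$-extension $\tilde{K}/\Ki$, in which $\p$ has anticyclotomic inertia of index $p^{b}$, should relate $\bar{w}$ to the linear coefficient of $g$, so that $\bar{w} \in p\,(X_{\tilde{K}})_{\Gamma}$ precisely when that coefficient is a unit. As $g$ is distinguished of degree $\lam$, its linear coefficient $g'(0)$ is a unit iff $\lam=1$ (paralleling the fact that $f_\chi^*$ is the degree-one coefficient in Theorem \ref{thm 1}). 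Thus $\ep=0$ when $\lam=1$, giving $\dim=1$ as in (ii-a), and $\ep=1$ when $\lam \ge 2$, giving $\dim=2$ as in (ii-b) for $L_K \not\subseteq \tilde{K}$.

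The main obstacle is the final step: making the relation between $\bar{w}$ and $g'(0)$ precise. This requires an explicit control/comparison theorem along $\tilde{K}/\Ki$ that tracks the local contribution of $\p$ when $\p$ is not totally ramified in the anticyclotomic direction (inertia index $p^{b}$ with $b\ge 1$), and that handles the possibility that $I$ sits ``diagonally'' in $\Gamma^{+}\x\Gamma^{-}$ rather than as a direct factor. Concretely, I would realize $\bar{w}$ through the explicit system of generators of $X_{\tilde{K}}$ constructed in \S\ref{a system of generators}, transport the single defining relation of $X_{\Ki}\cong\Zp[[S]]/(g)$ into the two-variable module, and reduce it modulo $(p,S,T)$: the coefficient of the resulting relation is governed by $g$ near $S=0$, and the unit/non-unit dichotomy of its linear term yields $\ep$. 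Checking that no further relations interfere, and that the comparison is insensitive to the choice of generating set, is the delicate point.
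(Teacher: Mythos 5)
Your genus-theoretic framework is correct, and where it is carried out it gives a genuinely different argument from the paper's. The well-definedness of the commutator class $\bar{w}\in(X_{\tilde{K}})_{\Gamma}$, the identity $\Gal(H/\tilde{K})=(X_{\tilde{K}})_{\Gamma}/\langle\bar{w}\rangle$, and the exact sequence $0\to(X_{\tilde{K}})_{\Gamma}/\langle\bar{w}\rangle\to A_K\to\Gamma/I\to0$ all hold, and your proof of (i) --- lifting both generators of $\Gamma$ into an inertia group $\mathcal{I}\isom\Gamma$ of $\Gal(L_{\tilde{K}}/K)$, forcing $\bar{w}=0$ --- is complete and more conceptual than the paper's route, which instead deduces $X_{\tilde{K}}/T\isom X_{\Ki}$ from Ozaki's exact sequence (Lemma \ref{Ozaki01 ex seq (non-sp case)} combined with Lemma \ref{an isom1}) and reduces modulo $(p,S)$. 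Your reduction of case (ii) to the single invariant $\ep$ (whether $\bar{w}\notin p(X_{\tilde{K}})_{\Gamma}$) is also valid.

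There are, however, two genuine gaps. The smaller one: in the subcase $L_K\subset\tilde{K}$ you assert that $A_K\neq0$ implies $X_{\tilde{K}}\neq0$. This is true but does not follow from your exact sequence, which is perfectly consistent with $X_{\tilde{K}}=0$ when $\Gamma/I\isom A_K$; one needs an outside input such as Lemma \ref{Ozaki01 ex seq (non-sp case)}, which embeds $X_{\tilde{K}}/T$ into $X_{\Ki}$ with finite cokernel, together with the fact that $X_{\Ki}$ is infinite (it surjects onto $A_K\neq0$ and has no nontrivial finite $\Zp[[S]]$-submodules). The decisive gap: all of (ii-a)/(ii-b) outside $L_K\subset\tilde{K}$ rests on the claim that $\bar{w}\in p(X_{\tilde{K}})_{\Gamma}$ if and only if $g'(0)$ is a unit, which you do not prove (``should relate''); given your reduction, this claim is \emph{equivalent} to the remaining assertions of the theorem, so as written the proposal reformulates the problem rather than solving it. The missing ingredient is exactly what the paper supplies: by Lemmas \ref{Ozaki01 ex seq (non-sp case)}, \ref{an isom1} and Proposition \ref{key3}, $X_{\tilde{K}}/T$ is identified with the ideal $(p^m,S)\subset\Zp[[S]]/(F(S))$, where $p^m=\#\Gal(L_K\cap\tilde{K}/K)$ and $F$ is the distinguished characteristic polynomial, so that
\begin{equation*}
X_{\tilde{K}}/(p,S,T)\isom (p^m,S)\big/\left(p^{m+1},\,pS,\,S^2,\,F(S)\right).
\end{equation*}
Since $p^{m+1}\mid F(0)$ in this subcase, for $\lam=1$ one has $S\equiv-F(0)\equiv0$ in the quotient, which is then generated by $p^m$ alone and has dimension $1$ (evaluation at $S=0$ shows $p^m$ does not vanish); for $\lam\geq2$ one has $F(S)\equiv F(0)\bmod(pS,S^2)$, so the quotient is $(p^m,S)/(p^{m+1},pS,S^2)\isom\Fp^{\oplus2}$. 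If you want to remain inside your framework, you must actually prove your control statement along $\tilde{K}/\Ki$, and descending one $\Zp$-variable at a time, as the paper does, is the natural way to make it precise.
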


From the point of view of applications, 
we are interested in the condition for $X_{\tilde{K}}$ to be $\Zp[[S,T]]$-cyclic.
In \S \ref{Classification in the case lam=2}, we classify when $X_{\tilde{K}}$ is $\Zp[[S,T]]$-cyclic in a certain case with $\dim_{\Fp} (A_K/p)=1$ and $\lam=2$
(Theorem \ref{main thm of classification lambda=2}).
We are going to give some numerical examples about it and introduce a method of calculating these examples
in the forth coming paper.



\section{Proof of Theorem \ref{thm 1}}\label{Proof of Theorem 1}
\subsection{Preliminary from general module theory}\label{Preliminary}

For a module $M$ and a morphism $\varphi \colon M \to M$, we define $M[\varphi]:=\Ker(\varphi)$ and $\varphi M:={\rm Im}(M)$.

\begin{prop}\label{ker-im 1}
Let 
\begin{eqnarray}\label{CD of ker-im 1}
\begin{CD}
0 @>>> L @>>> M @>>> N @>>> 0
\\
& & @V \alp VV  @V \beta VV   @V \gam VV & 
\\
0 @>>> L @>>> M @>>> N @>>> 0
\end{CD}
\end{eqnarray}
be an exact commutative diagram of modules.
Regard $L$ as a submodule in $M$.
If $\gam N=0$ and $\# (M/M[\beta]+\beta M) <\infty$, then the following equation
\begin{eqnarray}\label{EQ of ker-im 1}
\# \f{M}{M[\beta]+\beta M}
=
\# \f{L}{L[\alp]+\alp L}
\cdot
\# \f{M[\beta] \cap \beta M}{L[\alp] \cap \alp L}
\end{eqnarray}
holds.
\end{prop}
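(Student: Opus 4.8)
The plan is to exploit the hypothesis $\gamma N = 0$ to collapse the whole problem into manipulations inside $M$, using only the isomorphism theorems and Dedekind's modular law; no spectral sequence or snake lemma is really needed.

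First I would record the structural consequence of $\gamma N = 0$. For $m \in M$ with image $\bar m \in N$, the element $\beta m$ maps to $\gamma \bar m = 0$ in $N$, so $\beta m \in L$; thus $\beta M \subseteq L$. Since $L$ is $\beta$-stable with $\beta|_L = \alpha$, this gives $L[\alpha] = L \cap M[\beta]$ and $\alpha L = \beta L \subseteq \beta M$, and — the one genuinely non-formal input — the identity $M[\beta] \cap \beta M = L[\alpha] \cap \beta M$: any element of $M[\beta]\cap\beta M$ lies in $L$ (because $\beta M \subseteq L$), hence in $L \cap M[\beta] = L[\alpha]$.

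Next I would pass to $N = M/L$. Because $\beta M \subseteq L$, reduction modulo $L$ sends $M[\beta]+\beta M$ onto the image $\overline{M[\beta]}$ of $M[\beta]$ in $N$, producing a surjection $M/(M[\beta]+\beta M) \twoheadrightarrow N/\overline{M[\beta]}$ whose kernel, by the modular law, is $L/(L[\alpha]+\beta M)$. Separately, the isomorphism $M/M[\beta] \cong \beta M$ induced by $\beta$ carries the image of $L$ onto $\beta L = \alpha L$, so $N/\overline{M[\beta]} \cong \beta M/\alpha L$. Combining these yields the intermediate identity
$$
\# \frac{M}{M[\beta]+\beta M} = \# \frac{L}{L[\alpha]+\beta M}\cdot \# \frac{\beta M}{\alpha L},
$$
and every factor is finite, since the right-hand quotient is a quotient and its kernel a submodule of the finite module on the left.

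Finally I would invoke the elementary short exact sequence $0 \to (B\cap C)/(A\cap C) \to B/A \to (B+C)/(A+C) \to 0$, valid for submodules $A \subseteq B$ and arbitrary $C$ (a one-line consequence of $B\cap(A+C) = A+(B\cap C)$), applied with $A = \alpha L \subseteq B = \beta M$ and $C = L[\alpha]$. Its kernel is $(\beta M \cap L[\alpha])/(L[\alpha]\cap\alpha L) = (M[\beta]\cap\beta M)/(L[\alpha]\cap\alpha L)$, the desired middle factor, while its cokernel $(L[\alpha]+\beta M)/(L[\alpha]+\alpha L)$ combines with $L/(L[\alpha]+\beta M)$ to give exactly $L/(L[\alpha]+\alpha L)$. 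Substituting these into the displayed identity produces the claimed formula. The main obstacle is purely organizational: arranging the chain of sub/quotient identifications so that the intermediate terms $L[\alpha]+\beta M$ and $(L[\alpha]+\beta M)/(L[\alpha]+\alpha L)$ telescope away cleanly; the substantive content is entirely contained in the reduction $\beta M \subseteq L$ and the resulting identity $M[\beta]\cap\beta M = L[\alpha]\cap\beta M$.
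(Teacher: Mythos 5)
Your proof is correct, and it takes a genuinely different route from the paper's. The paper constructs an auxiliary exact commutative diagram whose top row is $0 \to L[\alp]\op\alp L \to M[\beta]\op\beta M \to (M[\beta]/L[\alp])\op(\beta M/\alp L) \to 0$, mapped to $0\to L\to M\to N\to 0$ by difference maps $(x,y)\mapsto x-y$; it shows that the right-hand vertical map $n$ satisfies $\Ker(n)\isom\Coker(n)$ (both identified with $\beta M/\alp L$, the cokernel via $M/(L+M[\beta])\isom\beta M/\alp L$), and then reads the order formula off the resulting snake-lemma sequence. You dispense with both the auxiliary diagram and the snake lemma: after isolating the same two consequences of $\gam N=0$, namely $\beta M\subseteq L$ and the isomorphism $M/(L+M[\beta])\isom\beta M/\alp L$ induced by $\beta$, you run a purely lattice-theoretic index count --- first $\#(M/(M[\beta]+\beta M))=\#(L/(L[\alp]+\beta M))\cdot\#(\beta M/\alp L)$, then a second splitting of $\beta M/\alp L$ via the modular-law exact sequence $0\to(B\cap C)/(A\cap C)\to B/A\to(B+C)/(A+C)\to 0$, whose cokernel factor telescopes along the tower $L[\alp]+\alp L\subseteq L[\alp]+\beta M\subseteq L$. (The identity $L\cap M[\beta]=L[\alp]$, which the paper extracts from the snake lemma as injectivity of $M[\beta]/L[\alp]\to N$, is for you immediate from $\alp=\beta|_L$.) As for what each approach buys: yours is more elementary and makes every finiteness claim transparent, since each factor is exhibited as a submodule or a quotient of a module already known to be finite; the paper's homological packaging, on the other hand, is the form that gets reused and iterated in the proof of Corollary \ref{ker-im 2}(i), where the same diagram-and-snake argument is applied inductively along the filtration $M_i=\beta^i M+L$, so it is better adapted to that later induction.
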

 
\begin{proof}
We consider an exact commutative diagram
$$
\begin{CD}
0 @>>> L[\alp] \op \alp L @>>> M[\beta] \op \beta M @>>> \f{M[\beta]}{L[\alp]} \op \f{\beta M}{\alp L} @>>> 0
\\
& & @V l VV  @V m VV @V n VV  
\\
0 @>>> L @>>> M @>>> N @>>> 0,
\end{CD}
$$
where the vertical maps are defined by the difference between the two components, for example, 
$l \colon (x,y) \mapsto x-y$.
We will show $\Ker(n) \isom \Coker(n)$.
Since $\gam N=0$, the image of $\beta M/\alp L$ by $n$ is zero.
Also the natural map $M[\beta]/L[\alp] \to N[\gam]=N$ is injective by the snake lemma.
Hence we obtain
$\Ker(n)=\beta M/\alp L.$
On the other hand,
we see
$$
\Coker(n)
=
M/(L+M[\beta]+\beta M)
=
M/(L+M[\beta]).
$$
Hence an exact sequence 
$0 \to L+M[\beta] \to M \up{\beta}{\to} \beta M/\alp L \to 0$
induces an isomorphism 
$\beta M/\alp L \isom 
\Coker(n)$.
Therefore we obtain $\Ker(n) \isom \Coker(n)$.
Applying the snake lemma to the first exact commutative diagram, we have an exact sequence
\begin{eqnarray*}\label{EX of ker-im 1}
0 \to \f{M[\beta] \cap \beta M}{L[\alp] \cap \alp L} \to \Ker(n) \to \f{L}{L[\alp]+\alp L} \to \f{M}{M[\beta]+\beta M} \to \Coker(n) \to 0.
\end{eqnarray*}
From this and the assumption that $\# M/M[\beta]+\beta M <\infty$, we have the claim.
\end{proof}



\begin{cor}\label{ker-im 2}
For the exact commutative diagram (\ref{CD of ker-im 1}) in Proposition \ref{ker-im 1}, 
suppose that $\# (M/M[\beta]+\beta M) <\infty$.
\makeatletter
  \parsep   = 0pt
  \labelsep = .5pt
  \def\@listi{%
     \leftmargin = 20pt \rightmargin = 0pt
     \labelwidth\leftmargin \advance\labelwidth-\labelsep
     \topsep     = 0\baselineskip
     \partopsep  = 0pt \itemsep       = 0pt
     \itemindent = 0pt \listparindent = 10pt}
  \let\@listI\@listi
  \@listi
  \def\@listii{%
     \leftmargin = 20pt \rightmargin = 0pt
     \labelwidth\leftmargin \advance\labelwidth-\labelsep
     \topsep     = 0pt \partopsep     = 0pt \itemsep   = 0pt
     \itemindent = 0pt \listparindent = 10pt}
  \let\@listiii\@listii
  \let\@listiv\@listii
  \let\@listv\@listii
  \let\@listvi\@listii
  \makeatother
\ 
\begin{itemize}
\item[{\rm (i)}]
Assume that there exists some integer $n>0$ such that $\gam^n N=0$.
Then the equation {\rm (\ref{EQ of ker-im 1})} holds.
In particular, if all modules in {\rm (\ref{CD of ker-im 1})} are $\Zp[[S]]$-modules, the maps $\alp$, $\beta$, $\gam$ are multiplication by $S$, and $N$ is finite, then the equation {\rm (\ref{EQ of ker-im 1})} holds.
\item[{\rm (ii)}]
Assume that $\gam N=0$, $M[\beta]+\beta M/\beta M$ is torsion-free, and $L/\alp L$ is finite.
Then
$$
\# \f{M}{M[\beta]+\beta M}=\# \f{L}{\alp L}.
$$
\end{itemize}
\end{cor}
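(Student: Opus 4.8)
The plan is to derive both parts from Proposition \ref{ker-im 1}, whose hypothesis only requires the weaker condition $\gamma N=0$. Throughout I work, as in the ``in particular'' clause of (i), with $\Zp[[S]]$-modules in which $S$ acts as $\beta$ on $M$ (restricting to $\alpha$ on $L$ and inducing $\gamma$ on $N$), so that ``torsion-free'' means $\Zp[[S]]$-torsion-free.

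For (i), I would reduce the hypothesis $\gamma^n N=0$ to the case $n=1$ of Proposition \ref{ker-im 1} by filtering $N$. Let $M_i$ be the preimage in $M$ of $\gamma^i N$ under the projection $M\twoheadrightarrow N$, so that $L=M_n\subseteq M_{n-1}\subseteq\cdots\subseteq M_0=M$ with $M_i/L\isom\gamma^i N$. Since $\beta$ is compatible with the projection via $\gamma$, one checks $\beta(M_i)\subseteq M_{i+1}$, so $\beta$ restricts to each $M_i$ and induces an exact commutative diagram built on $0\to M_{i+1}\to M_i\to \gamma^iN/\gamma^{i+1}N\to 0$, on whose quotient the map induced by $\gamma$ is zero. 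Applying Proposition \ref{ker-im 1} to each layer (the required finiteness propagating downward from the top layer $M_0=M$, since every resulting factor is a cardinality dividing the finite $\#(M/(M[\beta]+\beta M))$) gives
\[
\#\frac{M_i}{M_i[\beta]+\beta M_i}=\#\frac{M_{i+1}}{M_{i+1}[\beta]+\beta M_{i+1}}\cdot\#\frac{M_i[\beta]\cap\beta M_i}{M_{i+1}[\beta]\cap\beta M_{i+1}}.
\]
Multiplying these for $i=0,\dots,n-1$ telescopes the leading quotients down to $\#(L/(L[\alpha]+\alpha L))$ (using $\beta|_L=\alpha$) and, via the inclusions $M_{i+1}[\beta]\cap\beta M_{i+1}\subseteq M_i[\beta]\cap\beta M_i$, telescopes the intersection ratios into $\#\bigl((M[\beta]\cap\beta M)/(L[\alpha]\cap\alpha L)\bigr)$, which is exactly (\ref{EQ of ker-im 1}). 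For the ``in particular'' assertion, a finite $\Zp[[S]]$-module $N$ is finitely generated and annihilated by a power of the maximal ideal $(p,S)$, hence by some $S^k$; taking $\gamma$ to be multiplication by $S$ and $n=k$ reduces to the first part.

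For (ii), the hypothesis $\gamma N=0$ lets me invoke Proposition \ref{ker-im 1} directly, so (\ref{EQ of ker-im 1}) holds; it remains to identify the correction factor. From $\gamma N=0$ I get $\beta M\subseteq L$. Now $(M[\beta]+\beta M)/\beta M\isom M[\beta]/(M[\beta]\cap\beta M)$ is annihilated by $S$ (because $\beta$, i.e.\ multiplication by $S$, vanishes on $M[\beta]$); being both $S$-torsion and torsion-free it must be zero, so $M[\beta]\subseteq\beta M\subseteq L$. Consequently $M[\beta]=L\cap M[\beta]=L[\alpha]$ and $M[\beta]\cap\beta M=L[\alpha]$, and substituting into (\ref{EQ of ker-im 1}) gives
\[
\#\frac{M}{M[\beta]+\beta M}=\#\frac{L}{L[\alpha]+\alpha L}\cdot\#\frac{L[\alpha]}{L[\alpha]\cap\alpha L}.
\]
Comparing with the elementary identity $\#(L/\alpha L)=\#\bigl(L/(L[\alpha]+\alpha L)\bigr)\cdot\#\bigl(L[\alpha]/(L[\alpha]\cap\alpha L)\bigr)$, which follows from the isomorphism $(L[\alpha]+\alpha L)/\alpha L\isom L[\alpha]/(L[\alpha]\cap\alpha L)$, yields $\#(M/(M[\beta]+\beta M))=\#(L/\alpha L)$.

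The step I expect to be the main obstacle is the bookkeeping in (i): confirming that $\beta$ genuinely stabilizes the filtration $\{M_i\}$, that the finiteness hypothesis indeed propagates down the layers so that Proposition \ref{ker-im 1} applies at each stage, and that the two families of factors telescope cleanly into the two factors of (\ref{EQ of ker-im 1}). Part (ii), by contrast, is comparatively routine once one notices that a module which is simultaneously killed by $S$ and torsion-free over $\Zp[[S]]$ must vanish, which collapses $M[\beta]\cap\beta M$ onto $L[\alpha]$.
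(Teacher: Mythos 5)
Your part (i) is correct and is essentially the paper's own argument: your filtration $M_i$ (the preimage of $\gam^iN$) coincides with the paper's $M_i:=\beta^iM+L$, the layer-by-layer application of Proposition \ref{ker-im 1} and the telescoping are the same, and your justification of the finiteness propagation (each layer's equation exhibits $\#\left(M_{i+1}/(M_{i+1}[\beta]+\beta M_{i+1})\right)$ as a divisor of a finite cardinality, which licenses the next application) is a legitimate, even slightly cleaner, version of the paper's inductive remark; the reduction of the ``in particular'' clause via $S^kN=0$ is also fine. Note that this argument is purely formal, so the restriction to $\Zp[[S]]$-modules that you impose at the outset is unnecessary for (i).

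Part (ii), however, has a genuine gap, located exactly at the step you called routine. You read ``torsion-free'' as $\Zp[[S]]$-torsion-free and conclude that $(M[\beta]+\beta M)/\beta M$, being killed by $S$, must vanish, i.e.\ $M[\beta]\subseteq\beta M$. Under that reading the hypothesis is self-defeating: $(M[\beta]+\beta M)/\beta M$ is \emph{always} killed by $\beta$, so requiring it to be $\Zp[[S]]$-torsion-free is the same as requiring it to be zero, and your proof covers only that degenerate case. This excludes precisely the situation the corollary was designed for. In \S\ref{Galois coinvariants} the corollary is applied to the sequence (\ref{ex chi}), with $M=X_{\Ki}^\chi$, $L=Y^\chi$, $N=\mathcal{O}_\chi$, and there $\left((X_{\Ki}[S]+SX_{\Ki})/SX_{\Ki}\right)^\chi$ is nonzero: $SX_{\Ki}^\chi$ lies in the kernel of $X_{\Ki}^\chi\surj\mathcal{O}_\chi$, while the finiteness of $\left(X_{\Ki}/(X_{\Ki}[S]+SX_{\Ki})\right)^\chi$ forces the image of $X_{\Ki}^\chi[S]$ in $\mathcal{O}_\chi$ to have finite index, hence to be nonzero; so $X_{\Ki}^\chi[S]\not\subseteq SX_{\Ki}^\chi$. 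The intended meaning of ``torsion-free'' is therefore torsion-free as a $\Zp$-module (no nonzero elements of finite order), which is perfectly compatible with being killed by $S$; under that reading your inference ``$S$-torsion and torsion-free implies zero'' is false, and with it your equalities $M[\beta]=L[\alp]$ and $M[\beta]\cap\beta M=L[\alp]$.

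What survives---and is what the paper actually proves---is only an isomorphism, not an equality of submodules. Since $\gam N=0$ gives $\beta M\subseteq L$, one has $M[\beta]\cap\beta M\subseteq L\cap M[\beta]=L[\alp]$, whence an exact sequence
\[
0 \to \f{M[\beta]\cap\beta M}{L[\alp]\cap\alp L} \to \f{L[\alp]}{L[\alp]\cap\alp L} \to \f{M[\beta]}{M[\beta]\cap\beta M}.
\]
The middle term is finite (it is isomorphic to $(L[\alp]+\alp L)/\alp L\subseteq L/\alp L$), and the right-hand term is isomorphic to $(M[\beta]+\beta M)/\beta M$, which is torsion-free; a homomorphism from a finite module to a torsion-free module is zero, so the left-hand map is an isomorphism. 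Substituting $\#\left((M[\beta]\cap\beta M)/(L[\alp]\cap\alp L)\right)=\#\left(L[\alp]/(L[\alp]\cap\alp L)\right)$ into (\ref{EQ of ker-im 1}) and concluding with your elementary identity for $\#(L/\alp L)$---that last piece of bookkeeping is the same as the paper's---completes the proof. So the missing idea is this exact-sequence argument, which replaces the false inclusion $M[\beta]\subseteq\beta M$.
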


\begin{proof}
(i) 
Put $M_0:=M$,
$M_i:=\beta^i M+L$ ($i \ge 0$).
Note that $M_n=L$, since
$0=\gam^n N \isom \beta^n M+L/L$.
Define $N_i$ by the exact sequence
$$
0 \to M_{i+1} \to M_i \to N_i \to 0
\ \ 
(i \ge 0).
$$
Then we have $\beta N_i=0$.
Therefore we can easily check that
$\# (M_i/M_i[\beta]+\beta M_i) <\infty$ for $i=0,\ldots,n$, 
using the same method as in the proof of Proposition \ref{ker-im 1} and the induction on $i$.
Therefore we can apply Proposition \ref{ker-im 1} to the above exact sequence, so that
$$
\# \f{M_i}{M_i[\beta]+\beta M_i}
=
\# \f{M_{i+1}}{M_{i+1}[\beta]+\beta M_{i+1}}
\cdot
\# \f{M_i[\beta] \cap \beta M_i}{M_{i+1}[\beta] \cap \beta M_{i+1}}.
$$
Taking the products from $i=0$ to $i=n-1$, we have the claim since $M_n=L$.
\\
(ii)
Note that $\beta M \subset L$ and $L[\alp]/L[\alp] \cap \alp L$ is finite by the assumption.
Hence the exact sequence 
$$
0 \to \f{M[\beta] \cap \beta M}{L[\alp] \cap \alp L} \to \f{L[\alp]}{L[\alp] \cap \alp L} \to 
\f{M[\beta]}{M[\beta] \cap \beta M}
$$
implies that
$({M[\beta] \cap \beta M})/({L[\alp] \cap \alp L}) \isom {L[\alp]+\alp L}/{\alp L}$, 
since ${M[\beta]+\beta M}/{\beta M}$ is torsion-free.
Therefore, 
$$
\# \f{M}{M[\beta]+\beta M}
=
\# \f{L}{L[\alp]+\alp L}
\cdot
\# \f{M[\beta] \cap \beta M}{L[\alp] \cap \alp L}
=
\# \f{L}{\alp L}
$$
by (i).
This completes the proof.
\end{proof}



Let $\mathcal{O}$ be the ring either $\Zp$ or $\mathcal{O}_\chi$.
Applying the corollary to $\mathcal{O}[[S]]$-modules, we obtain the following corollary.
Let $X$ be a finitely generated torsion 
$\mathcal{O}[[S]]$-module which has no nontrivial finite $\mathcal{O}[[S]]$-submodules.
Then, by the structure theorem of finitely generated torsion 
$\mathcal{O}[[S]]$-modules 
(see \cite[Theorem 13.12]{Was}),
there is an exact sequence of 
$\mathcal{O}[[S]]$-modules
\begin{eqnarray}\label{str thm of X}
0 \to X \to E \to C \to 0
\end{eqnarray}
such that $E$ is described as $E=\bigop_j \mathcal{O}[[S]]/(f_j(S)^{n_j})$ and $C$ is finite.
Here, each $f_j(S) \in \mathcal{O}[S]$ is an irreducible distinguished polynomial or a uniformizer in $\mathcal{O}$.

\begin{cor}\label{X/X[S]+SX and lead. coeff.}
With the notation as above, we denote the first non-vanishing coefficient of $\prod_j f_j(S)^{n_j}$ by $f^*$.
If $S^2$ does not divide $f_j(S)^{n_j}$ for any $j$, then
$$
\#(X/X[S]+SX)
=
\# \mathcal{O}/f^*.
$$
\end{cor}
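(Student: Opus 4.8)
The plan is to transfer the computation from $X$ to the explicit elementary module $E$ via the structure sequence \eqref{str thm of X}, that is $0 \to X \to E \to C \to 0$, and then to apply Corollary \ref{ker-im 2}(i) with all three vertical maps taken to be multiplication by $S$. In the notation of Proposition \ref{ker-im 1} this means $L=X$, $M=E$, $N=C$ and $\alp=\be=\gam=S$. Since $C$ is finite, it has finite length as an $\mathcal{O}[[S]]$-module and is therefore annihilated by a power of the maximal ideal $(\pi,S)$ of the local ring $\mathcal{O}[[S]]$ (here $\pi$ denotes a uniformizer of $\mathcal{O}$); in particular $S^nC=0$ for some $n>0$, so the hypothesis $\gam^nN=0$ of Corollary \ref{ker-im 2}(i) holds. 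Granting that $\#(E/(E[S]+SE))<\infty$ (checked below), equation \eqref{EQ of ker-im 1} gives
$$
\# \f{E}{E[S]+SE}
=
\# \f{X}{X[S]+SX}
\cdot
\# \f{E[S]\cap SE}{X[S]\cap SX}.
$$
Thus it suffices to compute the left-hand side and to show the correction factor on the right is trivial.

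I would compute the left-hand side summand by summand, using that multiplication by $S$ respects the decomposition $E=\bigop_j \mathcal{O}[[S]]/(f_j^{n_j})$, so that $E/(E[S]+SE)=\bigop_j X_j/(X_j[S]+SX_j)$ with $X_j:=\mathcal{O}[[S]]/(f_j^{n_j})$. Write $h=f_j^{n_j}$. If $S\nmid h$ — that is, $f_j$ is a uniformizer or a distinguished irreducible polynomial different from $S$ — then, since $\mathcal{O}[[S]]$ is a UFD and $S$ is prime, one has $X_j[S]=0$ and $X_j/SX_j \isom \mathcal{O}[[S]]/(S,h)\isom \mathcal{O}/(h(0))$, where $h(0)$ is the constant term of $h$; its order is $\#\mathcal{O}/f_j^*$ because the first non-vanishing coefficient of $h$ is exactly $h(0)$. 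The hypothesis $S^2\nmid h$ leaves only the case $f_j=S$ with $n_j=1$, for which $X_j\isom\mathcal{O}$ with $S$ acting as $0$; then $X_j/(X_j[S]+SX_j)=0$ contributes trivially while $f_j^*=1$. In particular every summand is finite, so $\#(E/(E[S]+SE))<\infty$ as needed above. Grouping the factors equal to $S$ and the remaining factors shows that the first non-vanishing coefficient $f^*$ of $\prod_j f_j^{n_j}$ equals $\prod_j f_j^*$ up to a unit of $\mathcal{O}$; multiplicativity of the index over the discrete valuation ring $\mathcal{O}$ then yields $\#(E/(E[S]+SE))=\prod_j \#\mathcal{O}/f_j^*=\#\mathcal{O}/f^*$.

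It remains to show the correction factor is trivial, which is where the hypothesis $S^2\nmid f_j^{n_j}$ genuinely does its work. On each summand with $S\nmid h$ we have $X_j[S]=0$, and on each summand with $f_j=S$ we have $SX_j=0$; hence $E[S]\cap SE=0$. Since $X\subset E$ with $X[S]=X\cap E[S]$ and $SX\subset SE$, this forces $X[S]\cap SX=0$ as well, so the correction factor equals $1$. Substituting into the displayed identity gives $\#(X/(X[S]+SX))=\#\mathcal{O}/f^*$, as claimed. The step to get right is the vanishing of $E[S]\cap SE$: without the hypothesis, a summand $\mathcal{O}[[S]]/(S^{n_j})$ with $n_j\ge 2$ would simultaneously make $E/(E[S]+SE)$ infinite and $E[S]\cap SE$ nonzero, so $S^2\nmid f_j^{n_j}$ is precisely what both halves of the argument require; the only other care needed is the elementary identification $\mathcal{O}[[S]]/(S,h)\isom\mathcal{O}/(h(0))$ and the bookkeeping relating $f^*$ to $\prod_j f_j^*$.
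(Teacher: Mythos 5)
Your proof is correct and follows essentially the same route as the paper: both apply Corollary \ref{ker-im 2}(i) to the structure sequence $0 \to X \to E \to C \to 0$ with all vertical maps given by multiplication by $S$, compute $E/(E[S]+SE)$ summand by summand, and use the hypothesis $S^2 \nmid f_j(S)^{n_j}$ to force $E[S] \cap SE = 0$ so that the correction factor is trivial. Your write-up merely makes explicit some steps the paper leaves implicit (the finiteness check, the reduction $X[S]\cap SX \subset E[S]\cap SE$, and the bookkeeping identifying $f^*$ with $\prod_j f_j^*$).
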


\begin{proof}
In the case where $E=\mathcal{O}[[S]]/(f(S)^{n})$ with $S \nmid f(S)$, we have
$$
E[S]=0,\ \ E/E[S]+SE 
\isom 
\mathcal{O}/(f(0)^{n}).
$$
And also, in the case where $E=\mathcal{O}[[S]]/(S)$, we see
$$
SE=0,\ \ E/E[S]+SE =0.
$$
Hence, in both cases, $E[S] \cap SE=0$.
Applying Corollary \ref{ker-im 2} (i) to the exact sequence (\ref{str thm of X}), we have
$
\#(X/X[S]+SX)=\#(E/E[S]+SE)
=
\# \mathcal{O}/f^*.
$
This completes the proof.
\end{proof}




\subsection{Galois coinvariants}\label{Galois coinvariants}

We use the notation in \S \ref{notation}.
Let $\chi \colon \Gal(K/k) \to \overline{\Q}_p^\x$ be an odd character.
Suppose that the assumption in Theorem \ref{thm 1}.
In other words, 
we assume that $\mu_p \not\subset K$ and 
there is only one prime ideal $\p$ lying above $p$ in $k$ which satisfies $\chi(\p)=1$.
Recall that $\tilde{K}'$ is the maximal multiple $\Zp$-extension such that $\Ki \subset \tilde{K}'$ and $\tilde{K}'/\Ki$ is unramified.
Then there is a unique abelian extension $\tilde{K}_\chi/K$ contained in $\tilde{K}'$ such that
$$
\Gal(\tilde{K}_\chi/\Ki) \isom \mathcal{O}_\chi
$$
as $\Gal(K/k)$-modules by 
\cite[Lemma 1.5]{DDP}.
Also, recall that we identify 
$\Zp[[\Gal(\tilde{K}_\chi/K)]]$ with $\Zp[[S,T_1,\ldots,T_{d_\chi}]]$,
where $d_\chi:=[\mathcal{O}_\chi:\Zp]$.
Since $L_{\tilde{K}_\chi}/k$ is a Galois extension because of the maximality of $L_{\tilde{K}_\chi}$,
$\Gal(\tilde{K}_\chi/k)$ acts on $X_{\tilde{K}_\chi}$ by the inner product:
$\alp(x):=\tilde{\alp}x\tilde{\alp}^{-1}$ for $\alp \in \Gal(\tilde{K}_\chi/k)$ and $x \in X_{\tilde{K}_\chi}$, where $\tilde{\alp} \in \Gal(L_{\tilde{K}_\chi}/k)$ is a fixed extension of $\alp$.
%
%
Note that the action is independent of the choice of extensions $\tilde{\alp}$.
By this action, $X_{\tilde{K}_\chi}$ becomes a $\Gal(\tilde{K}_\chi/k)$-module.
We have two exact sequences
\begin{eqnarray}\label{gemstone tilde{K}_chi}
1 \to X_{\tilde{K}_\chi} \to \Gal(L_{\tilde{K}_\chi}/\Ki) \to \mathcal{O}_\chi \to 1
\end{eqnarray}
and
\begin{eqnarray}\label{gemstone tilde{K}'}
1 \to X_{\tilde{K}'} \to \Gal(L_{\tilde{K}'}/\Ki) \to \Gal(\tilde{K}'/\Ki) \to 1.
\end{eqnarray}
Put $Y:=X_{\tilde{K}_\chi}/(T_1,\ldots,T_{d_\chi})$.
The first part of the following lemma is a partial generalization of Ozaki \cite[Lemma 1]{Ozaki01}.

\begin{lem}\label{Ozaki01 ex seq (sp case)}
\makeatletter
  \parsep   = 0pt
  \labelsep = .5pt
  \def\@listi{%
     \leftmargin = 20pt \rightmargin = 0pt
     \labelwidth\leftmargin \advance\labelwidth-\labelsep
     \topsep     = 0\baselineskip
     \partopsep  = 0pt \itemsep       = 0pt
     \itemindent = 0pt \listparindent = 10pt}
  \let\@listI\@listi
  \@listi
  \def\@listii{%
     \leftmargin = 20pt \rightmargin = 0pt
     \labelwidth\leftmargin \advance\labelwidth-\labelsep
     \topsep     = 0pt \partopsep     = 0pt \itemsep   = 0pt
     \itemindent = 0pt \listparindent = 10pt}
  \let\@listiii\@listii
  \let\@listiv\@listii
  \let\@listv\@listii
  \let\@listvi\@listii
  \makeatother
\ 
\begin{itemize}
\item[{\rm (i)}]
There is an exact sequence of $\Zp [[S]]$-modules
\begin{eqnarray}\label{ex chi}
0 \to Y^{\chi} \to X_{\Ki}^{\chi} \to \mathcal{O}_\chi \to 0.
\end{eqnarray}
\item[{\rm (ii)}]
Assume that Leopoldt's conjecture holds for $K^+$ and $p$, then the latter half of Theorem \ref{thm 1} holds, in other words,
there is a canonical isomorphism
$\left((X_{\tilde{K}'})_{\Gal(\tilde{K}'/\Ki)}\right)^\chi \isom Y^\chi$.
\end{itemize}
\end{lem}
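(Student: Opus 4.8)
The plan is to realize $Y=X_{\tilde{K}_\chi}/(T_1,\dots,T_{d_\chi})$ as the $\Gal(\tilde{K}_\chi/\Ki)$-coinvariants of $X_{\tilde{K}_\chi}$ and to compare it with $X_{\Ki}$ by means of a group-homology exact sequence. Write $\Gamma:=\Gal(\tilde{K}_\chi/\Ki)$, which is abelian, isomorphic as a group to $\Z_p^{d_\chi}$ and as a $\Gal(K/k)$-module to $\mathcal{O}_\chi$. Since $\tilde{K}_\chi\subset\tilde{K}'$ and $\tilde{K}'/\Ki$ is unramified, the extension $\tilde{K}_\chi/\Ki$ is unramified; hence $L_{\Ki}\tilde{K}_\chi/\tilde{K}_\chi$ is unramified abelian, so $L_{\Ki}\subseteq L_{\tilde{K}_\chi}$. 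Conversely the maximal subextension of $L_{\tilde{K}_\chi}/\Ki$ that is abelian over $\Ki$ is unramified over $\Ki$, hence contained in $L_{\Ki}$; therefore $L_{\Ki}$ is exactly this maximal abelian subextension, i.e. the abelianization of $G:=\Gal(L_{\tilde{K}_\chi}/\Ki)$ equals $X_{\Ki}$. First I would record this together with the extension $1\to X_{\tilde{K}_\chi}\to G\to\Gamma\to1$.

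Applying the five-term homology exact sequence to this extension of pro-$p$ groups (with $\Gamma$ acting on the abelian $X_{\tilde{K}_\chi}$) gives
$$H_2(\Gamma)\to (X_{\tilde{K}_\chi})_{\Gamma}\to X_{\Ki}\to \Gamma\to 0,$$
and $(X_{\tilde{K}_\chi})_{\Gamma}=Y$ because $(T_1,\dots,T_{d_\chi})$ is the augmentation ideal of $\Z_p[[\Gamma]]$. Now I take $\chi$-parts; this functor is exact since $p\nmid[K:k]$, and it sends $\Gamma$ to $\mathcal{O}_\chi$. The point of part (i) is that the leftmost term dies: $H_2(\Gamma)\isom\wedge^2_{\Z_p}\Gamma\isom\wedge^2_{\Z_p}\mathcal{O}_\chi$ as $\Gal(K/k)$-modules, on which complex conjugation $c$ acts through $\chi(c)^2=(-1)^2=1$, so $H_2(\Gamma)$ is even while $\chi$ is odd; hence $H_2(\Gamma)^\chi=0$. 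This yields the short exact sequence (\ref{ex chi}), with $Y^\chi$ identified as $\Gal(L_{\Ki}/\tilde{K}_\chi)^\chi$ inside $X_{\Ki}^\chi$.

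For part (ii) I would run the identical argument with $\tilde{K}'$ in place of $\tilde{K}_\chi$: setting $\mathcal{G}:=\Gal(\tilde{K}'/\Ki)$, the same reasoning shows that the abelianization of $\Gal(L_{\tilde{K}'}/\Ki)$ is again $X_{\Ki}$ and produces the tail $H_2(\mathcal{G})\to (X_{\tilde{K}'})_{\mathcal{G}}\to X_{\Ki}\to\mathcal{G}\to0$. Taking $\chi$-parts again reduces everything to the vanishing of $H_2(\mathcal{G})^\chi=(\wedge^2_{\Z_p}\mathcal{G})^\chi$. Decomposing $\mathcal{G}=\bigoplus_\psi\mathcal{G}^\psi$ over characters $\psi$ of $\Gal(K/k)$, the wedge square contains the cross terms $\mathcal{G}^\psi\ox\mathcal{G}^{\psi'}$, on which $c$ acts by $\psi(c)\psi'(c)$, so an odd contribution could only arise from a pair with exactly one even character. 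This is precisely where Leopoldt's conjecture for $K^+$ enters: it forces the even part $\mathcal{G}^\psi$ to vanish for every even $\psi$ (the only even $\Z_p$-extension direction over $K$ is the cyclotomic one, already absorbed into $\Ki$), so every surviving cross term is odd times odd, hence even, and $H_2(\mathcal{G})^\chi=0$.

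Finally, with both obstructions gone, $(X_{\tilde{K}'})_{\mathcal{G}}^{\chi}$ and $Y^\chi$ sit in the two short exact sequences $0\to(X_{\tilde{K}'})_{\mathcal{G}}^{\chi}\to X_{\Ki}^\chi\to\mathcal{G}^\chi\to0$ and (\ref{ex chi}), whose right-hand terms $\mathcal{G}^\chi$ and $\Gamma^\chi$ are both canonically $\mathcal{O}_\chi$ and whose two surjections out of $X_{\Ki}^\chi$ agree (both are restriction to the $\chi$-component of the unramified direction, compatible via $\tilde{K}_\chi\subseteq\tilde{K}'$). The natural restriction $X_{\tilde{K}'}\to X_{\tilde{K}_\chi}$ then induces a morphism of the two sequences that is the identity on $X_{\Ki}^\chi$ and on $\mathcal{O}_\chi$, so the five lemma delivers the canonical isomorphism $\left((X_{\tilde{K}'})_{\Gal(\tilde{K}'/\Ki)}\right)^\chi\isom Y^\chi$. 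I expect the main obstacle to be the homological bookkeeping around the $H_2$ term: justifying the five-term sequence in this profinite, nonabelian $G$ and, above all, pinning down exactly how Leopoldt's conjecture kills the even part of $\mathcal{G}$ so that no cross term survives in the $\chi$-part.
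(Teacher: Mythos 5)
Your proposal is correct and takes essentially the same route as the paper: both apply the five-term (Hochschild--Serre) homology sequence to $1 \to X_{\tilde{K}_\chi} \to \Gal(L_{\tilde{K}_\chi}/\Ki) \to \mathcal{O}_\chi \to 1$ (and its analogue for $\tilde{K}'$), kill $H_2^\chi$ by the parity argument that complex conjugation acts trivially on the wedge square $\mathcal{O}_\chi \wedge_{\Zp} \mathcal{O}_\chi$ while $\chi$ is odd, and in part (ii) use Leopoldt's conjecture for $K^+$ to force $\Gal(\tilde{K}'/\Ki)^+=0$ so that no odd contribution survives in $H_2(\Gal(\tilde{K}'/\Ki))$. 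Your explicit identification of $\Gal(L_{\tilde{K}_\chi}/\Ki)^{\rm ab}$ with $X_{\Ki}$ and the final five-lemma comparison are just spelled-out versions of steps the paper leaves implicit.
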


\begin{proof}
(i) 
Taking the Hochschild-Serre spectral sequence of (\ref{gemstone tilde{K}_chi}), we obtain an exact sequence
 of $\Gal(\Ki/k)$-modules
$$
H_2(\mathcal{O}_\chi, \Zp) \to (X_{\tilde{K}_\chi})_{\Gal(\tilde{K}_\chi/\Ki)} \to X_{\Ki} \to \mathcal{O}_\chi \to 0.
$$
Since $\Gal(K/K^+)$ acts trivially on
$H_2(\mathcal{O}_\chi, \Zp) \isom \mathcal{O}_\chi \wedge_{\Zp} \mathcal{O}_\chi$ 
(here, $\Gal(K/K^+)$ acts on the right hand side diagonally), its $\chi$-component is trivial.
Hence we obtain the claim.
\\
(ii) 
In the same way,
we have
$$
H_2(\Gal(\tilde{K}'/\Ki),\Zp) \to (X_{\tilde{K}'})_{\Gal(\tilde{K}'/\Ki)} \to X_{\Ki} \to \Gal(\tilde{K}'/\Ki) \to 0
$$
from (\ref{gemstone tilde{K}'}).
Combining this with (i), we have only to show that
$\left (\Gal(\tilde{K}'/\Ki) \wedge_{\Zp} \Gal(\tilde{K}'/\Ki)\right)^-=0$, since $\Gal(\tilde{K}'/\Ki)^\chi=\Gal(\tilde{K}_\chi/\Ki)$.
For this, it is enough to show that $\Gal(\tilde{K}'/\Ki)^+=0$.
Assume that it does not hold.
Then $\Gal(\tilde{K}'/\Ki)^+$ is nontrivial and torsion-free, which implies that there exists a $\Zp$-extension of $K^+$ different from the cyclotomic $\Zp$-extension.
This contradicts our assumption that Leopoldt's conjecture holds.
\end{proof}

It is known
%
%
%
%
that 
$(X_{\Ki}/X_{\Ki}[S]+SX_{\Ki})^\chi$ is finite and 
$(X_{\Ki}[S]+SX_{\Ki}/SX_{\Ki})^\chi$ is torsion-free
(see Kurihara \cite{Kurihara11}, for example),
since Gross's order of vanishing conjecture holds.
Applying Corollary \ref{ker-im 2} (ii) to the exact sequence (\ref{ex chi}), we have
\begin{eqnarray*}
\# (X_{\tilde{K}_\chi}/(S,T_1,\ldots,T_d))^\chi
&=&
\# (Y/SY)^\chi
=
\# Y^\chi/SY^\chi
\\
&=&
\# (X_{\Ki}/X_{\Ki}[S]+SX_{\Ki})^\chi,
\end{eqnarray*}
since the actions of $\Gal(K/k)$ and $\Gal(\Ki/K)$ are commutative.
Again, since the Gross's conjecture holds,
we can apply Corollary \ref{X/X[S]+SX and lead. coeff.} to get
$$
\# (X_{\tilde{K}_\chi}/(S,T_1,\ldots,T_d))^\chi
=
\# \mathcal{O}_\chi/f_\chi^*.
$$
This completes the proof of Theorem \ref{thm 1}.

\begin{rem}
\begin{rm}
Under several assumptions, we can calculate the value $f^{*}$ in Corollary \ref{Cor of them 1}.
More precisely,
let $K$ be an imaginary quadratic field and $p$ an odd prime number.
We suppose that $L_K \subset \widetilde{K}$.
We also suppose that $p\geq 5$ if $p$ does not split in $K$. 
Then we have
$$
[\mathrm{Gal}(\widetilde{K}/K) : \mathfrak{D}]=\# (\mathbb{Z}_p/f^{*}),
$$
where 
$\mathfrak{D}$ is the decomposition group in $\mathrm{Gal}(\widetilde{K}/K)$ of a prime lying above $p$ (Murakami \cite[Proposition 3.4]{Murakami}).
\end{rm}
\end{rem}



\section{Proof of Theorem \ref{thm 2}}\label{Proof of Thm 2}

We consider the case where $K$ is an imaginary quadratic field such that $p$ splits completely as $(p)=\P \overline{\P}$.
Then $\tilde{K}'=\tilde{K}$.
Let $\Lam$ be the ring either $\Zp[[S]]$ or $\Zp[[S,T]]$.
Recall that, for any finitely generated torsion $\Lam$-module $X$, we chose a characteristic power series ${\rm char}_{\Lam}(X) \in \Lam$ of $X$.
A $\Lam$-module $X$ is called pseudo-null if $X$ has two relatively prime annihilators in $\Lam$.

\begin {lem}\label{ann and char}
\makeatletter
  \parsep   = 0pt
  \labelsep = .5pt
  \def\@listi{%
     \leftmargin = 20pt \rightmargin = 0pt
     \labelwidth\leftmargin \advance\labelwidth-\labelsep
     \topsep     = 0\baselineskip
     \partopsep  = 0pt \itemsep       = 0pt
     \itemindent = 0pt \listparindent = 10pt}
  \let\@listI\@listi
  \@listi
  \def\@listii{%
     \leftmargin = 20pt \rightmargin = 0pt
     \labelwidth\leftmargin \advance\labelwidth-\labelsep
     \topsep     = 0pt \partopsep     = 0pt \itemsep   = 0pt
     \itemindent = 0pt \listparindent = 10pt}
  \let\@listiii\@listii
  \let\@listiv\@listii
  \let\@listv\@listii
  \let\@listvi\@listii
  \makeatother
Let 
$0 \to X \to E \to C \to 0$
be an exact sequence of finitely generated torsion $\Lam$-modules.
For a $\Lam$-module $M$, denote by ${\rm Ann}_{\Lam}(M)$ the annihilator ideal of $M$.
\begin{itemize}
\item[{\rm (i)}] 
If $E$ has no non-trivial pseudo-null $\Lam$-submodules and $C$ is a pseudo-null $\Lam$-module, then we have 
${\rm Ann}_{\Lam}(X) = {\rm Ann}_{\Lam}(E)$.
\item[{\rm (ii)}] 
Furthermore, suppose that 
$E =\bigop^s_{i=1} \Lam/\p_i^{n_i}$
where each $\p_i$ is a height one prime ideal with $\p_i \neq \p_j$ for $i \neq j$.
Then we have 
${\rm Ann}_{\Lam}(X)= ({\rm char}_{\Lam}(X))$.
\end{itemize}
\end{lem}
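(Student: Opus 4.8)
The plan is to prove (i) first and then to derive (ii) from it using the fact that $\Lam$ is a unique factorization domain.

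For (i), I regard $X$ as a submodule of $E$ via the given injection, so that $E/X \isom C$. The inclusion ${\rm Ann}_{\Lam}(E) \subseteq {\rm Ann}_{\Lam}(X)$ is immediate. For the reverse inclusion I take $\alpha \in {\rm Ann}_{\Lam}(X)$ and consider multiplication by $\alpha$ on $E$. Since $\alpha$ kills $X$, this map factors through the surjection $E \to E/X \isom C$, so its image $\alpha E$ is a quotient of $C$, hence pseudo-null (a quotient of a pseudo-null module is annihilated by the same pair of coprime elements, so it is again pseudo-null). But $\alpha E$ is a submodule of $E$, and by hypothesis $E$ has no non-trivial pseudo-null submodule; therefore $\alpha E = 0$, i.e. $\alpha \in {\rm Ann}_{\Lam}(E)$. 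This yields ${\rm Ann}_{\Lam}(X) = {\rm Ann}_{\Lam}(E)$.

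For (ii), I first check that the hypotheses of (i) remain in force: $C$ is still pseudo-null, and for $E = \bigop_i \Lam/\p_i^{n_i}$ the associated primes are exactly $\p_1,\ldots,\p_s$, all of height one, so $E$ has no non-trivial pseudo-null submodule. Hence (i) applies and ${\rm Ann}_{\Lam}(X) = {\rm Ann}_{\Lam}(E)$. Next I use that $\Lam$, being $\Zp[[S]]$ or $\Zp[[S,T]]$ and thus a regular local ring, is a UFD, so every height one prime is principal: write $\p_i = (f_i)$ with $f_i$ irreducible and the $f_i$ pairwise non-associate. Then ${\rm Ann}_{\Lam}(E) = \bigcap_i {\rm Ann}_{\Lam}(\Lam/\p_i^{n_i}) = \bigcap_i (f_i^{n_i})$. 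Since the $f_i$ are pairwise coprime in the UFD $\Lam$, divisibility by each $f_i^{n_i}$ is equivalent to divisibility by their product, so $\bigcap_i (f_i^{n_i}) = (\prod_i f_i^{n_i})$. On the other hand $\prod_i f_i^{n_i}$ is a characteristic power series of $E$, and as $C$ is pseudo-null the exact sequence gives ${\rm char}_{\Lam}(X) = {\rm char}_{\Lam}(E) = \prod_i f_i^{n_i}$ up to a unit. Combining, ${\rm Ann}_{\Lam}(X) = {\rm Ann}_{\Lam}(E) = (\prod_i f_i^{n_i}) = ({\rm char}_{\Lam}(X))$.

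The main obstacle I anticipate is the passage from $\bigcap_i \p_i^{n_i}$ to $\prod_i \p_i^{n_i}$. Because the $\p_i$ have height one and are therefore not comaximal in the $2$- or $3$-dimensional ring $\Lam$, one cannot invoke the Chinese remainder theorem, and in general the intersection of the $\p_i^{n_i}$ need not even be principal. It is precisely the UFD property of $\Lam$—equivalently, the principality of height one primes—that makes the intersection coincide with the product and hence principal, and this is the crux that forces the annihilator to equal the characteristic ideal.
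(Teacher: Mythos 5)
Your proposal is correct and takes essentially the same route as the paper: for (i) the paper likewise shows that $fE$ is pseudo-null and then invokes the absence of non-trivial pseudo-null submodules of $E$ (it gets $fE \isom fC$ from the snake lemma applied to the reduction-mod-$f$ diagram, whereas you factor multiplication by $f$ through $E/X \isom C$ directly, which is the same idea). For (ii) the paper merely asserts that ${\rm Ann}_{\Lam}(E)=({\rm char}_{\Lam}(X))$ "in this case"; the UFD argument you give (principality of the height one primes, intersection of powers of pairwise coprime irreducibles equals their product, and ${\rm char}_{\Lam}(X)={\rm char}_{\Lam}(E)$ because $C$ is pseudo-null) is precisely the omitted verification.
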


\begin{proof}
(i) 
The inclusion 
${\rm Ann}_{\Lam}(X) \supset {\rm Ann}_{\Lam}(E)$
is obvious. 
We will show the other inclusion.
Let $f \in \Lam$.
Then we have an exact sequence
$$
0 \to X[f] \to E[f] \to C[f] \to X/f \to E/f \to C/f \to 0.
$$
Suppose that $f \in {\rm Ann}_{\Lam}(X)$.
Then we have $X/f=X$, hence we have an commutative diagram
$$
\begin{CD}
0 @>>> X @>>> E @>>> C @>>> 0
\\
 & & @|  @VVV  @VVV  
\\
0 @>>> X @>>> E/f @>>> C/f @>>> 0.
\end{CD}
$$
By snake lemma, we have 
$0 \to fE \to fC \to 0$, hence $fE$ is a pseudo-null $\Lam$-module.
Since $E$ has no non-trivial pseudo-null $\Lam$-submodules, we have $fE = 0$.
Thus we have  $f \in {\rm Ann}_{\Lam}(E)$, hence we have proved ${\rm Ann}_{\Lam}(X) \subset {\rm Ann}_{\Lam}(E)$.
\\
(ii) 
This follows easily since ${\rm Ann}_{\Lam}(E) = ({\rm char}_{\Lam}(X))$ in this case.
\end{proof}

Let $\mathfrak{X}_{\P}(\tilde{K})$ be the Galois group of the maximal $p$-extension of $\tilde{K}$ unramified outside all primes above $\P$.
Put
$$
\mathcal{F}_{\P}(S,T):={\rm char}_{\Zp[[S,T]]}(\mathfrak{X}_{\P}(\tilde{K}))
\ \ 
{\rm and}
\ \ 
F(S):={\rm char}_{\Zp[[S]]}(X_{\Ki}) \in \Zp[[S]]
$$
for simplicity.
We may assume that $F(S)$ is a distinguished polynomial with degree $\lam$, where $\lam$ is the Iwasawa $\lam$-invariant of $\Ki/K$.
From Lemma \ref{Ozaki01 ex seq (sp case)} (i), we have the exact sequence 
\begin{eqnarray}\label{Ozaki01 ex seq (imag. quad. sp case)}
0 \to X_{\tilde{K}}/T \to X_{\Ki} \to \Zp \to 0.
\end{eqnarray}
Therefore $F(S)$ is written as $F(S)=SF^*(S)$ for some distinguished polynomial $F^*(S)$ which is coprime to $S$ with degree $\lam-1$, 
and $X_{\tilde{K}}/T$ has no nontrivial finite $\Zp[[S]]$-submodules.
On the other hand, by Fujii \cite[Lemma 3]{Fujii14}, 
$\mathfrak{X}_{\P}(\tilde{K})$ has no nontrivial pseudo-null submodules and
\begin{eqnarray}\label{mathfrak{X}/T isom tilde{X}/T}
\mathfrak{X}_{\P}(\tilde{K})/T \isom X_{\tilde{K}}/T,
\end{eqnarray}
since $K$ is an imaginary quadratic field.
Therefore $\mathfrak{X}_{\P}(\tilde{K})/T$ is $\Zp[[S]]$-torsion.
This yields that $\mathfrak{X}_{\P}(\tilde{K})[T]$ is a pseudo-null $\Zp[[S,T]]$-module by 
Perrin-Riou \cite[Chapitre I Lemme 4.2 in page 12]{Perrin-Riou84},
so that $\mathfrak{X}_{\P}(\tilde{K})[T]=0$.
Hence we have
$$
\left(\mathcal{F}_{\P}(S,0)\right)
=
\left(\f{{\rm char}_{\Zp[[S]]}(\mathfrak{X}_{\P}(\tilde{K})/T)}{{\rm char}_{\Zp[[S]]}(\mathfrak{X}_{\P}(\tilde{K})[T])}\right)
=
\left(F^*(S)\right)
$$
as ideals in $\Zp[[S]]$, again by the same lemma in \cite{Perrin-Riou84}.
By (\ref{mathfrak{X}/T isom tilde{X}/T}), 
we can take $\lam-1$ generators 
$\xi_1,\ldots, \xi_{\lam-1}$
of $\mathfrak{X}_{\P}(\tilde{K})$ as a $\Zp[[T]]$-module.
Define a matrix $A$ and $f(S,T) \in \Zp[[T]][S]$ by
\begin{eqnarray*}
S
\begin{bmatrix}
\xi_1
\\
\vdots
\\
\xi_{\lam-1}
\end{bmatrix}
=
A
\begin{bmatrix}
\xi_1
\\
\vdots
\\
\xi_{\lam-1}
\end{bmatrix}
\end{eqnarray*}
and $f(S,T)=\det(S\cdot I_{\lam-1}-A)$, respectively.
Then $f(S,T)$ annihilates $\mathfrak{X}_{\P}(\tilde{K})$, and also $X_{\tilde{K}}$.
Hence, $f(S,0)$ annihilates $X_{\tilde{K}}/T$.
Note that the degree of $f(S,T)$ with respect to $S$ is $\lam-1$.
Now we show Theorem \ref{thm 2}.

\begin{thm}{\rm (Theorem \ref{thm 2})}
Suppose that 
$L_K \subset \tilde{K}$
or $F(S)$ does not have any square factor.
Then the following equalities
$$
\mathcal{F}_{\P}(S,T)\Zp[[S,T]]=f(S,T)\Zp[[S,T]]
\ \ \text{and} \ \ 
\mathcal{F}_{\P}(S,0)\Zp[[S]]=F^*(S)\Zp[[S]]
$$
as ideals 
hold.
\end{thm}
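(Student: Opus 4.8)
The plan is to deduce both displayed equalities from the single assertion that the annihilator of $\mathfrak{X}_{\P}(\tilde{K})$ is principal and generated by its characteristic power series, namely
\[
{\rm Ann}_{\Zp[[S,T]]}(\mathfrak{X}_{\P}(\tilde{K}))=\left(\mathcal{F}_{\P}(S,T)\right).
\]
Granting this, the first equality follows at once: since $f(S,T)=\det(S\cdot I_{\lam-1}-A)$ annihilates $\mathfrak{X}_{\P}(\tilde{K})$, we get $f\in(\mathcal{F}_{\P})$, so $f=\mathcal{F}_{\P}\,h$ for some $h\in\Zp[[S,T]]$. Specializing at $T=0$ and recalling that $f(S,0)$ is the characteristic polynomial of multiplication by $S$ on the free $\Zp$-module $\mathfrak{X}_{\P}(\tilde{K})/T\isom X_{\tilde{K}}/T$ of rank $\lam-1$, both $f(S,0)$ and $\mathcal{F}_{\P}(S,0)$ generate the ideal $(F^*(S))$ of $\Zp[[S]]$ --- the latter being exactly the second displayed equality, which has already been recorded. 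Cancelling $F^*(S)$ in the domain $\Zp[[S]]$ forces $h(S,0)$ to be a unit; as $h(0,0)\in\Zp^\x$, $h$ is a unit in $\Zp[[S,T]]$, whence $(f)=(\mathcal{F}_{\P})$.

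It thus remains to establish ${\rm Ann}_{\Zp[[S,T]]}(\mathfrak{X}_{\P}(\tilde{K}))=(\mathcal{F}_{\P})$, and this is where Lemma \ref{ann and char} and the two hypotheses enter. Since $\mathfrak{X}_{\P}(\tilde{K})$ has no nontrivial pseudo-null submodules, the structure theorem yields an exact sequence $0\to\mathfrak{X}_{\P}(\tilde{K})\to E\to C\to 0$ with $E=\bigop_i\Zp[[S,T]]/\p_i^{n_i}$ and $C$ pseudo-null; by Lemma \ref{ann and char} it suffices to check the hypothesis of part (ii), that the height-one primes $\p_i$ are pairwise distinct. When $F(S)$ has no square factor this is a purely module-theoretic check: writing $\p_i=(g_i)$ we have $\mathcal{F}_{\P}(S,T)=\prod_i g_i(S,T)^{n_i}$ up to a unit, and specializing at $T=0$ gives $\prod_i g_i(S,0)^{n_i}=F^*(S)$ up to a unit. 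Each $g_i(S,0)$ is a non-unit in $\Zp[[S]]$ (an irreducible non-unit of $\Zp[[S,T]]$ cannot become a unit modulo $T$, since its constant term would then lie in $\Zp^\x$), so if some prime occurred with multiplicity, or some $n_i\ge 2$, then $F^*(S)$ --- and hence $F(S)=S\,F^*(S)$ --- would acquire a square factor, a contradiction. Hence all $\p_i$ are distinct and Lemma \ref{ann and char}(ii) applies.

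The remaining case $L_K\subset\tilde{K}$ is the one I expect to be the main obstacle, since there $\mathcal{F}_{\P}$ may genuinely have repeated factors and the squarefree argument breaks down. Here the plan is to exploit the arithmetic hypothesis to show that $\mathfrak{X}_{\P}(\tilde{K})$ is $\Zp[[S,T]]$-\emph{cyclic}: then $\mathfrak{X}_{\P}(\tilde{K})\isom\Zp[[S,T]]/I$, and the absence of nontrivial pseudo-null submodules forces the height-one unmixed ideal $I$ to be principal in the unique factorization domain $\Zp[[S,T]]$, whence ${\rm Ann}=I=({\rm char})=(\mathcal{F}_{\P})$ irrespective of multiplicities. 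The hard part is precisely this translation of the inclusion $L_K\subset\tilde{K}$ into cyclicity of $\mathfrak{X}_{\P}(\tilde{K})$: one must trace the condition through the exact sequences (\ref{gemstone tilde{K}'}) and (\ref{Ozaki01 ex seq (imag. quad. sp case)}) together with the isomorphism (\ref{mathfrak{X}/T isom tilde{X}/T}), using class field theory to exhibit a single $\Zp[[S,T]]$-generator, equivalently a cyclic vector for the action of $S$ on the generators $\xi_1,\ldots,\xi_{\lam-1}$. Once cyclicity is secured in this case, the reduction of the first paragraph completes the proof.
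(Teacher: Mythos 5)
Your first two paragraphs are sound and essentially reproduce the paper's own argument. The paper likewise reduces both equalities to the divisibility $f(S,T)\in(\mathcal{F}_{\P}(S,T))$ followed by specialization at $T=0$, and in the squarefree case it obtains this divisibility exactly as you do, via Lemma \ref{ann and char} (ii) after observing that $(\mathcal{F}_{\P}(S,0))=(F^*(S))$ forces the height-one primes in the elementary module to be distinct. The only cosmetic difference is how $(f(S,0))=(F^*(S))$ is justified: the paper uses that $f(S,0)$ is monic of degree $\lam-1$ in $S$ and divisible by $F^*(S)$, while you identify $f(S,0)$ as the characteristic polynomial of $S$ acting on the $\Zp$-free module $\mathfrak{X}_{\P}(\tilde{K})/T\isom X_{\tilde{K}}/T$; both are fine, as is your treatment of the second displayed equality as already recorded (the paper's proof also takes it from the preliminary Perrin-Riou argument).

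The genuine gap is the case $L_K\subset\tilde{K}$. You correctly identify that the needed input is the $\Zp[[S,T]]$-cyclicity of $\mathfrak{X}_{\P}(\tilde{K})$, but you do not prove it: you explicitly defer it as ``the hard part,'' and your sketch (tracing class field theory to exhibit a cyclic vector) is not an argument. In the paper this step is short and is in fact the easy case. If $L_K\subset\tilde{K}$, then $\tilde{K}$ is the maximal abelian $p$-extension of $K$ unramified outside $p$, whence $X_{\Ki}/S\isom\Zp$. Applying the snake lemma for multiplication by $S$ to (\ref{Ozaki01 ex seq (imag. quad. sp case)}), i.e.\ to $0\to X_{\tilde{K}}/T\to X_{\Ki}\to\Zp\to 0$, gives an exact sequence $\Zp\to X_{\tilde{K}}/(S,T)\to X_{\Ki}/S\to\Zp\to 0$; since the resulting surjection $X_{\Ki}/S\isom\Zp\to\Zp$ is injective, $X_{\tilde{K}}/(S,T)$ is a quotient of $\Zp$, so $X_{\tilde{K}}$ is $\Zp[[S,T]]$-cyclic by Nakayama's lemma, and then the isomorphism (\ref{mathfrak{X}/T isom tilde{X}/T}) together with Nakayama transfers cyclicity to $\mathfrak{X}_{\P}(\tilde{K})$. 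Note also that once cyclicity is in hand, the paper concludes more directly than your fallback: the surjection $\Zp[[S,T]]/(f(S,T))\surj\mathfrak{X}_{\P}(\tilde{K})$ immediately yields $\mathcal{F}_{\P}(S,T)\mid f(S,T)$ by multiplicativity of characteristic ideals in exact sequences, with no need for your (valid, but more elaborate) claim that the annihilator ideal of a cyclic module without pseudo-null submodules is principal. So the skeleton of your proposal is right, but as written it establishes the theorem only under the squarefree hypothesis, not under $L_K\subset\tilde{K}$.
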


\begin{proof}
First, suppose that $F(S)$ does not have any square root.
Combining the equation $(F^*(S))=(\mathcal{F}_{\P}(S,0))$ with this assumption, 
we see that $\mathcal{F}_{\P}(S,T)$ does not have any square factor.
By Lemma \ref{ann and char} (ii), 
$$
f(S,T)=\mathcal{F}_{\P}(S,T)g(S,T)
\ \ 
{\rm and}
\ \ 
F^*(S) \mid f(S,0)
$$
for some $g(S,T) \in \Zp[[S,T]]$.
Since $\deg F^*(S)=\lam-1=\deg f(S,0)$ and $f(S,0)$ is monic, we obtain that
$F^*(S)$ is equal to $f(S,0)=\mathcal{F}_{\P}(S,0)g(S,0)$ up to multiplication by unit.
Hence, $g(S,T) \in \Zp[[S,T]]^\x$, which yields the claim.

Next, suppose that $L_K \subset \tilde{K}$.
Then it is known that $\tilde{K}$ coincides with the maximal abelian $p$-extension of $K$ which is unramified all primes outside above $p$.
Hence $X_{\Ki}/S \isom \Zp$.
Applying the snake lemma to (\ref{Ozaki01 ex seq (imag. quad. sp case)}),
we obtain an exact sequence
$\Zp \to X_{\tilde{K}}/(S,T) \to X_{\Ki}/S \to \Zp \to 0$.
Hence $X_{\tilde{K}}$ is $\Zp[[S,T]]$-cyclic and so $\mathfrak{X}_{\P}(\tilde{K})$ is.
This means that there are surjections 
$\Zp[[S,T]]/f(S,T) \surj \mathfrak{X}_{\P}(\tilde{K})$
and
$\Zp[[S]]/f(S,0) \surj X_{\tilde{K}}/T$.
So, in the same way as above, we obtain the claim.
\end{proof}


\section{Proof of Theorem \ref{thm 3}}\label{Proof of Thm 3}
\subsection{A system of generators of $X_{\tilde{K}}$}\label{a system of generators}

First of all, we show a lemma from group theory.
Let $p$ be an arbitrary prime number and $G$ a finite abelian $p$-group such that $\dim_{\Fp}G/p=g$.
For $x \in G$ with order $p^n$, $\langle x \rangle_n$ denotes the cyclic group generated by $x$
(we also use the notation $\langle x \rangle$ instead of $\langle x \rangle_n$).

\begin{lem}\label{generators lem A}
With the notation as above, let $H$ be a subgroup of $G$ such that $G/H$ is cyclic, then
there exists a minimal system of generators $x_1,x_2 \ldots,x_g \in G$ such that the following holds:
{
\makeatletter
  \parsep   = 0pt
  \labelsep = .5pt
  \def\@listi{%
     \leftmargin = 20pt \rightmargin = 0pt
     \labelwidth\leftmargin \advance\labelwidth-\labelsep
     \topsep     = 0\baselineskip
     \partopsep  = 0pt \itemsep       = 0pt
     \itemindent = 0pt \listparindent = 10pt}
  \let\@listI\@listi
  \@listi
  \def\@listii{%
     \leftmargin = 20pt \rightmargin = 0pt
     \labelwidth\leftmargin \advance\labelwidth-\labelsep
     \topsep     = 0pt \partopsep     = 0pt \itemsep   = 0pt
     \itemindent = 0pt \listparindent = 10pt}
  \let\@listiii\@listii
  \let\@listiv\@listii
  \let\@listv\@listii
  \let\@listvi\@listii
  \makeatother
\ 
\begin{itemize}
\item[{\rm (i)}]
$G/H$ is generated by the image of $x_1$ under the projection. 
Moreover, $x_1$ has the minimum order among such elements.
\item[{\rm (ii)}]
$x_2,\ldots, x_g \in H$, if $g \ge 2$.
\end{itemize}
}
\end{lem}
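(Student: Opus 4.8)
The plan is to derive everything from the structure theorem for finite abelian $p$-groups together with a single elementary ``unipotent'' change of generators. Throughout I will assume $G/H \neq 0$, say $G/H \isom \Z/p^m\Z$ with $m \geq 1$ (when $H = G$ the assertion degenerates and is trivial). First I would fix a decomposition $G = \bigoplus_{i=1}^g \langle e_i\rangle$ into cyclic summands, where $e_i$ has order $p^{a_i}$; the number of summands is exactly $g = \dim_{\Fp} G/pG$. I then choose a surjection $\phi \colon G \to \Z/p^m\Z$ with $\Ker \phi = H$ and set $c_i := \phi(e_i)$. An element $x \in G$ has image generating $G/H$ precisely when $\phi(x)$ is a unit of $\Z/p^m\Z$, equivalently $\phi(x) \not\equiv 0 \pmod p$; and because $\phi$ is onto a cyclic group, at least one $c_i$ must be a unit.

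Next I would pin down the minimal order of a lift of a generator. Let $i_0$ be an index minimizing $a_i$ subject to $c_i$ being a unit, and put $x_1 := e_{i_0}$, an element of order $p^{a_{i_0}}$ mapping to a generator of $G/H$. For an arbitrary $x = \sum_i b_i e_i$ with $\phi(x)$ a unit, reducing $\phi(x) = \sum_i b_i c_i$ modulo $p$ shows that some index $i$ satisfies $p \nmid b_i$ and $c_i$ a unit; for that $i$ the component $b_i e_i$ has order $p^{a_i}$, and since the order of $x$ is the least common multiple of the orders of its components we get $\mathrm{ord}(x) \geq p^{a_i} \geq p^{a_{i_0}}$. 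Hence $x_1$ realizes the minimum order among all lifts of generators of $G/H$, which is exactly statement (i).

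Finally I would correct the remaining generators into $H$. Since $c_{i_0}$ is a unit, for each $j \neq i_0$ I pick an integer $\lambda_j$ with $\lambda_j \equiv c_j c_{i_0}^{-1} \pmod{p^m}$ and set $x_j := e_j - \lambda_j e_{i_0}$; then $\phi(x_j) = c_j - \lambda_j c_{i_0} = 0$, so $x_j \in H$, giving (ii). The collection $\{x_1\} \cup \{x_j : j \neq i_0\}$ still generates $G$, because every original generator is recovered as $e_j = x_j + \lambda_j x_1$ and $e_{i_0} = x_1$; as this collection consists of exactly $g$ elements it is automatically a minimal system of generators (any generating set of cardinality $\dim_{\Fp} G/pG$ is minimal). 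Reordering so that $x_1$ is first yields the desired $x_1, \ldots, x_g$.

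The step demanding the most care is the second one: I must verify that the minimal order is genuinely governed by the single summand $\langle e_{i_0}\rangle$ and cannot be accidentally lowered by some clever combination spread across several summands. The reduction-modulo-$p$ argument is exactly what closes this gap, since a unit value of $\phi(x)$ forces at least one summand to contribute a component of order $p^{a_i}$ with $a_i \geq a_{i_0}$. Everything else is the routine observation that subtracting multiples of the distinguished generator $e_{i_0}$ is an invertible modification of the generating set and therefore preserves both generation and minimality.
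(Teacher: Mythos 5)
Your proof is correct, and it takes a genuinely different route from the paper's, although both start from the structure theorem for finite abelian $p$-groups. The paper never introduces your surjection $\phi \colon G \to \Z/p^m\Z$; instead it sorts the cyclic summands by the descending chain of subgroups of $G/H$ generated by their images, pushes generators into $H$ in two stages (first within each level of the chain, trading each redundant summand generator $x_{ij}$ for an element $h_{ij}\in H$; then across levels, replacing each lower-level $x_i$ by $h_i\in H$), and only at the very end addresses the minimal-order clause, via a substitution argument: for an element $x_1'$ of minimal order whose image generates $G/H$, expanding $x_1'$ in the constructed generators shows that the coefficient of $x_1$ is divisible by $p^{m_1}$, so $x_1'$ can replace $x_1$ without destroying generation. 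You work in the opposite order: your reduction-mod-$p$ step (a unit value of $\phi(x)=\sum_i b_ic_i$ forces $p\nmid b_i$ with $c_i$ a unit for some $i$) shows that the specifically chosen summand generator $e_{i_0}$ \emph{already} attains the minimum order, after which (ii) follows from the single unipotent substitution $x_j = e_j - \lambda_j e_{i_0}$. What each approach buys: yours is shorter and identifies the minimal order explicitly as $\min\{p^{a_i} : c_i \ \text{a unit}\}$, whereas the paper's substitution argument yields the stronger by-product that \emph{any} element whose image generates $G/H$ (in particular any one of minimal order) can be completed by the same fixed $H$-elements to a minimal generating system --- a flexibility the paper quietly exploits later, in the remark of \S\ref{a system of generators} that $x_1$ may be changed modulo $(x_2,\ldots,x_g)\Zp[[S]]$. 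One small caveat: your dismissal of the case $H=G$ as ``trivial'' is not quite the right reason --- in that case condition (i) would force $x_1=0$, which can never belong to a minimal generating system of a nontrivial $G$, so the lemma is really only meaningful when $G/H\neq 0$; but that is precisely the situation in which the paper applies it (the standing assumption $m>0$), and the paper's own proof tacitly assumes it as well, so excluding it as you do is the correct move.
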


\begin{proof}
We denote the image of $x \in G$ in $G/H$ by $\overline{x}$.
First, we take a direct sum decomposition
$$
G=
\bigop_{j=1}^{g_1}\langle x_{1j} \rangle_{n_{1j}}
\op
\bigop_{j=1}^{g_2}\langle x_{2j} \rangle_{n_{2j}}
\op
\cdots
\op
\bigop_{j=1}^{g_r}\langle x_{rj} \rangle_{n_{rj}}
\ \ 
(g_1+g_2+\cdots +g_r=g)
$$
with
$n_{i1} \le n_{i2} \le \ldots \le n_{ig_i}$ for each $1 \le i \le r$
and
\begin{eqnarray*}
G/H
&=&
\langle \overline{x_{11}} \rangle_{m_1} =\cdots= \langle \overline{x_{1g_1}} \rangle_{m_1}
\\
&\supsetneq&
\langle \overline{x_{21}} \rangle_{m_2} =\cdots= \langle \overline{x_{2g_2}} \rangle_{m_2}
\\
&\supsetneq&
\cdots
\\
&\supsetneq&
\langle \overline{x_{r1}} \rangle_{m_r} =\cdots= \langle \overline{x_{rg_r}} \rangle_{m_r}
=
\langle \overline{0} \rangle
\end{eqnarray*}
in $G/H$.
For simplicity, we put $x_i:=x_{i1}$ and $n_i:=n_{i1}$ $(1 \le i \le r)$.
For any $i$ and $j$ with $1 \le i \le r$ and $2 \le j \le g_i$,
there exist $h_{ij} \in H$ and $p \nmid a_{ij}$
such that
$
x_i=h_{ij}+a_{ij}x_{ij},
$
since $\langle \overline{x_{i}} \rangle_{m_i}=\langle \overline{x_{ij}} \rangle_{m_i}$.
Then, we can easily check that
$
\langle x_{i} \rangle_{n_{1}} \op \langle x_{ij} \rangle_{n_{ij}}
=
\langle x_{i} \rangle_{n_{1}} \op \langle h_{ij} \rangle_{n_{ij}}
$.
Hence, we may change $\langle x_{ij} \rangle_{n_{ij}}$ in the above decomposition with $\langle h_{ij} \rangle_{n_{ij}}$.
Therefore, by changing the names appropriately, we obtain a direct sum decomposition
$$
G=
\langle x_{1} \rangle_{n_{1}}
\op \cdots \op
\langle x_{r} \rangle_{n_{r}}
\op
\bigop_{h \in H'} \langle h \rangle
$$
for some subset $H'$ in $H$.
Next, for any $i$ with $2 \le i \le r$, 
there exist $h_{i} \in H$ and $p \mid a_i$
such that
$
x_i=h_{i}+a_{i}x_{1},
$
since $\langle \overline{x_{1}} \rangle_{m_1} \supsetneq \langle \overline{x_{i}} \rangle_{m_i}$.
Then
$$
G=
\langle x_{1} \rangle
+
\langle h_2,\ldots,h_r \rangle
+
\bigop_{h \in H'} \langle h \rangle
$$
and elements in $\bigcup_{i=2}^r \{ h_i\} \cup H'$ are linearly independent mod $pG$.
Finally, we show that the above $x_1$ can be taken such that it has the minimum order among the elements whose images generate $G/H$.
Let $x_1' \in G$ be such an element.
Then we have $x_1'=h'+a'x_1$ for some $h' \in H$ and $p \nmid a'$.
We also obtain 
$h'=b_1x_1+\sum_{i=2}^rb_ih_i+\sum_{h \in H'}b_h h$
for some $b_1, \ldots b_r, h_h \in \Z$.
Then $b_1x_1 \in \langle x_{1} \rangle \cap H$, so that $p^{m_1} \mid b_1$.
Since $x_1'$ has the form $x_1'=(a'+b_1)x_1+\sum_{i=2}^rb_ih_i+\sum_{h \in H'}b_h h$,
we obtain that
$
G=
\langle x_{1}' \rangle
+
\langle h_2,\ldots,h_r \rangle
+
\bigop_{h \in H'} \langle h \rangle
$.
\end{proof}


Let $p$ be an odd prime number, $K$ an imaginary quadratic field such that $p$ does not split.
We use the notation in \S \ref{notation} for such $p$ and $K$, $\tilde{K}$, $\Ki$, $X_{\tilde{K}}$, etc.
Put $g:=\dim_{\Fp} (A_K/p)$.
We use Ozaki's exact sequence in the case where $p$ does not split:

\begin{lem}{\rm (\cite[Lemma 1]{Ozaki01})}\label{Ozaki01 ex seq (non-sp case)}
There is an exact sequence of $\Zp [[S]]$-modules
$$
0 \to X_{\tilde{K}}/T \to X_{\Ki} \to \Gal(L_{\Ki} \cap \tilde{K}/\Ki) \to 0.
$$
\end{lem}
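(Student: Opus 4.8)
The plan is to mirror the proof of Lemma~\ref{Ozaki01 ex seq (sp case)}~(i): I would run the Hochschild--Serre five-term exact sequence (in homology) for a suitable group extension over $\Ki$, and then correct for the fact that, unlike $\tilde{K}_\chi/\Ki$ in the split case, the extension $\tilde{K}/\Ki$ is ramified above $p$. First I would record the relevant extension. Because $L_{\tilde{K}}$ is the maximal unramified abelian $p$-extension of $\tilde{K}$, it is Galois over $K$ (hence over $\Ki$) by maximality, and $X_{\tilde{K}}=\Gal(L_{\tilde{K}}/\tilde{K})$ is an abelian normal subgroup of $\Gal(L_{\tilde{K}}/\Ki)$. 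Writing $G:=\Gal(\tilde{K}/\Ki)\isom\Zp$ (the factor carrying the variable $T$), this gives the extension $1 \to X_{\tilde{K}} \to \Gal(L_{\tilde{K}}/\Ki) \to G \to 1$.

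Next I would feed this into the five-term homology sequence with trivial $\Zp$-coefficients. The simplification over the split case is that $G\isom\Zp$ has $p$-cohomological dimension one, so $H_2(G,\Zp)=\wedge^2_{\Zp}\Zp=0$, and the sequence collapses to
$$
0 \to X_{\tilde{K}}/T \to \Gal(L_{\tilde{K}}/\Ki)\ab \to G \to 0 ,
$$
that is, $0 \to X_{\tilde{K}}/T \to \Gal(M/\Ki) \to G \to 0$, where $M$ is the maximal subextension of $L_{\tilde{K}}/\Ki$ abelian over $\Ki$. The left-hand term is identified with $\Gal(M/\tilde{K})=(X_{\tilde{K}})_{G}=X_{\tilde{K}}/TX_{\tilde{K}}$, since the closure of the commutator subgroup of $\Gal(L_{\tilde{K}}/\Ki)$ equals $TX_{\tilde{K}}$ once $H_2(G,\Zp)$ vanishes. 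In the split case one had directly $\Gal(L_{\tilde{K}_\chi}/\Ki)\ab = X_{\Ki}$ because $\tilde{K}_\chi/\Ki$ was unramified; here $\Gal(M/\Ki)$ is genuinely larger, and the work is to cut it down.

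The main point is therefore a ramification analysis converting $\Gal(M/\Ki)$ into $X_{\Ki}$. Since $M\subseteq L_{\tilde{K}}$, the extension $M/\tilde{K}$ is everywhere unramified, while $M/\Ki$ is unramified outside the unique prime $\p$ above $p$; this is where the hypothesis that $p$ does not split enters, as there is a single prime to control. I would let $I_{\p}\subseteq\Gal(M/\Ki)$ be the inertia group of $\p$, with image $I_{\p}'\subseteq G$. Because $M/\tilde{K}$ is unramified, $I_{\p}\cap\Gal(M/\tilde{K})=1$, so $I_{\p}$ injects into $G$. The fixed field $M^{I_{\p}}$ is the maximal everywhere-unramified abelian $p$-subextension of $M/\Ki$, and I would verify $M^{I_{\p}}=L_{\Ki}$ by two inclusions: $M^{I_{\p}}\subseteq L_{\Ki}$ is immediate, while $L_{\Ki}\subseteq M$ follows from $L_{\Ki}\cdot\tilde{K}\subseteq L_{\tilde{K}}$ being abelian over $\Ki$, giving $L_{\Ki}\subseteq M^{I_{\p}}$. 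Hence $\Gal(M/\Ki)/I_{\p}\isom X_{\Ki}$. Dividing the displayed three-term sequence by $I_{\p}$, the injectivity of $X_{\tilde{K}}/T\to X_{\Ki}$ is exactly $I_{\p}\cap\Gal(M/\tilde{K})=1$, and the cokernel is $G/I_{\p}'$, which is $\Gal((L_{\Ki}\cap\tilde{K})/\Ki)$ because $L_{\Ki}\cap\tilde{K}$ is the maximal subextension of $\tilde{K}$ unramified over $\Ki$, i.e. the fixed field of the inertia of $\p$ in $\tilde{K}/\Ki$.

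I expect the inertia bookkeeping to be the main obstacle: one must confirm that $I_{\p}$ meets $\Gal(M/\tilde{K})$ trivially, that its image in $G$ is precisely the inertia of $\p$ in $\tilde{K}/\Ki$, and that every map is $\Zp[[S]]$-linear (guaranteed by Galois-ness over $K$), so that the final four-term exact sequence is one of $\Zp[[S]]$-modules as claimed.
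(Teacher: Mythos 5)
Your proposal is correct and takes essentially the same approach as the source: the paper does not reprove this lemma (it quotes Ozaki's Lemma 1), but that argument---and the paper's own proof of the split-case analogue, Lemma~\ref{Ozaki01 ex seq (sp case)}~(i)---is exactly your combination of the five-term Hochschild--Serre sequence for $1\to X_{\tilde{K}}\to \Gal(L_{\tilde{K}}/\Ki)\to \Gal(\tilde{K}/\Ki)\to 1$ (with $H_2(\Zp,\Zp)=0$, so the maximal abelian subextension $M/\Ki$ has $\Gal(M/\tilde{K})=X_{\tilde{K}}/T$) together with the inertia analysis identifying $\Gal(M/\Ki)/I_{\p}$ with $X_{\Ki}$ and the image of $I_{\p}$ in $\Gal(\tilde{K}/\Ki)$ with the inertia of $\p$ there, whose fixed field is $L_{\Ki}\cap\tilde{K}$. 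The bookkeeping you flag does go through: since $p$ is non-split in the imaginary quadratic field $K$, the local field at $p$ is quadratic over $\Qp$ and hence linearly disjoint from the local cyclotomic $\Zp$-extension, so $\Ki$ has a unique prime above $p$, making $I_{\p}$ well-defined, stable under conjugation by $\Gal(\Ki/K)$ (hence all maps $\Zp[[S]]$-linear), and meeting $\Gal(M/\tilde{K})$ trivially because $M\subseteq L_{\tilde{K}}$ is unramified over $\tilde{K}$.
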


\begin{lem}\label{an isom1}
There is a canonical isomorphism
$$
\Gal(L_{\Ki} \cap \tilde{K}/\Ki) \isom \Gal(L_K \cap \tilde{K}/K).
$$
\end{lem}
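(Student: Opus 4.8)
The plan is to realize both Galois groups as quotients of $\Gal(\tilde{K}/K)$ by inertia subgroups, and then to check that the natural restriction map between these quotients is a bijection.

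First I would fix notation: set $G:=\Gal(\tilde{K}/K)$ and $H:=\Gal(\tilde{K}/\Ki)\subset G$. Since $p$ does not split in $K$, there is a unique prime $\p$ of $K$ above $p$, and because every $\Zp$-extension is unramified outside $p$, the extension $\tilde{K}/K$ is ramified only at $\p$. Let $I_\p\subset G$ be the (closed) inertia subgroup of $\p$. As $\tilde{K}/K$ is abelian and ramified only at $\p$, a subfield $M\subset\tilde{K}$ lies in $L_K$ exactly when $M/K$ is unramified, so $L_K\cap\tilde{K}$ is the maximal subextension of $\tilde{K}/K$ unramified over $K$, namely the fixed field $\tilde{K}^{I_\p}$. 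Hence $\Gal(\tilde{K}/(L_K\cap\tilde{K}))=I_\p$ and $\Gal((L_K\cap\tilde{K})/K)\isom G/I_\p$.

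Next I would run the same argument one level up. The key preliminary fact is that $\Ki/K$ is totally ramified at $\p$: locally $\Ki$ is $K_\p\cdot\Qp^{\mathrm{cyc}}$, and since $p$ is odd we have $[K_\p:\Qp]\le 2<p$, which forces $K_\p\cap\Qp^{\mathrm{cyc}}=\Qp$, so the inertia of $\p$ in $\Gal(\Ki/K)\isom\Zp$ is the whole group. In particular $\Ki$ has a unique prime $\p_\infty$ above $p$, and $\tilde{K}/\Ki$ is ramified only at $\p_\infty$. Writing $I'\subset H$ for the inertia of $\p_\infty$, the same reasoning gives $L_{\Ki}\cap\tilde{K}=\tilde{K}^{I'}$ and $\Gal((L_{\Ki}\cap\tilde{K})/\Ki)\isom H/I'$. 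Functoriality of inertia in the tower $K\subset\Ki\subset\tilde{K}$ then yields $I'=I_\p\cap H$.

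Finally I would analyze the restriction map $H\to\Gal((L_K\cap\tilde{K})/K)=G/I_\p$ sending $\sigma$ to $\sigma|_{L_K\cap\tilde{K}}$, which is well defined since every $\sigma\in H$ fixes $\Ki\supseteq K$. Its kernel is $H\cap I_\p=I'$, and its image is $\Gal((L_K\cap\tilde{K})/((L_K\cap\tilde{K})\cap\Ki))$. Because $\Ki/K$ is totally ramified at $\p$ and ramified nowhere else, the only subextension of $\Ki/K$ unramified over $K$ is $K$ itself, so $(L_K\cap\tilde{K})\cap\Ki=K$ and the map is surjective. Thus restriction induces a canonical isomorphism $H/I'=\Gal((L_{\Ki}\cap\tilde{K})/\Ki)\isom G/I_\p=\Gal((L_K\cap\tilde{K})/K)$, which is the asserted isomorphism. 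The steps most in need of care are the identification of each intersection field as the fixed field of the appropriate inertia group and the equality $(L_K\cap\tilde{K})\cap\Ki=K$ coming from total ramification; the inertia functoriality $I'=I_\p\cap H$ is standard and should cause no difficulty.
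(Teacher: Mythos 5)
Your proof is correct, and it takes a genuinely different route from the paper's. You work entirely inside $G=\Gal(\tilde{K}/K)$: both groups are realized as inertia quotients, $L_K\cap\tilde{K}=\tilde{K}^{I_\p}$ and $L_{\Ki}\cap\tilde{K}=\tilde{K}^{I'}$ with $I'=I_\p\cap H$ for $H=\Gal(\tilde{K}/\Ki)$, and the isomorphism is induced by the restriction map $H\to G/I_\p$, whose kernel is $I'$ and whose surjectivity you extract from the total ramification of $\Ki/K$ at $\p$. The paper instead works inside $L_{\Ki}$: it introduces the maximal abelian subextension $L'$ of $L_{\Ki}/K$, proves $L'=\Ki L_K$, deduces the field identity $L_{\Ki}\cap\tilde{K}=\Ki(L_K\cap\tilde{K})$ (equation (\ref{an isom2})) by a double inclusion, and only then passes to Galois groups by restriction. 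Both arguments rest on the same two inputs --- $\tilde{K}/K$ is unramified outside the unique prime $\p$ above $p$, and $\Ki/K$ is totally ramified at $\p$ --- but they buy slightly different things: the paper's version yields the explicit compositum description of $L_{\Ki}\cap\tilde{K}$, which is more information than the bare isomorphism of groups, whereas yours is more economical, stays inside $\tilde{K}$, avoids the auxiliary field $L'$ and the compositum manipulations, and makes the role of total ramification completely explicit.

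One step of yours should be tightened. From $K_\p\cap\Qp^{\mathrm{cyc}}=\Qp$ alone you only get that the decomposition group of $\p$ is all of $\Gal(\Ki/K)$; full inertia does not follow from a trivial intersection in general, since composita of ramified extensions can create unramified subextensions. What saves you is exactly the coprimality you invoke: $\Qp^{\mathrm{cyc}}/\Qp$ is totally ramified and $e(K_\p/\Qp)\le 2$ is prime to $p$, so multiplicativity of ramification indices forces $K_\p\Qp^{\mathrm{cyc}}/K_\p$ to be totally ramified as well, whence the inertia group is everything. This is a one-line repair, not a gap; note that the paper needs the same fact implicitly, since its final restriction isomorphism requires $(L_K\cap\tilde{K})\cap\Ki=K$.
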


\begin{proof}
Let $L'$ be the maximal abelian subextension in $L_{\Ki}/K$.
We see that $L'=\Ki L_K$, 
since $L_K$ is the fixed field by the inertia group of a prime lying above $p$ in $\Gal(L'/K)$ and $L'/\Ki L_K$ is unramified.
If we show that
\begin{eqnarray}\label{an isom2}
L_{\Ki} \cap \tilde{K} =\Ki(L_K \cap \tilde{K}),
\end{eqnarray}
then we obtain
$\Gal(L_{\Ki} \cap \tilde{K}/\Ki)
=
\Gal(\Ki(L_K \cap \tilde{K})/\Ki)
\isom
\Gal(L_K \cap \tilde{K}/K).
$
Let us show (\ref{an isom2}).
We have 
$L_K \cap \tilde{K} \subset L_K \subset \Ki L_K \subset L_{\Ki}$, so that 
\begin{eqnarray}\label{subset1}
\Ki (L_K \cap \tilde{K}) \subset L_{\Ki} \cap \tilde{K}.
\end{eqnarray}
Since $L_{\Ki} \cap \tilde{K}$ is abelian over $K$ and unramified over $\Ki$, 
we see that
$L_{\Ki} \cap \tilde{K} \subset L'=\Ki L_K$.
This yields 
\begin{eqnarray}\label{subset2}
L_{\Ki} \cap \tilde{K} \subset (\Ki L_K) \cap \tilde{K} \subset \Ki (L_K \cap \tilde{K}).
\end{eqnarray}
The claim follows from (\ref{subset1}) and (\ref{subset2}).
\end{proof}

Since $X_{\Ki}/(p,S) \isom A_K/p$,
\begin{eqnarray*}\label{ineq of generators}
g-1 \le \dim_{\Fp} X_{\tilde{K}}/(p,S,T) \le g+1
\end{eqnarray*}
by Lemmas \ref{Ozaki01 ex seq (non-sp case)} and \ref{an isom1}.
Put 
$$
p^m:=\# \Gal(L_K \cap \tilde{K}/K).
$$
Now, we show Theorem \ref{thm 3} (i).
If $m=0$, i.e., $L_K \cap \tilde{K}=K$, then 
$X_{\tilde{K}}/T \isom X_{\Ki}$,
so that
$
\dim_{\Fp} (X_{\tilde{K}}/(p,S,T))
=g.
$
This completes the proof of Theorem \ref{thm 3} (i).
In the following, we assume that $m>0$.

We fix certain generators $x_1,\ldots, x_g$ of $X_{\Ki}$ as a $\Zp[[S]]$-module as follows.
Applying Lemma \ref{generators lem A} to $\Gal(L_K/K)$ and its quotient $\Gal(L_K \cap \tilde{K}/K)$,
we can choose the basis of $\Gal(L_K/K)$ which satisfies the conditions in Lemma \ref{generators lem A}.
Moreover, 
%
%
applying Nakayama's lemma to this basis,
then we can choose $x_1,\ldots, x_g$ in $X_{\Ki}$ which satisfy the conditions bellow.
{
\makeatletter
  \parsep   = 0pt
  \labelsep = .5pt
  \def\@listi{%
     \leftmargin = 20pt \rightmargin = 0pt
     \labelwidth\leftmargin \advance\labelwidth-\labelsep
     \topsep     = 0\baselineskip
     \partopsep  = 0pt \itemsep       = 0pt
     \itemindent = 0pt \listparindent = 10pt}
  \let\@listI\@listi
  \@listi
  \def\@listii{%
     \leftmargin = 20pt \rightmargin = 0pt
     \labelwidth\leftmargin \advance\labelwidth-\labelsep
     \topsep     = 0pt \partopsep     = 0pt \itemsep   = 0pt
     \itemindent = 0pt \listparindent = 10pt}
  \let\@listiii\@listii
  \let\@listiv\@listii
  \let\@listv\@listii
  \let\@listvi\@listii
  \makeatother
\ 
\begin{itemize}
\item
$x_1,\ldots, x_g$ generate $X_{\Ki}$.
\item
The image of $x_1$ generates $\Gal(L_K \cap \tilde{K}/K)$.
Also the images of $x_2,\ldots, x_g$
become $0$ in $\Gal(L_K \cap \tilde{K}/K)$.
\item
Moreover, if 
$\Gal(L_K/K) \isom \Gal(L_K \cap \tilde{K}/K) \op \Gal(L_K/L_K \cap \tilde{K})$ 
and the exponent of $\Gal(L_K/L_K \cap \tilde{K})$ is equal to or less than $p^m$, then
$x_1$ can be replaced modulo ($x_2,\ldots, x_g)\Zp[[S]]$
(this fact is useful in \S \ref{Classification in the case lam=2}).
\end{itemize}
}

\noindent
Note that $x_1,\ldots, x_g$ are defined modulo $SX_{\Ki}$.
In the following, we assign the sum ``$\sum_{j=2}^g$'' to $0$ if $g=1$.
For any $a_1,\ldots,a_m \in X_{\Ki}$, $(a_1, \ldots, a_m)_X$ denotes the $\Zp[[S]]$-submodule of $X_{\Ki}$ generated by $a_1, \ldots, a_m$.
Put
\begin{eqnarray*}
&&
M:=(p^m x_1, Sx_1)_X,
\ \ 
N:=
\begin{cases}
(x_2,\ldots,x_g)_X & \text{if $g>1$},
\\
0 & \text{if $g=1$}, 
\end{cases}
\\
&&
\ \ 
\text{and}
\ \ 
L:=(p,S)(M+N)
=\left(p^{m+1}x_1, \; pSx_1, \; S^2x_1, \; px_j, \; Sx_j \; |\; j=2,\ldots,g\right)_X.
\end{eqnarray*}
By Lemma \ref{Ozaki01 ex seq (non-sp case)}, we identify $X_{\tilde{K}}/T$ 
with
 the submodule in $X_{\Ki}$.

\begin{prop}\label{key3}
\makeatletter
  \parsep   = 0pt
  \labelsep = .5pt
  \def\@listi{%
     \leftmargin = 20pt \rightmargin = 0pt
     \labelwidth\leftmargin \advance\labelwidth-\labelsep
     \topsep     = 0\baselineskip
     \partopsep  = 0pt \itemsep       = 0pt
     \itemindent = 0pt \listparindent = 10pt}
  \let\@listI\@listi
  \@listi
  \def\@listii{%
     \leftmargin = 20pt \rightmargin = 0pt
     \labelwidth\leftmargin \advance\labelwidth-\labelsep
     \topsep     = 0pt \partopsep     = 0pt \itemsep   = 0pt
     \itemindent = 0pt \listparindent = 10pt}
  \let\@listiii\@listii
  \let\@listiv\@listii
  \let\@listv\@listii
  \let\@listvi\@listii
  \makeatother
\ 
We have the following.
\begin{itemize}
\item[{\rm (i)}]
$X_{\tilde{K}}/T = M+N$.
\item[{\rm (ii)}]
$(M+L/L) \cap (N+L/L) =0$, and hence $X_{\tilde{K}}/(p,S,T) \isom (M+L/L) \op (N+L/L)$.
\item[{\rm (iii)}]
$N+L/L \isom \Fp^{\op (g-1)}$.
\end{itemize}
\end{prop}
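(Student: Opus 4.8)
The plan is to handle the three parts in the order (i), (iii), (ii): part (i) identifies the ambient module $(M+N)/L$, part (iii) produces the tool that makes (ii) painless, and (ii) is the real content.

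For (i), I would use Ozaki's exact sequence (Lemma \ref{Ozaki01 ex seq (non-sp case)}) together with Lemma \ref{an isom1} to regard $X_{\tilde{K}}/T$ as the kernel of the projection $\pi\colon X_{\Ki}\to\Gal(L_K\cap\tilde{K}/K)$, a cyclic group of order $p^m$. The crucial observation is that $\Gal(L_K\cap\tilde{K}/K)$ is a quotient of the abelian group $\Gal(\tilde{K}/K)$, so the conjugation $\Zp[[S]]$-action on $\Gal(L_{\Ki}\cap\tilde{K}/\Ki)$ is trivial; hence $S$ acts as $0$ on the target, and $\pi(f x_i)=f(0)\pi(x_i)$ for all $f\in\Zp[[S]]$. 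By the choice of generators, $\pi(x_1)$ generates the target while $\pi(x_j)=0$ for $j\ge2$, so $\pi\bigl(\sum_i f_i x_i\bigr)=f_1(0)\pi(x_1)$ vanishes exactly when $p^m\mid f_1(0)$. Writing $f_1=f_1(0)+(f_1-f_1(0))$ and using $p^m\mid f_1(0)$ then shows $f_1 x_1\in M$, while $\sum_{j\ge2}f_j x_j\in N$, giving $\Ker\pi=M+N$; the reverse inclusion is immediate. This proves (i), and since $(p,S)(M+N)=L$ it identifies $X_{\tilde{K}}/(p,S,T)$ with $(M+N)/L$, an $\Fp$-vector space.

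For (iii), the second isomorphism theorem gives $(N+L)/L\isom N/(N\cap L)$, which is generated over $\Fp$ by the images of $x_2,\ldots,x_g$ because $p x_j, S x_j\in L$; hence its dimension is at most $g-1$. For the lower bound I would use $L\subseteq(p,S)X_{\Ki}$: any relation $\sum_{j\ge2}c_j x_j\in L$ reduces modulo $(p,S)$ to a relation among the images of $x_2,\ldots,x_g$ in $X_{\Ki}/(p,S)\isom A_K/p$. Since $x_1,\ldots,x_g$ is a minimal system of generators, these images form part of an $\Fp$-basis, forcing $c_j\equiv0\pmod p$; thus $\dim_{\Fp}(N+L)/L=g-1$.

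The core is (ii), and the clean route is to produce a \emph{projection} rather than to analyze $M\cap N$ inside the non-free module $X_{\Ki}$. Using $X_{\Ki}/(p,S)\isom A_K/p\isom\Fp^{\oplus g}$ with basis the images of $x_1,\ldots,x_g$, I would let $\rho\colon X_{\Ki}\to\Fp^{\oplus(g-1)}$ be reduction modulo $(p,S)$ followed by the projection killing the image of $x_1$. Since $m\ge1$ we have $M=(p^m,S)x_1\subseteq(p,S)X_{\Ki}=\Ker\rho$ and likewise $L\subseteq(p,S)X_{\Ki}=\Ker\rho$, so $\rho$ induces $\overline{\rho}\colon(M+N)/L\to\Fp^{\oplus(g-1)}$ vanishing on $(M+L)/L$; on the other hand $\rho$ sends $x_2,\ldots,x_g$ to the standard basis, so by (iii) the restriction of $\overline{\rho}$ to $(N+L)/L$ is an isomorphism. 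Any element of $(M+L)/L\cap(N+L)/L$ is then simultaneously killed by $\overline{\rho}$ and lies where $\overline{\rho}$ is injective, hence is $0$; together with $(M+L)/L+(N+L)/L=(M+N)/L$ this gives the direct sum. I expect the only delicate points to be the action-theoretic inputs (that $S$ annihilates $\Gal(L_K\cap\tilde{K}/K)$, and that the chosen $x_i$ reduce to an $\Fp$-basis of $A_K/p$ compatible with this quotient); granting these, the projection $\rho$ bypasses the intersection $M\cap N$ entirely.
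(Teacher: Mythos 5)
Your proposal is correct and follows essentially the same route as the paper's own proof: all three parts rest on the identical inputs (Lemmas \ref{Ozaki01 ex seq (non-sp case)} and \ref{an isom1}, the trivial $S$-action on $\Gal(L_K\cap\tilde{K}/K)$, and the linear independence of the images of $x_1,\ldots,x_g$ in $A_K/p$), with your projection $\overline{\rho}$ in (ii) being a repackaging of the paper's element-wise argument, which reduces an element of the intersection modulo $(p,S)X_{\Ki}$ and invokes that same independence. One harmless slip: $\Ker\rho$ is $(p,S)X_{\Ki}+\Zp[[S]]x_1$ rather than $(p,S)X_{\Ki}$, but your argument uses only the inclusions $M,L\subseteq\Ker\rho$, which do hold.
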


\begin{proof}
(i)
By 
Lemmas \ref{Ozaki01 ex seq (non-sp case)} and \ref{an isom1}, 
we see 
$x_2,\ldots,x_g \in X_{\tilde{K}}/T$, and also
$$
p^m x_1, Sx_1 \in X_{\tilde{K}}/T,
$$
since $\Gal(L_K \cap \tilde{K}/K)$ has order $p^m$ and 
the
trivial action by $\Gal(\Ki/K)$.
Hence 
$
M+N \subset X_{\tilde{K}}/T.
$
Since $[X_{\Ki}:M+N] \le p^m$, we have the claim.
\\
(ii)
Any elements in $(M+L/L) \cap (N+L/L)$ are represented 
by some elements in $X_{\Ki}$ of the form 
$$
a_1 p^mx_1+a_2Sx_1=\sum_{j=2}^g b_j x_j +c 
\ \ 
(a_1,a_2,b_2,\ldots, b_g \in \{0, \ldots, p-1\}, c \in L).
$$
The left hand side is congruent to $0$ modulo $(p,S)X_{\Ki}$.
Since $x_1, \ldots, x_g$ are linearly independent in $A_K/p$, we see 
$b_2,\ldots, b_g=0$.
This implies that $(M+L/L) \cap (N+L/L) =0$.
\\
(iii)
By an exact sequence
$N/(p,S) \to X_{\Ki}/(p,S) \to (X_{\Ki}/N)/(p,S) \to 0$,
we have
$N/(p,S) \isom (\Zp/p)^{\op (g-1)}$.
Therefore we have only to show $N \cap L \subset (p,S)N$, since
$[N:N \cap L] \le p^{g-1}$.
Any elements in $N \cap L$ are represented 
by some elements in $X_{\Ki}$ of the form 
$$
(A_1 p^{m+1}+A_2pS+A_3S^2)x_1
+
\sum_{j=2}^g ( B_jp+B'_jS) x_j
=
\sum_{j=2}^g B''_j x_j
\ \ 
(A_1,A_2,A_3,B_j,B'_j, B''_j \in \Zp[[S]]).
$$
Since $x_1, \ldots, x_g$ are linearly independent modulo $(p,S)X_{\Ki}$, we see 
$B''_2,\ldots, B''_g \in (p,S)\Zp[[S]]$.
This implies that $N \cap L \subset (p,S)N$.
\end{proof}

We denote the image of $x \in X_{\Ki}$ in $\Gal(L_K/K)$ by $\overline{x}$.
The following lemma gives criterions in the case where $\langle \overline{x_1} \rangle$ is a direct summand of $\Gal(L_K/K)$.
We will use the lemma in \S \ref{The case A_K cyclic} and \S \ref{Classification in the case lam=2}.

\begin{lem}\label{Sx_1 in K, p^mx_1 notin K}
Denote the order of $\overline{x_1}$ by $\ord(\overline{x_1})$.
\makeatletter
  \parsep   = 0pt
  \labelsep = .5pt
  \def\@listi{%
     \leftmargin = 20pt \rightmargin = 0pt
     \labelwidth\leftmargin \advance\labelwidth-\labelsep
     \topsep     = 0\baselineskip
     \partopsep  = 0pt \itemsep       = 0pt
     \itemindent = 0pt \listparindent = 10pt}
  \let\@listI\@listi
  \@listi
  \def\@listii{%
     \leftmargin = 20pt \rightmargin = 0pt
     \labelwidth\leftmargin \advance\labelwidth-\labelsep
     \topsep     = 0pt \partopsep     = 0pt \itemsep   = 0pt
     \itemindent = 0pt \listparindent = 10pt}
  \let\@listiii\@listii
  \let\@listiv\@listii
  \let\@listv\@listii
  \let\@listvi\@listii
  \makeatother
\ 
\begin{itemize}
\item[{\rm (i)}]
If $p^m=\ord(\overline{x_1})$,
in other words, 
$\Gal(L_K/K) \isom \Gal(L_K \cap \tilde{K}/K) \op \Gal(L_K/L_K \cap \tilde{K})$,
then
$$
\dim_{\Fp}(X_{\tilde{K}}/(p,S,T))
=
\begin{cases}
g-1 & \text{if $Sx_1 \in (p,S)N$},
\\
g & \text{otherwise}.
\end{cases}
$$
\item[{\rm (ii)}]
Suppose that 
$
\Gal(L_K/K) 
=
\langle \overline{x_1} \rangle \op \langle \overline{x_2}, \ldots, \overline{x_g} \rangle
$
and 
$0<p^m<\ord(\overline{x_1})$, then
$p^mx_1 \not\in L
$.
In particular, $\dim_{\Fp}(X_{\tilde{K}}/(p,S,T)) \ge g$.
\end{itemize}
\end{lem}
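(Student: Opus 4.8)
The plan is to reduce the whole computation to the single module $M$ and evaluate $\dim_{\Fp}(M+L/L)$. By Proposition \ref{key3} we have $X_{\tilde{K}}/(p,S,T)\isom(M+L/L)\op(N+L/L)$ with $N+L/L\isom\Fp^{\op(g-1)}$, so
\[
\dim_{\Fp}(X_{\tilde{K}}/(p,S,T))=(g-1)+\dim_{\Fp}(M+L/L).
\]
Since $(p,S)M\subset L$, the space $M+L/L\isom M/(M\cap L)$ is a quotient of $M/(p,S)M$, which is spanned over $\Fp$ by the classes of $p^mx_1$ and $Sx_1$; hence $\dim_{\Fp}(M+L/L)\le 2$ a priori. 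Throughout I use that the unique prime above $p$ is totally ramified in $\Ki/K$, so that the projection $\pi\colon X_{\Ki}\surj\Gal(L_K/K)$, $x\mapsto\overline{x}$, has kernel $SX_{\Ki}$; in particular $\overline{Sx}=0$ for every $x$, and in case (i) the condition $\ord(\overline{x_1})=p^m$ forces $p^mx_1\in SX_{\Ki}$.

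For part (ii) I would show $p^mx_1\notin L$, which produces a nonzero class in $M/(M\cap L)$ and hence gives $\dim_{\Fp}(M+L/L)\ge 1$ and $\dim_{\Fp}(X_{\tilde{K}}/(p,S,T))\ge g$. Applying $\pi$ to $L=(p,S)(M+N)$ and using $\pi(S\,\cdot)=0$ shows $\pi(L)\subset p^{m+1}\langle\overline{x_1}\rangle+p\langle\overline{x_2},\ldots,\overline{x_g}\rangle$. If $p^mx_1\in L$, projecting onto the summand $\langle\overline{x_1}\rangle$ of the assumed decomposition $\Gal(L_K/K)=\langle\overline{x_1}\rangle\op\langle\overline{x_2},\ldots,\overline{x_g}\rangle$ gives $p^m\overline{x_1}\in p^{m+1}\langle\overline{x_1}\rangle$, i.e. $p^m(1-pt)\overline{x_1}=0$ for some $t$; as $1-pt$ is a unit this forces $p^m\overline{x_1}=0$, contradicting $p^m<\ord(\overline{x_1})$.

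For part (i) I would first bound $\dim_{\Fp}(M+L/L)\le 1$ and then decide between $0$ and $1$. Writing $p^mx_1=Sy$ with $y=\sum_i c_ix_i$, the terms $c_jSx_j$ $(j\ge 2)$ lie in $SN\subset L$ and $c_1Sx_1\equiv c_1(0)Sx_1$ modulo $S^2x_1\in L$; hence $p^mx_1\equiv c_1(0)Sx_1\pmod L$, so $M/(M\cap L)$ is generated by the single class of $Sx_1$. It then remains to prove the equivalence $Sx_1\in L\iff Sx_1\in(p,S)N$ (the backward direction being clear from $(p,S)N\subset L$). For the forward direction I would pass to the cyclic quotient $\overline{X}:=X_{\Ki}/N=\Zp[[S]]\xi$, where $\xi$ is the image of $x_1$ and $I$ its annihilator ideal. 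Since $\Gal(L_K/K)=\langle\overline{x_1}\rangle\op\langle\overline{x_2},\ldots,\overline{x_g}\rangle$ with $\ord(\overline{x_1})=p^m$, one has $\overline{X}/S\overline{X}\isom\Z/p^m$, i.e. $I+(S)=(p^m,S)$. The hypothesis $Sx_1\in L$ becomes $S\xi\in(p^{m+1}\xi,pS\xi,S^2\xi)$, which after collecting terms yields $uS-\alpha p^{m+1}\in I$ with $u:=1-\beta p-\gamma S$ a unit; multiplying by $u^{-1}$ gives $S-\delta p^{m+1}\in I$. Combining this with a relation $p^m-Sh\in I$ coming from $I+(S)=(p^m,S)$ produces $p^m(1-\delta ph)\in I$, hence $p^m\in I$ and then $S\in I$, i.e. $Sx_1\in N$. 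Finally, since $Sx_1\in SX_{\Ki}\subset(p,S)X_{\Ki}$ and the map $N/(p,S)N\inj X_{\Ki}/(p,S)X_{\Ki}$ is injective (the $\overline{x_2},\ldots,\overline{x_g}$ being independent in $A_K/p$), membership $Sx_1\in N$ upgrades to $Sx_1\in(p,S)N$. Thus $\dim_{\Fp}(M+L/L)=0$ exactly when $Sx_1\in(p,S)N$ and equals $1$ otherwise, which is the asserted dichotomy.

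I expect the main obstacle to be the forward implication of the equivalence $Sx_1\in L\iff Sx_1\in(p,S)N$ in part (i). The remaining steps are either bookkeeping inside $M/(p,S)M$ or clean projections to $\Gal(L_K/K)$, whereas here one must genuinely control the annihilator ideal $I$ of the cyclic quotient $\overline{X}$; the computation relies essentially on both the unit $u$ arising from the $p$- and $S$-divisible corrections and the relation $I+(S)=(p^m,S)$ furnished by $\ord(\overline{x_1})=p^m$.
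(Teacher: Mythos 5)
Your proposal is correct, and its skeleton is the same as the paper's: both reduce via Proposition \ref{key3} to computing $\dim_{\Fp}\left((M+L)/L\right)$, both use $p^mx_1\in SX_{\Ki}$ (the case (i) hypothesis read through $X_{\Ki}/SX_{\Ki}\isom\Gal(L_K/K)$) to show that $(M+L)/L$ is spanned by the class of $Sx_1$, both rest on the equivalence $Sx_1\in L\Leftrightarrow Sx_1\in(p,S)N$, and your part (ii) is essentially verbatim the paper's projection-to-$\Gal(L_K/K)$ argument. The one genuine divergence is the proof of the forward implication $Sx_1\in L\Rightarrow Sx_1\in(p,S)N$. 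The paper stays inside $X_{\Ki}$: it uses $p^mx_1=A_1Sx_1+\sum_{j\ge2}A_jSx_j$ to rewrite the generator $p^{m+1}x_1$ of $L$ in terms of $pSx_1$ and $(p,S)N$, then collects all $Sx_1$-terms of a relation $Sx_1=(ApS+A'S^2)x_1+\sum_{j\ge 2}(B_jp+B_j'S)x_j$ and divides by the unit $1-Ap-A'S$, landing directly in $(p,S)N$. You instead pass to the cyclic quotient $\overline{X}=X_{\Ki}/N\isom\Zp[[S]]/I$, encode the hypothesis as $I+(S)=(p^m,S)$, and run the same kind of unit manipulation at the level of ideals to get $S\in I$, i.e.\ $Sx_1\in N$; you then need one extra step, the injectivity of $N/(p,S)N\to X_{\Ki}/(p,S)X_{\Ki}$ (valid because $x_2,\ldots,x_g$ are part of a minimal generating system, the same fact invoked in the proof of Proposition \ref{key3}), to upgrade membership in $N$ to membership in $(p,S)N$. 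Your version isolates cleanly where the direct-sum hypothesis enters (only through $I+(S)=(p^m,S)$) and turns the crux into ideal arithmetic in $\Zp[[S]]$, at the cost of the extra upgrade step; the paper's computation is shorter and lands in $(p,S)N$ in one stroke, but the two arguments are the same unit trick in different clothing.
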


\begin{proof}
(i)
By the assumption, we have $p^m x_1 \equiv 0$ mod $SX_{\Ki}$.
Therefore, there exist some $A_1,\ldots,A_g \in \Zp[[S]]$ such that
$$
p^m x_1=A_1 S x_1+\sum_{j=2}^g A_j S x_j
\ \ 
\text{in $X_{\Ki}$}.
$$
The second term in the right hand side is in $(p,S)N \subset 
L
$.
This implies that $M+
L/L
$ is generated by $Sx_1+L$.
Moreover, we see that $p^{m+1}x_1$ is written as a linear form of $pSx_1$ and elements in $(p,S)N$.
We claim that $Sx_1 \in 
L
$ if and only if $Sx_1 \in (p,S)N$,
which implies the conclusion by Proposition \ref{key3}.
Assume that $Sx_1 \in 
L
$.
Then there exist some $A, A', B_j, B'_j \in \Zp[[S]]$ ($j=2,\ldots,g$) such that
$$
Sx_1 =(ApS+A'S^2)x_1+\sum_{j=2}^g (B_j p+B'_jS)x_j,
$$
so that 
$$
(1-Ap-A'S)Sx_1 =\sum_{j=2}^g (B_j p+B'S)x_j.
$$
This implies that $Sx_1 \in (p,S)N$ since $1-Ap-A'S \in \Zp[[S]]^\x$.
The converse is trivial.
\\
(ii)
Assume that $p^mx_1 \in L$.
Then, in a similar way as (i), we see that there exists some $a \in \Zp$ such that
$$
p^m x_1 \equiv a p^{m+1} x_1 \ \ \text{mod $SX_{\Ki}$},
$$
since 
$
\Gal(L_K/K) 
=
\langle \overline{x_1} \rangle \op \langle \overline{x_2}, \ldots, \overline{x_g} \rangle
$.
This implies that $p^m x_1$ equals $0$ in $A_K$, since $1-ap \in \Zp^\x$, which is a contradiction.
\end{proof}


\subsection{Classification in the case where $A_K$ is cyclic}\label{The case A_K cyclic}

Now, we show Theorem \ref{thm 3} (ii).
Suppose that $L_K \cap \tilde{K} \neq K$ and that $\dim_{\Fp} (A_K/p)=1$, in other words, $g=1$.
Then, $N=0$ and there is an isomorphism 
$
X_{\Ki} \isom \Zp[[S]]/F(S),
$
where $F(S) \in\Zp[S]$ is 
the distinguished polynomial generating the characteristic ideal
 of $X_{\Ki}$,
since $A_K$ is cyclic and $X_{\Ki}$ has no nontrivial finite $\Zp[[S]]$-submodules.
Note that we have $Sx_1 \neq 0$, since $S$ dose not divide $F(S)$.
In this case, we can apply Lemma \ref{Sx_1 in K, p^mx_1 notin K}.
\\[2mm]
{\it Proof of {\rm (ii-a)}.}
Assume that $\lam=1$.
If $p^m=\# A_K$, then 
$\dim_{\Fp}(X_{\tilde{K}}/(p,S,T))=1$ by Lemma \ref{Sx_1 in K, p^mx_1 notin K} (i).
On the other hand, assume that $0<p^m<\# A_K$.
Since 
$F(S)x_1=0$ in $X_{\Ki}$ 
and $F(0)x_1 \in 
L
$, we obtain $S x_1 \in 
L
$, which implies that $\dim_{\Fp}(X_{\tilde{K}}/(p,S,T))=1$ by 
Proposition \ref{key3} (ii), (iii).
\\[2mm]
{\it Proof of {\rm (ii-b)}.}
Assume that $\lam \ge 2$.
If $p^m=\# A_K$, then 
$\dim_{\Fp}(X_{\tilde{K}}/(p,S,T))=1$.
On the other hand, assume that $0<p^m<\# A_K$.
Since we see that $F(S) \equiv F(0)$ mod $(pS, S^2)$ from $\deg F(S) \ge 2$, we obtain
\begin{eqnarray*}
X_{\tilde{K}}/(p,S,T)
&\isom&
(p^m,S, F(S)) \Zp[[S]] /(p^{m+1},pS, S^2, F(S))\Zp[[S]]
\\
&=&
(p^m,S)\Zp[[S]]/(p^{m+1},pS, S^2)\Zp[[S]]
\\
&\isom& 
(\Z/p)^{\op 2}.
\end{eqnarray*}
This completes the proof of (ii).


\section{Conditions of $X_{\tilde{K}}$ to be cyclic in the case where $\lam=2$}\label{Classification in the case lam=2}
\subsection{Setting and Methods}\label{Methods}
Let $p$ be an odd prime number, $K$ an imaginary quadratic field such that $p$ does not split.
We use the notation in \S \ref{notation} for such $p$ and $K$.
We consider conditions that
 $X_{\tilde{K}}$ 
becomes
 $\Zp[[S,T]]$-cyclic.
In the case where $\dim_{\Fp}(A_K/p)=1$, Theorem \ref{thm 3} gives the condition, so that we have only to consider the case where $\dim_{\Fp}(A_K/p)=2$.

In this section, we treat the case where 
$\dim_{\Fp}(A_K/p)=2$ and $\Gal(L_K \cap \tilde{K}/K)$ is a direct summand of $\Gal(L_K/K) \isom A_K$.
Moreover, we add more assumptions that $\lam=2$ and that
 the roots $\alp, \beta \in \overline{\Qp}$ of the distinguished polynomial generating the characteristic ideal 
of $X_{\Ki}$ satisfy $\alp \neq \beta$.
We remark that the latter part of this assumption is expected to be held.

Define $\mathcal{O}:=\Zp[\alp, \beta]$ and $\Lam:=\mathcal{O}[[S]]$, and let
$\pi$
be a uniformizer in $\mathcal{O}$.
Then, by the above assumption, the characteristic ideal 
of $X_{\Ki}\ox_{\Zp} \mathcal{O}$ is described as
$$
(S-\alp)(S-\beta)\Lam
\ \ 
(\alp, \beta \in \pi \mathcal{O} \setminus \{ 0 \},\ \alp \neq \beta).
$$
Then, by Koike \cite{Koike}, there exist an integer $k$ with $0 \le k \le {\rm ord}_{\pi} (\beta-\alp)$
and an $\mathcal{O}$-basis $\mathbf{e}_1,\mathbf{e}_2$ of $X_{\Ki}\ox_{\Zp} \mathcal{O}$ such that the homomorphism of $\Lam$-modules
\begin{eqnarray}\label{injection to elementary module}
X_{\Ki}\ox_{\Zp} \mathcal{O}
\inj 
\Lam/(S-\alp) \op \Lam/(S-\beta);
\ \ 
\mathbf{e}_1 \ \mapsto 
\begin{bmatrix}
1
\\
1
\end{bmatrix},
\ \ 
\mathbf{e}_2 \ \mapsto 
\begin{bmatrix}
0
\\
\pi^k
\end{bmatrix}
\end{eqnarray}
is injective.
Note that $k$ depends only on the isomorphism class of $X_{\Ki}$.
We regard $X_{\Ki}\ox_{\Zp} \mathcal{O}$ 
as
 a $\Lam$-submodule of $\Lam/(S-\alp) \op \Lam/(S-\beta)$ by the above injection.
We can express the action of $S$ by
$$
S
\begin{bmatrix}
1
\\
1
\end{bmatrix}
=
\begin{bmatrix}
\alp
\\
\beta
\end{bmatrix},
\ \ 
S
\begin{bmatrix}
0
\\
\pi^k
\end{bmatrix}
=
\begin{bmatrix}
0
\\
\beta \pi^k
\end{bmatrix}.
$$
For convenience, we regard $X_{\Ki} \subset X_{\Ki}\ox_{\Zp} \mathcal{O}$ by the injection $x \mapsto x \ox 1$, and
put 
$\gam :=(\beta-\alp)\pi^{-k}$ and ${\rm ord}(a):={\rm ord}_{\pi}(a)$ for $a \in \mathcal{O}$.
We take such generators $x_1,x_2$ of $X_{\Ki}$ as in \S \ref{a system of generators}.
They are represented as
\begin{eqnarray*}\label{rep}
x_1
&=&
\lam_{11}\mathbf{e}_1+\lam_{12}\mathbf{e}_2,
\\
x_2
&=&
\lam_{21}\mathbf{e}_1+\lam_{22}\mathbf{e}_2
\end{eqnarray*}
for some $\lam_{ij} \in \mathcal{O}$.
We have $A_K \ox_{\Zp} \mathcal{O} = (\mathcal{O}/\pi^{N_1})x_1 \op (\mathcal{O}/\pi^{N_2})x_2$ and
$\Gal(L_K \cap \tilde{K}/K)\ox_{\Zp} \mathcal{O}= (\mathcal{O}/\pi^{N_1})x_1$
for some $N_1,N_2 \in \Z$.
We denote 
by 
$\langle a_1,\ldots ,a_l\rangle_{\Lam}$
(resp. $\langle a_1,\ldots a_l \rangle_{\mathcal{O}}$)
the $\Lam$-submodule (resp. $\mathcal{O}$-submodule) in $\Lam/(S-\alp) \op \Lam/(S-\beta)$ generated by $a_1,\ldots,a_l$.


\begin{lem}\label{A_K ox_{Zp} mathcal{O}}
We have an isomorphism
\begin{eqnarray*}
A_K \ox_{\Zp} \mathcal{O}
\isom
\begin{cases}
\mathcal{O}/\alp \op \mathcal{O}/\beta
&
\text{if $\ord(\gam) \ge \min\{ \ord(\alp),\ord(\beta) \}$},
\\
\mathcal{O}/\gam \op \mathcal{O}/\f{\alp\beta}{\gam}
&
\text{if $\ord(\gam) < \min\{ \ord(\alp),\ord(\beta) \}$}.
\end{cases}
\end{eqnarray*}
\end{lem}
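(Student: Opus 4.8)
The plan is to compute $A_K\ox_{\Zp}\mathcal{O}$ as the module of $S$-coinvariants of $M:=X_{\Ki}\ox_{\Zp}\mathcal{O}$, and then to read off its $\mathcal{O}$-module structure from the Smith normal form of the matrix representing multiplication by $S$. Since $K$ is imaginary quadratic with $p$ non-split, there is a single prime above $p$ and it is totally ramified in $\Ki/K$, so the standard isomorphism $X_{\Ki}/SX_{\Ki}\isom A_K$ holds; tensoring with the free, hence flat, $\Zp$-algebra $\mathcal{O}$ and using that $S$ acts $\mathcal{O}$-linearly gives $A_K\ox_{\Zp}\mathcal{O}\isom M/SM$. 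Thus everything reduces to determining $M/SM$ from the embedding (\ref{injection to elementary module}).

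First I would record the matrix of $S$ on $M$ with respect to the $\mathcal{O}$-basis $\mathbf{e}_1,\mathbf{e}_2$ supplied by Koike. Applying the ($\Lam$-linear, injective) map of (\ref{injection to elementary module}) to $S\mathbf{e}_1$ and using $\beta-\alp=\gam\pi^k$, we find that the image of $S\mathbf{e}_1$ equals $\alp\begin{bmatrix}1\\1\end{bmatrix}+\gam\begin{bmatrix}0\\\pi^k\end{bmatrix}$, so by injectivity $S\mathbf{e}_1=\alp\mathbf{e}_1+\gam\mathbf{e}_2$; likewise $S\mathbf{e}_2=\beta\mathbf{e}_2$. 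Hence multiplication by $S$ is given in the basis $\mathbf{e}_1,\mathbf{e}_2$ by
\[
B=\begin{bmatrix}\alp&0\\\gam&\beta\end{bmatrix},
\]
and therefore $A_K\ox_{\Zp}\mathcal{O}\isom M/SM\isom\Coker\bigl(B\colon\mathcal{O}^2\to\mathcal{O}^2\bigr)$.

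It then remains to run the Smith normal form of $B$ over the discrete valuation ring $\mathcal{O}$, whose invariant factors $d_1\mid d_2$ satisfy $\ord(d_1)=\min\{\ord(\alp),\ord(\beta),\ord(\gam)\}$ (the gcd of the entries) and $d_1d_2\sim\det B=\alp\beta$, so that $\ord(d_1)+\ord(d_2)=\ord(\alp)+\ord(\beta)$. If $\ord(\gam)\ge\min\{\ord(\alp),\ord(\beta)\}$, the gcd is governed by $\alp,\beta$, giving $\{\ord(d_1),\ord(d_2)\}=\{\ord(\alp),\ord(\beta)\}$ and hence $\Coker(B)\isom\mathcal{O}/\alp\op\mathcal{O}/\beta$. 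If instead $\ord(\gam)<\min\{\ord(\alp),\ord(\beta)\}$, then $d_1=\gam$ and $d_2=\f{\alp\beta}{\gam}$ up to units, the strict inequality forcing $2\ord(\gam)<\ord(\alp)+\ord(\beta)$ and so $d_1\mid d_2$, giving $\Coker(B)\isom\mathcal{O}/\gam\op\mathcal{O}/\f{\alp\beta}{\gam}$. These are exactly the two cases asserted.

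The routine linear algebra above rests on two points that deserve care. The first is the reduction in the opening paragraph: one must be sure that $A_K\ox\mathcal{O}$ is genuinely the $S$-cokernel of $M$, which is precisely where the hypothesis of a single totally ramified prime above $p$ enters (there is no surviving inertia or decomposition contribution). The second, and the only real bookkeeping, is the case split in the Smith normal form: identifying the gcd correctly when $\ord(\gam)$ meets $\min\{\ord(\alp),\ord(\beta)\}$, and verifying $d_1\mid d_2$ in the second case. I expect the latter to be the main, though still elementary, obstacle.
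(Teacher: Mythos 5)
Your proof is correct and takes essentially the same route as the paper: both reduce the lemma to computing the $S$-coinvariants of $X_{\Ki}\ox_{\Zp}\mathcal{O}$ through Koike's embedding, using $S\mathbf{e}_1=\alp\mathbf{e}_1+\gam\mathbf{e}_2$ and $S\mathbf{e}_2=\be\mathbf{e}_2$. The only difference is how the final linear algebra is packaged --- the paper manipulates the two generators of $S(X_{\Ki}\ox_{\Zp}\mathcal{O})$ by hand, while you invoke the Smith normal form (determinantal divisors) over the discrete valuation ring $\mathcal{O}$ --- a cosmetic distinction, though your version does handle the intermediate case $\min\{\ord(\alp),\ord(\be)\}\le\ord(\gam)<\max\{\ord(\alp),\ord(\be)\}$ uniformly, where the paper's displayed generator identity implicitly requires a change of basis.
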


\begin{proof}
We have
\begin{eqnarray*}
&&
S(X_{\Ki}\ox_{\Zp} \mathcal{O})
\\[1mm]
&&
=
\left\langle
\begin{bmatrix}
\alp
\\
\beta
\end{bmatrix}
,\ 
\begin{bmatrix}
0
\\
\beta \pi^k
\end{bmatrix}
\right\rangle_{\mathcal{O}}
=
\left\langle
\alp
\mathbf{e}_1+
\gam
\mathbf{e}_2
,\,
\beta
\mathbf{e}_2
\right\rangle_{\mathcal{O}}
\\[1mm]
&&
=
\begin{cases}
\left\langle
\alp \mathbf{e}_1
,\ 
\beta \mathbf{e}_2
\right\rangle_{\mathcal{O}}
&
\text{if $\ord(\gam) \ge \min\{ \ord(\alp),\ord(\beta) \}$},
\\[1mm]
\left\langle
\gam \left(\f{\alp}{\gam}\mathbf{e}_1+\mathbf{e}_2\right)
,\ 
\f{\alp \beta}{\gam}\mathbf{e}_1
\right\rangle_{\mathcal{O}}
&
\text{if $\ord(\gam) < \min\{ \ord(\alp),\ord(\beta) \}$}.
\end{cases}
\end{eqnarray*}
Since $A_K \ox_{\Zp} \mathcal{O} \isom (X_{\Ki}\ox_{\Zp} \mathcal{O})/S$, this yields the claim.
\end{proof}

\begin{lem}\label{pi^{pi^N}x_1}
The following conditions hold:
\begin{eqnarray*}
\begin{matrix}
\begin{cases}
\ 
\lam_{11}\f{\pi^{N_1}}{\alp} \in \mathcal{O},
\\[3mm]
\ 
\f{\pi^{N_1}}{\beta}\left( \lam_{12}-\lam_{11}\f{\gam}{\alp}\right) \in \mathcal{O},
\end{cases}
&&&&
\begin{cases}
\ 
\lam_{21}\f{\pi^{N_2}}{\alp} \in \mathcal{O},
\\[3mm]
\ 
\f{\pi^{N_2}}{\beta}\left( \lam_{22}-\lam_{21}\f{\gam}{\alp}\right) \in \mathcal{O}.
\end{cases}
\end{matrix}
\end{eqnarray*}
\end{lem}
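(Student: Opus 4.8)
The plan is to reduce the whole statement to the single membership relation $\pi^{N_i}x_i \in S(X_{\Ki}\ox_{\Zp}\mathcal{O})$, and then to extract the two displayed conditions for each $i$ simply by comparing $\mathbf{e}_1$- and $\mathbf{e}_2$-coefficients. First I would invoke the identification $A_K\ox_{\Zp}\mathcal{O}\isom(X_{\Ki}\ox_{\Zp}\mathcal{O})/S$ used at the end of the proof of Lemma \ref{A_K ox_{Zp} mathcal{O}}. Under this isomorphism the image of $x_i$ generates the cyclic summand $(\mathcal{O}/\pi^{N_i})x_i$ and hence has order $\pi^{N_i}$, so $\pi^{N_i}x_i$ becomes $0$ in the quotient; that is, $\pi^{N_i}x_i \in S(X_{\Ki}\ox_{\Zp}\mathcal{O})$ for $i=1,2$.

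Next I would feed in the explicit $\mathcal{O}$-module description of $S(X_{\Ki}\ox_{\Zp}\mathcal{O})$ already obtained in the proof of Lemma \ref{A_K ox_{Zp} mathcal{O}}, namely that it is generated over $\mathcal{O}$ by $\alp\mathbf{e}_1+\gam\mathbf{e}_2$ and $\beta\mathbf{e}_2$. Writing $\pi^{N_1}x_1=\pi^{N_1}(\lam_{11}\mathbf{e}_1+\lam_{12}\mathbf{e}_2)$ as $a(\alp\mathbf{e}_1+\gam\mathbf{e}_2)+b\,\beta\mathbf{e}_2$ for suitable $a,b\in\mathcal{O}$ and comparing coefficients in the $\mathcal{O}$-basis $\mathbf{e}_1,\mathbf{e}_2$, the $\mathbf{e}_1$-component forces $\pi^{N_1}\lam_{11}=a\alp$, so that $a=\lam_{11}\pi^{N_1}/\alp\in\mathcal{O}$, which is exactly the first condition. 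Substituting this value of $a$ into the $\mathbf{e}_2$-component $\pi^{N_1}\lam_{12}=a\gam+b\beta$ gives $b\beta=\pi^{N_1}\bigl(\lam_{12}-\lam_{11}\gam/\alp\bigr)$, whence $b=(\pi^{N_1}/\beta)\bigl(\lam_{12}-\lam_{11}\gam/\alp\bigr)\in\mathcal{O}$, which is the second condition.

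The computation for $x_2$ is verbatim the same with $N_1,\lam_{11},\lam_{12}$ replaced by $N_2,\lam_{21},\lam_{22}$, so the remaining two conditions drop out immediately. I do not expect a genuine obstacle here: the argument is a short piece of linear algebra once the membership $\pi^{N_i}x_i\in S(X_{\Ki}\ox_{\Zp}\mathcal{O})$ is in hand. The only point meriting a word of care is that the coefficient comparison is legitimate, and this is guaranteed because $\mathbf{e}_1,\mathbf{e}_2$ form an $\mathcal{O}$-basis of $X_{\Ki}\ox_{\Zp}\mathcal{O}$, so the expression of any element in terms of them is unique.
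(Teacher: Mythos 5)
Your proof is correct and takes essentially the same route as the paper: both rest on the membership $\pi^{N_i}x_i \in S(X_{\Ki}\ox_{\Zp}\mathcal{O})$, write that element as an $\mathcal{O}$-combination of the generators $\alp\mathbf{e}_1+\gam\mathbf{e}_2$ and $\beta\mathbf{e}_2$ of $S(X_{\Ki}\ox_{\Zp}\mathcal{O})$, and compare coefficients in the basis $\mathbf{e}_1,\mathbf{e}_2$ to solve for the two coefficients. The only difference is cosmetic: you make explicit the justification of $\pi^{N_i}x_i\in S(X_{\Ki}\ox_{\Zp}\mathcal{O})$ via the identification $A_K\ox_{\Zp}\mathcal{O}\isom(X_{\Ki}\ox_{\Zp}\mathcal{O})/S$ and the orders $\pi^{N_i}$ in the direct sum decomposition, which the paper leaves implicit.
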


\begin{proof}
Since $\pi^{N_1}x_1 \in SX_{\Ki} \ox_{\Zp}\mathcal{O}$, we have 
$\pi^{N_1}(\lam_{11}\mathbf{e}_1+\lam_{12}\mathbf{e}_2) \in \langle \alp\mathbf{e}_1+\gam\mathbf{e}_2,\ \beta\mathbf{e}_2\rangle_{\mathcal{O}}$.
Hence, there exist some $s,t \in \mathcal{O}$ such that $\lam_{11}\pi^{N_1}=s \alp$, $\lam_{12}\pi^{
N_1}
=s \gam+t\beta$.
They induce the first relations.
The rest are verified in the same way.
\end{proof}

Recall that we defined $N$ as the $\Zp[[S]]$-submodule of $X_{\Ki}$ generated by $x_2$ and that Lemma \ref{Sx_1 in K, p^mx_1 notin K} (i) gives a criterion whether $X_{\tilde{K}}$ is $\Zp[[S,T]]$-cyclic or not.
In the same way as the proofs of Proposition \ref{key3} and Lemma \ref{Sx_1 in K, p^mx_1 notin K}, we can show the following

\begin{lem}\label{pre-lem_criterion}
$X_{\tilde{K}}$ is $\Zp[[S,T]]$-cyclic if and only if $S x_1 \in (\pi,S)N \ox_{\Zp} \mathcal{O}$.
\end{lem}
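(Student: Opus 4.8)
The plan is to descend the analysis of \S\ref{a system of generators} to $\mathcal{O}$-coefficients, where the explicit description of $X_{\Ki}\ox_{\Zp}\mathcal{O}$ through $\mathbf{e}_1,\mathbf{e}_2$ is available, and then to transcribe the proofs of Proposition \ref{key3} and Lemma \ref{Sx_1 in K, p^mx_1 notin K}(i) with $\pi$, $\pi^{N_1}$ and residue field $\mathcal{O}/\pi$ in place of $p$, $p^m$ and $\Fp$.

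First I would reduce $\Zp[[S,T]]$-cyclicity to $\mathcal{O}[[S,T]]$-cyclicity. As $\mathcal{O}$ is a finite free, hence flat, $\Zp$-algebra that is local with residue field $\mathcal{O}/\pi$, base change along $\Zp[[S,T]]\to\mathcal{O}[[S,T]]$ gives $(X_{\tilde{K}}\ox_{\Zp}\mathcal{O})/(\pi,S,T)\isom(X_{\tilde{K}}/(p,S,T))\ox_{\Fp}(\mathcal{O}/\pi)$, so the two dimensions $\dim_{\mathcal{O}/\pi}((X_{\tilde{K}}\ox_{\Zp}\mathcal{O})/(\pi,S,T))$ and $\dim_{\Fp}(X_{\tilde{K}}/(p,S,T))$ coincide. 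By Nakayama's lemma over each local ring, $X_{\tilde{K}}$ is $\Zp[[S,T]]$-cyclic if and only if $X_{\tilde{K}}\ox_{\Zp}\mathcal{O}$ is $\mathcal{O}[[S,T]]$-cyclic; hence it suffices to test cyclicity after $\ox_{\Zp}\mathcal{O}$.

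Next I would install the $\mathcal{O}$-analogues of the modules of \S\ref{a system of generators}. Flatness applied to Lemma \ref{Ozaki01 ex seq (non-sp case)} realizes $(X_{\tilde{K}}/T)\ox_{\Zp}\mathcal{O}$ as a submodule of $X_{\Ki}\ox_{\Zp}\mathcal{O}$, and I set $N_{\mathcal{O}}:=\langle x_2\rangle_{\Lam}$, $M_{\mathcal{O}}:=\langle\pi^{N_1}x_1,\,Sx_1\rangle_{\Lam}$ and $L_{\mathcal{O}}:=(\pi,S)(M_{\mathcal{O}}+N_{\mathcal{O}})$. Since $\Gal(L_K\cap\tilde{K}/K)$ is cyclic of order $p^m$, the ideals $(p^m)$ and $(\pi^{N_1})$ of $\mathcal{O}$ agree, so $M_{\mathcal{O}}=M\ox_{\Zp}\mathcal{O}$, $N_{\mathcal{O}}=N\ox_{\Zp}\mathcal{O}$, and Proposition \ref{key3}(i) base-changes to $(X_{\tilde{K}}/T)\ox_{\Zp}\mathcal{O}=M_{\mathcal{O}}+N_{\mathcal{O}}$. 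The two structural inputs on which the earlier proofs rest both survive: $x_1,x_2$ stay linearly independent modulo $(\pi,S)$, because $(X_{\Ki}\ox_{\Zp}\mathcal{O})/(\pi,S)\isom(A_K\ox_{\Zp}\mathcal{O})/\pi\isom(\mathcal{O}/\pi)x_1\op(\mathcal{O}/\pi)x_2$; and the direct-summand hypothesis of this section means exactly that the order of $x_1$ in $A_K\ox_{\Zp}\mathcal{O}=(X_{\Ki}\ox_{\Zp}\mathcal{O})/S$ equals $\pi^{N_1}$, so that $\pi^{N_1}x_1\in S(X_{\Ki}\ox_{\Zp}\mathcal{O})$. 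With these, the arguments of Proposition \ref{key3}(ii),(iii) go through and give $(X_{\tilde{K}}\ox_{\Zp}\mathcal{O})/(\pi,S,T)\isom(M_{\mathcal{O}}+L_{\mathcal{O}})/L_{\mathcal{O}}\op(N_{\mathcal{O}}+L_{\mathcal{O}})/L_{\mathcal{O}}$ with the second summand isomorphic to $(\mathcal{O}/\pi)^{\op(g-1)}=\mathcal{O}/\pi$ (recall $g=2$).

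Finally I would run the computation of Lemma \ref{Sx_1 in K, p^mx_1 notin K}(i) with $\pi$ replacing $p$. From $\pi^{N_1}x_1\in S(X_{\Ki}\ox_{\Zp}\mathcal{O})$ one deduces that $(M_{\mathcal{O}}+L_{\mathcal{O}})/L_{\mathcal{O}}$ is generated by the single class $Sx_1+L_{\mathcal{O}}$; hence $X_{\tilde{K}}\ox_{\Zp}\mathcal{O}$ is $\mathcal{O}[[S,T]]$-cyclic precisely when this class vanishes, i.e.\ when $Sx_1\in L_{\mathcal{O}}$. Writing a membership relation $Sx_1=(A\pi S+A'S^2)x_1+\sum_{j\ge2}(B_j\pi+B_j'S)x_j$ and dividing through by the unit $1-A\pi-A'S\in\Lam^{\x}$ shows $Sx_1\in L_{\mathcal{O}}$ is equivalent to $Sx_1\in(\pi,S)N_{\mathcal{O}}=(\pi,S)N\ox_{\Zp}\mathcal{O}$, the reverse implication being immediate; combined with the reduction of the first step this is the assertion. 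I expect the only delicate point to be the bookkeeping in the reduction step: one must check that $\Zp[[S,T]]$- and $\mathcal{O}[[S,T]]$-cyclicity genuinely coincide and that the $\Zp$-invariant $p^m$ of \S\ref{a system of generators} translates into the $\pi$-order $\pi^{N_1}$ after base change, the residue field passing from $\Fp$ to $\mathcal{O}/\pi$; once this is secured, the remaining steps are a faithful transcription of the earlier arguments.
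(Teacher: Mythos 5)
Your proposal is correct and takes essentially the same approach as the paper's proof: reduce to $\mathcal{O}[[S,T]]$-cyclicity via Nakayama's lemma, base-change $M$, $N$, $L$ to $\mathcal{O}$, and rerun the arguments of Proposition \ref{key3} (ii), (iii) and Lemma \ref{Sx_1 in K, p^mx_1 notin K} (i) with $\pi$ and $\pi^{N_1}$ in place of $p$ and $p^m$. You merely make explicit the bookkeeping (the flat base-change comparison of residue-field dimensions and the identification $(\pi^{N_1})=(p^m)$ in $\mathcal{O}$) that the paper leaves as ``in the same way as''.
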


\begin{proof}
Note that there is nothing to show if $\mathcal{O}/\Zp$ is unramified.
And also, note that 
$X_{\tilde{K}}$ is $\Zp[[S,T]]$-cyclic if and only if $X_{\tilde{K}}\ox_{\Zp} \mathcal{O}$ is $\mathcal{O}[[S,T]]$-cyclic by Nakayama's lemma. 
Put
$L':=(\pi,S)(M \ox_{\Zp} \mathcal{O}+N \ox_{\Zp} \mathcal{O})$.
Then, in the same way as the proofs of Proposition \ref{key3} (ii) and (iii), we can show that
$X_{\tilde{K}} \ox_{\Zp} \mathcal{O} \isom (M \ox_{\Zp} \mathcal{O}+L'/L') \op (N \ox_{\Zp} \mathcal{O}+L'/L')$
and
$(N \ox_{\Zp} \mathcal{O}+L')/L' \isom \Fp$.
Moreover, in the same way as the proof of Lemma \ref{Sx_1 in K, p^mx_1 notin K} (i), we can show that
$Sx_1 \in (\pi,S)N \ox_{\Zp} \mathcal{O}$
if and only if 
$Sx_1 \in L'$.
\end{proof}

\begin{lem}\label{lem_criterion}
$X_{\tilde{K}}$ is $\Zp[[S,T]]$-cyclic if and only if
there exist some $f(S),g(S) \in \Lam$ such that
\begin{eqnarray}\label{criterion}
\begin{cases}
\ 
\lam_{11}\alp
=
f(\alp)\lam_{21}\pi,
\\
\ 
(\lam_{11}+\lam_{12}\pi^k)\beta
=
(\lam_{21}+\lam_{22}\pi^k)(f(\beta)\pi+g(\beta)\gam\pi^k).
\end{cases}
\end{eqnarray}
\end{lem}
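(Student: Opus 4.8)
The plan is to read this off directly from Lemma \ref{pre-lem_criterion}, translating the abstract membership condition $Sx_1 \in (\pi,S)N\ox_{\Zp}\mathcal{O}$ into the explicit system (\ref{criterion}) by passing to the coordinate description (\ref{injection to elementary module}). Since $N\ox_{\Zp}\mathcal{O}$ is the cyclic $\Lam$-module generated by $x_2$, we have $(\pi,S)N\ox_{\Zp}\mathcal{O} = \{\phi(S)x_2 : \phi \in (\pi,S)\}$, where $(\pi,S)$ is the maximal ideal of $\Lam=\mathcal{O}[[S]]$. Thus $X_{\tilde{K}}$ is $\Zp[[S,T]]$-cyclic if and only if there exists $\phi \in (\pi,S)$ with $Sx_1 = \phi(S)x_2$ in $X_{\Ki}\ox_{\Zp}\mathcal{O}$.

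First I would split this single equation into two scalar ones using (\ref{injection to elementary module}): since $S$ acts as multiplication by $\alp$ on $\Lam/(S-\alp)$ and by $\beta$ on $\Lam/(S-\beta)$, comparing the first and second coordinates of $Sx_1=\phi(S)x_2$ gives $\alp\lam_{11} = \phi(\alp)\lam_{21}$ and $\beta(\lam_{11}+\lam_{12}\pi^k) = \phi(\beta)(\lam_{21}+\lam_{22}\pi^k)$. The two structural observations are then: (a) because $\phi$ has constant term in $\pi\mathcal{O}$ and $\alp\in\pi\mathcal{O}\setminus\{0\}$, the value $\phi(\alp)$ lies in $\pi\mathcal{O}$, so $\phi(\alp)=\pi f(\alp)$ for a suitable $f\in\Lam$; and (b) for any $i$ the element $\beta^i-\alp^i$ is divisible by $\beta-\alp$ in $\mathcal{O}$, whence $\phi(\beta)-\phi(\alp)\in(\beta-\alp)\mathcal{O}=\gam\pi^k\mathcal{O}$, so $\phi(\beta)=\pi f(\beta)+\gam\pi^k g(\beta)$ for a suitable $g\in\Lam$. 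Substituting these into the two scalar equations reproduces exactly (\ref{criterion}); for the forward direction one may take $f$ and $g$ to be the constants $\phi(\alp)/\pi$ and $(\phi(\beta)-\phi(\alp))/(\gam\pi^k)$.

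For the converse I would reverse the construction: given $f,g\in\Lam$ satisfying (\ref{criterion}), prescribe the target values $\phi(\alp):=\pi f(\alp)$ and $\phi(\beta):=\pi f(\beta)+\gam\pi^k g(\beta)$, and produce a single $\phi\in(\pi,S)$ realizing them, for instance the affine power series interpolating these two values at $\alp$ and $\beta$. The content here is a compatibility and an integrality check: the two prescribed values differ by $\pi(f(\beta)-f(\alp))+\gam\pi^k g(\beta)\in(\beta-\alp)\mathcal{O}$, so the interpolation is possible over $\mathcal{O}$, and its constant term equals $\phi(\alp)$ minus the slope times $\alp$, which lies in $\pi\mathcal{O}$ because both $\phi(\alp)\in\pi\mathcal{O}$ and $\alp\in\pi\mathcal{O}$; hence $\phi\in(\pi,S)$ as required, and unwinding gives $Sx_1=\phi(S)x_2$.

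The main obstacle is precisely this converse step: one must manufacture a power series in the maximal ideal $(\pi,S)$ that simultaneously hits the two prescribed values, and the reason this is possible is the congruence $\phi(\beta)\equiv\phi(\alp)\pmod{\beta-\alp}$ forced by the $\gam\pi^k g(\beta)$ term in the second equation together with the divisibility $\beta-\alp\mid f(\beta)-f(\alp)$. Tracking this congruence, and checking that the resulting constant term falls into $\pi\mathcal{O}$, is where the specific shape of (\ref{criterion})---in particular the factor $\gam\pi^k$ and the appearance of $f$ alone in the first equation---becomes essential; the remaining manipulations only reorganize the two coordinate equations and are routine.
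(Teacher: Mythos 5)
Your proposal is correct, and it rests on the same two pillars as the paper's proof---Lemma \ref{pre-lem_criterion} and the coordinate embedding (\ref{injection to elementary module})---but the mechanism for turning the membership $Sx_1 \in (\pi,S)N\ox_{\Zp}\mathcal{O}$ into the system (\ref{criterion}) is genuinely different. The paper never works with a single series $\phi$: it presents $(\pi,S)N\ox_{\Zp}\mathcal{O}$ as the $\Lam$-module generated by $\pi x_2$ and $Sx_2$, and then replaces the second generator by $Sx_2-(\alp/\pi)\,\pi x_2$ (legitimate since $\alp/\pi\in\mathcal{O}$), whose coordinates are $[\,0,\ (\lam_{21}+\lam_{22}\pi^k)\gam\pi^k\,]$ because $\beta-\alp=\gam\pi^k$. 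With this presentation, writing $Sx_1=f(S)\cdot\pi x_2+g(S)\cdot\bigl(Sx_2-(\alp/\pi)\pi x_2\bigr)$ and reading off the two coordinates \emph{is} the system (\ref{criterion}), so both directions of the equivalence are immediate and no further verification is needed. Your route instead parametrizes the submodule as $\{\phi(S)x_2:\phi\in(\pi,S)\}$, which obliges you to prove a two-way translation between the single series $\phi$ and the pair $(f,g)$: forward via the divisibilities $\phi(\alp)\in\pi\mathcal{O}$ and $(\beta-\alp)\mid\bigl(\phi(\beta)-\phi(\alp)\bigr)$, backward via an affine interpolation plus the integrality check on its constant term. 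All of these steps are sound (for the divisibility of $\phi(\beta)-\phi(\alp)$ one should either note that $\sum_i c_i(\beta^i-\alp^i)/(\beta-\alp)$ converges in the complete ring $\mathcal{O}$, or invoke, as the paper does elsewhere, the division lemma $\phi(S)=\phi(\alp)+(S-\alp)\psi(S)$), so your proof is a valid alternative. What the paper's change of generators buys is brevity: the criterion drops out with no computation beyond the column reduction. What your argument buys is an explanation of why the shape of (\ref{criterion}) is forced---the factor $\pi$ in the first equation and the factor $\gam\pi^k$ in the second are exactly the image of the ideal $(\pi,S)$ under evaluation at $S=\alp$ and the modulus governing $\phi(\beta)-\phi(\alp)$, respectively.
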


\begin{proof}
We have
$Sx_1=
\begin{bmatrix}
\lam_{11}\alp
\\
\lam_{11}\beta+\lam_{12}\beta\pi^k
\end{bmatrix}
$
and
\begin{eqnarray*}
(\pi, S)N \ox_{\Zp}\mathcal{O}
&=&
\left\langle
\begin{bmatrix}
\lam_{21}\pi
\\
(\lam_{21}+\lam_{22}\pi^{k})\pi
\end{bmatrix},
\ 
\begin{bmatrix}
\lam_{21}\alp
\\
\lam_{21}\beta+\lam_{22}\beta\pi^k
\end{bmatrix}
\right\rangle_{\Lam}
\\
&=&
\left\langle
\begin{bmatrix}
\lam_{21}\pi
\\
(\lam_{21}+\lam_{22}\pi^{k})\pi
\end{bmatrix},
\ 
\begin{bmatrix}
0
\\
(\lam_{21}+\lam_{22}\pi^k)\gam\pi^k
\end{bmatrix}
\right\rangle_{\Lam}.
\end{eqnarray*}
Then 
the
claim follows immediately by Lemma \ref{pre-lem_criterion}.
\end{proof}

Now, we consider the case 
where
$N_1 \ge N_2$.
Then, as we mentioned in \S \ref{a system of generators},
$x_1$ is defined modulo $(x_2,SX_{\Ki}) \subset X_{\Ki}$,
so that we may change $x_1$ up to 
$x_2\Lam$.
Furthermore, since $\lam_{11}\lam_{22}-\lam_{12}\lam_{21} \in \mathcal{O}^\x$,
at least one (possibly both) of $\lam_{21}$, $\lam_{22}$ is in $\mathcal{O}^\x$. 
When $\lam_{21} \in \mathcal{O}^\x$
(resp. $\lam_{22} \in \mathcal{O}^\x$), 
we may assume that $\lam_{21}=1$, $\lam_{11}=0$, and $\lam_{12}=1$ 
(resp. $\lam_{22}=1$, $\lam_{12}=0$, and $\lam_{11}=1$).
Thus, if $N_1 \ge N_2$, we are reduced to only two cases:
{
\makeatletter
  \parsep   = 0pt
  \labelsep = .5pt
  \def\@listi{%
     \leftmargin = 20pt \rightmargin = 0pt
     \labelwidth\leftmargin \advance\labelwidth-\labelsep
     \topsep     = 0\baselineskip
     \partopsep  = 0pt \itemsep       = 0pt
     \itemindent = 0pt \listparindent = 10pt}
  \let\@listI\@listi
  \@listi
  \def\@listii{%
     \leftmargin = 20pt \rightmargin = 0pt
     \labelwidth\leftmargin \advance\labelwidth-\labelsep
     \topsep     = 0pt \partopsep     = 0pt \itemsep   = 0pt
     \itemindent = 0pt \listparindent = 10pt}
  \let\@listiii\@listii
  \let\@listiv\@listii
  \let\@listv\@listii
  \let\@listvi\@listii
  \makeatother
\ 
\begin{itemize}
\item[(I)]
$\lam_{21}=1$, $\lam_{11}=0$, and $\lam_{12}=1$.
In other words, 
$x_1=\mathbf{e}_2$ and $x_2=\mathbf{e}_1+\lam_{22}\mathbf{e}_2$.
\item[(II)]
$\lam_{22}=1$, $\lam_{11}=1$, and $\lam_{12}=0$.
In other words, 
$x_1=\mathbf{e}_1$ and $x_2=\lam_{21}\mathbf{e}_1+\mathbf{e}_2$.
\end{itemize}
}

\begin{cor}\label{cor N_1 ge N_2}
Suppose that $N_1 \ge N_2$.
We have the following.
\makeatletter
  \parsep   = 0pt
  \labelsep = .5pt
  \def\@listi{%
     \leftmargin = 20pt \rightmargin = 0pt
     \labelwidth\leftmargin \advance\labelwidth-\labelsep
     \topsep     = 0\baselineskip
     \partopsep  = 0pt \itemsep       = 0pt
     \itemindent = 0pt \listparindent = 10pt}
  \let\@listI\@listi
  \@listi
  \def\@listii{%
     \leftmargin = 20pt \rightmargin = 0pt
     \labelwidth\leftmargin \advance\labelwidth-\labelsep
     \topsep     = 0pt \partopsep     = 0pt \itemsep   = 0pt
     \itemindent = 0pt \listparindent = 10pt}
  \let\@listiii\@listii
  \let\@listiv\@listii
  \let\@listv\@listii
  \let\@listvi\@listii
  \makeatother
\ 
\begin{itemize}
\item[{\rm (i)}]
In the case of {\rm (I)}, 
$X_{\tilde{K}}$ is $\Zp[[S,T]]$-cyclic if and only if
there exists some $x \in \mathcal{O}$ such that
\begin{eqnarray*}\label{criterion I}
\beta
=
(1+\lam_{22}\pi^k)\gam x.
\end{eqnarray*}
\item[{\rm (ii)}]
In the case of {\rm (II)}, 
$X_{\tilde{K}}$ is $\Zp[[S,T]]$-cyclic if and only if
there exist some $f(S), g(S) \in \Lam$ such that
\begin{eqnarray*}\label{criterion II}
\alp
=
f(\alp)\lam_{21}\pi
,\ \ \beta
=
(\lam_{21}+\pi^k)(f(\beta)\pi+g(\beta)\gam\pi^k).
\end{eqnarray*}
\end{itemize}
\end{cor}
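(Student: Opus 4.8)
The plan is to read off both equivalences directly from the criterion of Lemma \ref{lem_criterion}, substituting the normalized coefficients attached to the two cases (I) and (II). Throughout I would use the identity $\beta-\alp=\gam\pi^k$, which is merely the definition $\gam=(\beta-\alp)\pi^{-k}$, together with the fact that, since $\alp,\beta\in\pi\mathcal{O}$ lie in the maximal ideal, evaluation at $\alp$ and $\beta$ of a power series in $\Lam=\mathcal{O}[[S]]$ is well defined $\pi$-adically. Recall that $\mathcal{O}=\Zp[\alp,\beta]$ is a domain and $\pi\neq 0$.

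Case (II) requires no work beyond substitution. Here $\lam_{11}=\lam_{22}=1$ and $\lam_{12}=0$, so the two conditions of Lemma \ref{lem_criterion} become $\alp=f(\alp)\lam_{21}\pi$ and $\beta=(\lam_{21}+\pi^k)(f(\beta)\pi+g(\beta)\gam\pi^k)$. These are verbatim the equations asserted in (ii), and since Lemma \ref{lem_criterion} is already an ``if and only if'' in $f,g$, the equivalence in (ii) follows at once, with both directions automatic.

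For case (I) I would substitute $\lam_{11}=0$, $\lam_{12}=1$, $\lam_{21}=1$. The first equation of Lemma \ref{lem_criterion} becomes $0=f(\alp)\pi$, which forces $f(\alp)=0$ because $\mathcal{O}$ is a domain and $\pi\neq 0$. By Weierstrass division of $f$ by the distinguished polynomial $S-\alp$ in $\Lam$ I may write $f(S)=(S-\alp)q(S)$ with $q\in\Lam$, so that $f(\beta)=(\beta-\alp)q(\beta)=\gam\pi^k q(\beta)$. The second equation then reads $\pi^k\beta=(1+\lam_{22}\pi^k)\bigl(\gam\pi^k q(\beta)\pi+g(\beta)\gam\pi^k\bigr)=(1+\lam_{22}\pi^k)\gam\pi^k\bigl(q(\beta)\pi+g(\beta)\bigr)$; cancelling $\pi^k$ and putting $x:=q(\beta)\pi+g(\beta)\in\mathcal{O}$ yields $\beta=(1+\lam_{22}\pi^k)\gam x$, which is the criterion (i). For the converse I would take the given $x$, set $f=0$ and $g=x$ (constant), and check directly that both equations of Lemma \ref{lem_criterion} hold.

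The computations are routine; the only step needing a little care is the reduction in case (I), where the pointwise vanishing $f(\alp)=0$ must be upgraded to genuine divisibility by $S-\alp$ in the power series ring, so that the factor $\beta-\alp=\gam\pi^k$ can be pulled out of $f(\beta)$ and cancelled against the $\pi^k$ arising from $\lam_{12}=1$. This collapsing of the two free parameters $f,g$ into the single scalar $x$ is exactly why case (I) acquires the one-parameter form while case (II) retains both $f$ and $g$; I expect this to be the one place where the argument is more than mechanical substitution.
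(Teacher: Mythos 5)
Your proposal is correct and follows essentially the same route as the paper: case (II) is immediate substitution into Lemma \ref{lem_criterion}, and for case (I) the forward direction deduces $f(\alp)=0$, applies the division lemma to factor $f(S)=(S-\alp)q(S)$, and sets $x=q(\beta)\pi+g(\beta)$ (the paper's $x=f(\beta)\pi/(\beta-\alp)+g(\beta)$ is the same element), while the converse takes $f=0$, $g=x$ exactly as the paper does.
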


\begin{proof}
There is nothing to show for (ii)
by Lemma \ref{lem_criterion}.
We prove (i).
First, assume that $f(S),g(S) \in \Lam$ satisfy (\ref{criterion}).
Then $f(\alp)=0$.
Therefore, $f(S)$ is divisible $S-\alp$ by division lemma
(see \cite[Chapter VII \S 3.8 Proposition 5]{Bour comm alg}).
Put $x:=f(\beta)\pi/(\beta-\alp)+g(\beta)$.
Then we can easily check $x \in \mathcal{O}$ and
$\beta
=
(1+\lam_{22}\pi^k)\gam x
$.
Conversely, assume that there exist such $x \in \mathcal{O}$.
Put $f(S):=0$, $g(S):=x$
Then we can also easily check that they satisfy (\ref{criterion}).
\end{proof}



\subsection{The case where $k>0$}\label{k>0}

\begin{prop}\label{k>0 ord(gam)<min{ ord(alp),ord(beta)}}
If $k>0$ and 
$\ord (\gam)<\min\{ \ord(\alp),\ord(\beta)\}$,
then $X_{\tilde{K}}$ is $\Zp[[S,T]]$-cyclic.
\end{prop}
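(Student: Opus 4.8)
The plan is to verify cyclicity through the criterion of Corollary~\ref{cor N_1 ge N_2} (which specializes Lemma~\ref{lem_criterion}). First I would record the elementary consequences of the hypotheses. Since $\gam=(\be-\alp)\pi^{-k}$ and $\ord(\gam)<\min\{\ord(\alp),\ord(\be)\}$, both $\alp$ and $\be$ are divisible by $\gam$ in $\mathcal{O}$, and the relation $\be-\alp=\gam\pi^{k}$ rewrites as the key identity $\frac{\alp}{\gam}+\pi^{k}=\frac{\be}{\gam}$. Moreover the standing assumption $\dim_{\Fp}(A_K/p)=2$ forces both elementary divisors of $A_K\ox_{\Zp}\mathcal{O}$ to be nontrivial, so by Lemma~\ref{A_K ox_{Zp} mathcal{O}} we have $A_K\ox_{\Zp}\mathcal{O}\isom\mathcal{O}/\gam\op\mathcal{O}/\frac{\alp\be}{\gam}$ with $\ord(\gam)\ge 1$ and $\ord(\gam)<\ord\!\left(\frac{\alp\be}{\gam}\right)$.

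Next I would pin down the generators. A direct computation in $A_K\ox_{\Zp}\mathcal{O}=(X_{\Ki}\ox_{\Zp}\mathcal{O})/S$, using $S\mathbf{e}_1=\alp\mathbf{e}_1+\gam\mathbf{e}_2$ and $S\mathbf{e}_2=\be\mathbf{e}_2$, shows that $\mathbf{e}_1$ has order $\pi^{\ord(\alp\be/\gam)}$ (the larger factor) while $v:=\frac{\alp}{\gam}\mathbf{e}_1+\mathbf{e}_2$ has order $\pi^{\ord(\gam)}$ (the smaller factor), and that these two elements realize the direct-sum decomposition. Under the normalization $N_1\ge N_2$ of \S\ref{a system of generators} the summand $\Gal(L_K\cap\tilde{K}/K)=\langle x_1\rangle$ is the larger one, so $x_1$ is a unit multiple of $\mathbf{e}_1$; in particular we are forced into case (II) of Corollary~\ref{cor N_1 ge N_2}, since in case (I) one would have $x_1=\mathbf{e}_2$ of order $\pi^{\ord(\be)}$, which is never the larger divisor (as $\ord(\be)=\ord(\alp\be/\gam)$ would force $\ord(\alp)=\ord(\gam)$). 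Taking $x_2=v$ gives $\lam_{21}=\alp/\gam$.

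Finally I would check the two equations of Corollary~\ref{cor N_1 ge N_2}(ii) by the explicit choice $f(S)=\gam/\pi$ and $g(S)=0$. Here $f\in\Lam$ precisely because $\ord(\gam)\ge 1$, and one computes $f(\alp)\lam_{21}\pi=\frac{\gam}{\pi}\cdot\frac{\alp}{\gam}\cdot\pi=\alp$ together with $(\lam_{21}+\pi^{k})f(\be)\pi=\left(\frac{\alp}{\gam}+\pi^{k}\right)\gam=\be$ by the identity above; hence both equations hold and $X_{\tilde{K}}$ is $\Zp[[S,T]]$-cyclic.

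The main obstacle is structural bookkeeping rather than a deep idea: one must justify that the chosen $x_1,x_2$ are legitimate generators in the sense of \S\ref{a system of generators}, and control the valuation of $\lam_{21}$. If $x_2$ is not set equal to $v$ on the nose, then $\lam_{21}=\frac{\alp}{\gam}+s$ with $\ord(s)\ge\ord(\alp\be/\gam)-\ord(\gam)$, and one must check the criterion survives; the point is that $\lam_{21}+\pi^{k}=\frac{\be}{\gam}+s$ still has valuation $\ord(\be)-\ord(\gam)$, so the $\pi$-orders line up exactly. The configuration $N_1<N_2$, which lies outside Corollary~\ref{cor N_1 ge N_2}, would be treated directly via Lemma~\ref{lem_criterion} with the analogous choice $f(\alp)=\alp^{2}/(\gam\pi)$ and $g=0$, where the same identity forces the required compatibility $f(\alp)\equiv f(\be)\pmod{\be-\alp}$.
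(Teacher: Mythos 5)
Your skeleton agrees with the paper's: both proofs split into the two arithmetic cases according to whether $\Gal(L_K\cap\tilde{K}/K)$ corresponds to the larger or the smaller elementary divisor of $A_K\ox_{\Zp}\mathcal{O}\isom\mathcal{O}/\gam\op\mathcal{O}/(\alp\be/\gam)$, reduce the larger-divisor case to case (II) of Corollary~\ref{cor N_1 ge N_2}, treat the other case directly via Lemma~\ref{lem_criterion}, and exploit the identity $\frac{\alp}{\gam}+\pi^k=\frac{\be}{\gam}$. But there is a genuine gap at precisely the point you defer as ``structural bookkeeping.'' You are not free to take $x_2=v:=\frac{\alp}{\gam}\mathbf{e}_1+\mathbf{e}_2$ (nor, in the other case, to assume $\lam_{11}=\alp/\gam$): the image of $x_2$ in $A_K\ox_{\Zp}\mathcal{O}$ must generate the \emph{specific} subgroup $\Gal(L_K/L_K\cap\tilde{K})\ox_{\Zp}\mathcal{O}$, which is one particular complement of $\Gal(L_K\cap\tilde{K}/K)\ox_{\Zp}\mathcal{O}$ among several distinct ones, and the only allowed modifications of $x_2$ (unit scaling and adding elements of $S(X_{\Ki}\ox_{\Zp}\mathcal{O})$) cannot move that subgroup onto $\langle v\rangle_{\mathcal{O}}$. (For the same reason, ``$N_1\ge N_2$'' is not a normalization one can impose; it is an arithmetic dichotomy, determined by $K$, which is why the paper must argue both cases.) So in general $\lam_{21}=\frac{\alp}{\gam}+s$ with $s\neq 0$, and then your witnesses fail the first equation outright: $f(\alp)\lam_{21}\pi=\gam\lam_{21}=\alp+s\gam\neq\alp$.

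Your fallback remark that ``the $\pi$-orders line up exactly'' does not close this gap, because the criterion is not a valuation condition: one must actually produce $f,g\in\Lam$, and the two values of $f$ are coupled by the rigidity $f(\be)\equiv f(\alp)\pmod{\be-\alp}$. The first equation forces $f(\alp)=\frac{\gam}{\pi}\left(1+\frac{s\gam}{\alp}\right)^{-1}$, the second forces $f(\be)\pi+g(\be)\gam\pi^k=\gam\left(1+\frac{s\gam}{\be}\right)^{-1}$, and since $g(\be)$ is unconstrained, solvability is equivalent to the congruence $\left(1+\frac{s\gam}{\be}\right)^{-1}-\left(1+\frac{s\gam}{\alp}\right)^{-1}\equiv 0 \pmod{\pi^k}$. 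This is true, but only because of the cancellation $\frac{1}{\be}-\frac{1}{\alp}=\frac{\alp-\be}{\alp\be}$ with $\alp-\be=-\gam\pi^k$; it is not visible from matching valuations of $\lam_{21}+\pi^k$. Proving it is exactly the content of the paper's proof, which sets $\del:=(1-\lam_{21}\gam/\alp)/\be$, uses Lemma~\ref{pi^{pi^N}x_1} to get $\ord(\del)\ge-\ord(\gam)$, rewrites $\lam_{21}+\pi^k=\frac{\be}{\gam}(1-\alp\del)$, and exhibits $f=\frac{\gam}{\pi}(1-\be\del)^{-1}$ and $g=\pi^{-k}\sum_{l\ge1}(\alp^l-\be^l)\del^l$, the divisibility of each $\alp^l-\be^l$ by $\pi^k$ being what makes $g$ integral. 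The same defect affects your sketch of the $N_1<N_2$ case, where Lemma~\ref{pi^{pi^N}x_1} only gives $\ord(1-\lam_{11}\gam/\alp)\ge\ord(\be)-\ord(\gam)$, not $\lam_{11}=\alp/\gam$, and the paper again handles general $\lam_{11}$ with an explicit nonzero $g$. In short: your outline and special-case computations are correct, but the perturbation argument you postpone is the mathematical heart of the proposition and must be supplied.
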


\begin{proof}
In this case, $A_K\ox_{\Zp}\mathcal{O} \isom \mathcal{O}/\gam \op \mathcal{O}/({\alp\beta}/{\gam})$ by Lemma \ref{A_K ox_{Zp} mathcal{O}}.
First, we suppose that $N_1=\ord(\gam)$, $N_2=\ord({\alp\beta}/{\gam})$.
Then $N_1<N_2$
(note that we cannot apply Corollary \ref{cor N_1 ge N_2} in this case).
Then, $\pi^{N_1}/\alp \notin \mathcal{O}$, so that $\pi \mid \lam_{11}$ by Lemma \ref{pi^{pi^N}x_1}.
Hence, $\lam_{12} \in \mathcal{O}^\x$.
We may assume that $\lam_{12}=1$ and also that $\lam_{21}=1$.
By Lemma \ref{pi^{pi^N}x_1}, we see that 
$\lam_{12}-\lam_{11}\gam/\alp$
must be divided by $\pi$,
so that we obtain
$\ord(\lam_{11})=\ord(\alp/\gam)$.
Put
\begin{eqnarray*}
&&
f(S)
:=
\lam_{11}\f{\alp}{\pi} 
\in \mathcal{O},
\\
&&
g(S)
:=
\f{\beta}{\beta-\alp}\cdot \f{\lam_{11}+\pi^k}{1+\lam_{22}\pi^k}-\f{\alp}{\beta-\alp}\lam_{11}
=
\f{(\beta-\alp)\lam_{11}+(\beta-\alp\lam_{11}\lam_{22})\pi^k}{(\beta-\alp)(1+\lam_{22}\pi^k)}.
\end{eqnarray*}
Then $g(S) \in \mathcal{O}$, since $1+\lam_{22}\pi^k \in \mathcal{O}^\x$ and $\ord(\beta-\alp)-k=\ord(\gam)<\min\{ \ord(\alp),\ord(\beta) \}$.
They satisfy (\ref{criterion}).

Second, we suppose that $N_1=\ord({\alp\beta}/{\gam})$, $N_2=\ord(\gam)$.
Then $N_1>N_2$ so that we can apply Corollary \ref{cor N_1 ge N_2}.
By Lemma \ref{pi^{pi^N}x_1}, we know that $\lam_{21}\gam/\alp \in \mathcal{O}$,
and hence $\ord(\lam_{21})>0$.
This implies that $\lam_{11},\lam_{22} \in \mathcal{O}^\x$, so we are reduced only to the case of (II).
Thus, we may assume that
$\lam_{11}=\lam_{22}=1$ and $\lam_{12}=0$.
Put 
$\del:=(\lam_{22}-\lam_{21}\gam/\alp)/\beta=(1-\lam_{21}\gam/\alp)/\beta$.
Then
\begin{eqnarray*}
\lam_{21}+\pi^k
=
\f{\alp}{\gam}(1-\beta\del)+\pi^k
=
\f{\beta}{\gam}(1-\alp\del).
\end{eqnarray*}
Note that $\ord(\del)\ge -\ord(\gam)>-\min\{ \ord(\alp),\ord(\beta)\}$ by Lemma \ref{pi^{pi^N}x_1},
so that both $1-\alp \del$ and $1-\beta \del$ are units.
Now, put
\begin{eqnarray*}
&&
f(S)
:=
\f{\gam}{\pi} (1-\beta\del)^{-1}
\in \mathcal{O},
\\
&&
g(S)
:=
\pi^{-k}\left( (1-\alp\del)^{-1}-(1-\beta\del)^{-1})\right)
=
\pi^{-k}\sum_{l=1}^\infty \left( \alp^l-\beta^l \right)\del^l.
\end{eqnarray*}
Here, the sum is convergent since $\ord(\alp\del)>0$ and $\ord(\beta\del)>0$.
Hence $g(S) \in \mathcal{O}$, and they satisfy the condition in Corollary \ref{cor N_1 ge N_2} (ii).
\end{proof}


The following proposition does not need the assumption that $k>0$.

\begin{prop}\label{ord(gam)>min{ ord(alp),ord(beta)}}
If $\ord(\gam)>\min\{ \ord(\alp),\ord(\beta)\}$,
then $X_{\tilde{K}}$ is not $\Zp[[S,T]]$-cyclic.
\end{prop}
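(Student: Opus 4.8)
The plan is to suppose, for contradiction, that $X_{\tilde{K}}$ is $\Zp[[S,T]]$-cyclic and to rule this out by a valuation count. The decisive first step is to observe that the hypothesis $\ord(\gam)>\min\{\ord(\alp),\ord(\beta)\}$ forces $\ord(\alp)=\ord(\beta)$: if instead $\ord(\alp)\neq\ord(\beta)$, then $\ord(\beta-\alp)=\min\{\ord(\alp),\ord(\beta)\}$ by the ultrametric inequality, while at the same time $\ord(\beta-\alp)=\ord(\gam)+k\ge\ord(\gam)>\min\{\ord(\alp),\ord(\beta)\}$, a contradiction. Hence I may set $n:=\ord(\alp)=\ord(\beta)$ and write $c:=\ord(\gam)$, so that $c>n$. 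Lemma \ref{A_K ox_{Zp} mathcal{O}} then gives $A_K\ox_{\Zp}\mathcal{O}\isom\mathcal{O}/\alp\op\mathcal{O}/\beta\isom(\mathcal{O}/\pi^{n})^{\op 2}$; since this module is homocyclic, both of its invariant factors equal $\pi^{n}$, which forces $N_1=N_2=n$. In particular $N_1\ge N_2$, so I can invoke the normalization made just before Corollary \ref{cor N_1 ge N_2} and reduce to the two cases (I) and (II).

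In Case (I) I would apply Corollary \ref{cor N_1 ge N_2} (i): cyclicity would require $\beta=(1+\lam_{22}\pi^{k})\gam\,x$ for some $x\in\mathcal{O}$. This is impossible, because $\ord\big((1+\lam_{22}\pi^{k})\gam\,x\big)\ge\ord(\gam)=c>n=\ord(\beta)$. This already reveals the mechanism of the proposition: the valuation of the eigenvalue-gap $\gam$ is too large for $\beta$ to be a multiple of it.

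In Case (II) the criterion is Corollary \ref{cor N_1 ge N_2} (ii) (equivalently Lemma \ref{lem_criterion} with $\lam_{11}=\lam_{22}=1$ and $\lam_{12}=0$): one must find $f,g\in\Lam$ with $\alp=f(\alp)\lam_{21}\pi$ and $\beta=(\lam_{21}+\pi^{k})\big(f(\beta)\pi+g(\beta)\gam\pi^{k}\big)$. Assuming such $f,g$ exist, the first equation gives $\lam_{21}\neq 0$ and $f(\alp)\pi=\alp/\lam_{21}$; writing $f(\beta)=f(\alp)+(\beta-\alp)h(\beta)$ by the division lemma, substituting into the second equation, and using $\beta-\alp=\gam\pi^{k}$, a direct manipulation should reduce, after cancelling $\pi^{k}$ and clearing $\lam_{21}$, to an identity of the form $\lam_{21}\gam-\alp=\lam_{21}(\lam_{21}+\pi^{k})\gam\big(\pi h(\beta)+g(\beta)\big)$. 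Comparing valuations then finishes the argument: since $\ord(\lam_{21}\gam)\ge c>n=\ord(\alp)$ there is no cancellation on the left, so its valuation is exactly $n$, whereas the right-hand side is divisible by $\gam$ and hence has valuation $\ge c>n$. The main obstacle is precisely this elimination in Case (II): one has to use the division lemma to clear the interpolation values $f(\beta)$ and $g(\beta)$ and isolate the comparison, and to verify that the inequality $c>n$ is exactly what prevents any cancellation from lifting $\ord(\lam_{21}\gam-\alp)$ above $n$.
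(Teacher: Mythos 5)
Your proposal is correct and follows essentially the same route as the paper: reduce to $\ord(\alp)=\ord(\beta)=N_1=N_2$ so that Corollary \ref{cor N_1 ge N_2} applies, dispatch Case (I) by the valuation of $\beta/\gam$, and in Case (II) combine the division lemma with a valuation count. Your integral identity $\lam_{21}\gam-\alp=\lam_{21}(\lam_{21}+\pi^k)\gam\left(\pi h(\beta)+g(\beta)\right)$ is precisely the paper's final equation (which is phrased via the unit $s:=1-\lam_{21}\gam/\alp$) multiplied through by $\lam_{21}\beta/\pi^k$, so the two contradictions coincide.
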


\begin{proof}
Note that $\ord(\alp)=\ord(\beta)=N_1=N_2$ in this case, so that we can apply the criterion in Corollary \ref{cor N_1 ge N_2}.
Assume that $X_{\tilde{K}}$ is $\Zp[[S,T]]$-cyclic.
In the case of (I), the condition 
that
$(1+\lam_{22}
\pi^k
)^{-1}\beta/\gam \in \mathcal{O}$ 
contradicts the inequality $\ord(\gam)>\ord(\beta)$.
We consider 
the case of (II).
By $\ord(\gam)>\ord(\alp)$, 
we have
$
s:=1-\lam_{21}\gam/\alp \in \mathcal{O}^\x.
$
By the above assumption and Corollary \ref{cor N_1 ge N_2} (ii), there are some $f(S),g(S) \in \Lam$ such that
$\alp=f(\alp)\lam_{21}\pi$ and 
\begin{eqnarray*}
\beta\gam
&=&
(\lam_{21}\gam+(\beta-\alp))\left(f(\beta)\pi+g(\beta)\gam\pi^k\right)
\\
&=&
\beta\left(1-\f{\alp}{\beta}s\right)\left(f(\beta)\pi+g(\beta)\gam\pi^k\right).
\end{eqnarray*}
Now, $f(\beta)$ has a form $f(\beta)=f(\alp)+(\beta-\alp)A(\beta)$ with some $A(S) \in \Lam$
by division lemma,
and $f(\alp)\pi=\alp/\lam_{21}=\gam(1-s)^{-1}$.
Therefore, 
\begin{eqnarray*}
1
&=&
\left(1-\f{\alp}{\beta}s\right)\left((1-s)^{-1}+A(\beta)\pi^{k+1}+g(\beta)\pi^k\right)
\\
&=&
\left(1-\f{\alp}{\beta}s\right)(1-s)^{-1}
+
\left(1-\f{\alp}{\beta}s\right)\left(A(\beta)\pi+g(\beta)\right)\pi^k.
\end{eqnarray*}
So we have
\begin{eqnarray*}
-s\f{1}{\lam_{21}}\cdot\f{\alp}{\beta}\pi^k
=
\left(1-\f{\alp}{\beta}s\right)\left(A(\beta)\pi+g(\beta)\right)\pi^k.
\end{eqnarray*}
The order of the left hand side is equal to or less than $k$, since $s \in \mathcal{O}^\x$.
On the other hand, the order of $1-{\alp}s/{\beta}$ is greater than $0$.
In fact, $s$ has a form $s=1+\pi t$ with some $t \in \mathcal{O}$, so that
$$
1-\f{\alp}{\beta}s
\equiv
1-\f{\alp}{\beta}
=
\f{\beta-\alp}{\beta}
\equiv 0
\mod \pi,
$$
since $\ord(\gam)>\ord(\beta)$.
This is a contradiction.
\end{proof}

\begin{prop}\label{k>0 ord(gam)=min{ ord(alp),ord(beta)}}
Suppose that $k>0$ and $\ord(\gam)=\min\{ \ord(\alp),\ord(\beta)\}$.
Then $X_{\tilde{K}}$ is $\Zp[[S,T]]$-cyclic if and only if $\lam_{21} \in \mathcal{O}^\x$.
\end{prop}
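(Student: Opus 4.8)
The plan is to exploit the two hypotheses to force the $\pi$-adic orders of $\alp$, $\be$, and $\gam$ to coincide, which collapses the problem to the symmetric situation $N_1=N_2$ where the normal-form analysis of \S\ref{Methods} applies; after that the two directions follow from the explicit criteria in Corollary~\ref{cor N_1 ge N_2}. Concretely, from $\be-\alp=\gam\pi^k$ we get $\ord(\be-\alp)=\ord(\gam)+k$, and since $k>0$ and $\ord(\gam)=\min\{\ord(\alp),\ord(\be)\}$ this yields $\ord(\be-\alp)>\min\{\ord(\alp),\ord(\be)\}$. By the ultrametric inequality such strict growth is possible only when $\ord(\alp)=\ord(\be)$, so $a:=\ord(\alp)=\ord(\be)=\ord(\gam)$. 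As $\ord(\gam)\ge\min\{\ord(\alp),\ord(\be)\}$, Lemma~\ref{A_K ox_{Zp} mathcal{O}} gives $A_K\ox_{\Zp}\mathcal{O}\isom\mathcal{O}/\alp\op\mathcal{O}/\be$, hence $N_1=N_2=a$. In particular $N_1\ge N_2$, so we may pass to the normal forms (I) and (II) of \S\ref{Methods}: form (I) is precisely the case $\lam_{21}\in\mathcal{O}^\x$, whereas if $\lam_{21}\notin\mathcal{O}^\x$ then $\lam_{11}\lam_{22}-\lam_{12}\lam_{21}\in\mathcal{O}^\x$ forces $\lam_{22}\in\mathcal{O}^\x$, so that we are in form (II) with $\lam_{21}$ replaced by $\lam_{22}^{-1}\lam_{21}$, which is still a non-unit.

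For the implication $\lam_{21}\in\mathcal{O}^\x\Rightarrow X_{\tilde{K}}$ cyclic, I would apply Corollary~\ref{cor N_1 ge N_2}(i) in form (I). Since $k>0$, the element $1+\lam_{22}\pi^k$ is a unit, and $\ord(\be)=a=\ord(\gam)$ shows that $x:=\be/((1+\lam_{22}\pi^k)\gam)$ lies in $\mathcal{O}$; thus the equation $\be=(1+\lam_{22}\pi^k)\gam x$ is solvable and $X_{\tilde{K}}$ is $\Zp[[S,T]]$-cyclic. Note that this conclusion is insensitive to the precise value of $\lam_{22}$ produced by the reduction, exactly because $k>0$ keeps $1+\lam_{22}\pi^k$ a unit.

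For the converse, assume $\lam_{21}\notin\mathcal{O}^\x$, so $\ord(\lam_{21})\ge1$ and we are in form (II). If $X_{\tilde{K}}$ were cyclic, Corollary~\ref{cor N_1 ge N_2}(ii) would supply $f,g\in\Lam$ with $\alp=f(\alp)\lam_{21}\pi$ and $\be=(\lam_{21}+\pi^k)(f(\be)\pi+g(\be)\gam\pi^k)$. The first identity gives $f(\alp)\pi=\alp/\lam_{21}$; writing $f(\be)=f(\alp)+\gam\pi^k A$ by the division lemma and substituting into the second, then using $\be-\alp=\gam\pi^k$ and dividing by $\pi^k$, one is led to $\gam(1-(\lam_{21}+\pi^k)h)=\alp/\lam_{21}$ for some $h\in\mathcal{O}$. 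Here $\ord(\lam_{21}+\pi^k)\ge1$, so $1-(\lam_{21}+\pi^k)h$ is a unit and the left-hand side has order $a$, whereas the right-hand side has order $a-\ord(\lam_{21})<a$; this contradiction shows that $X_{\tilde{K}}$ is not $\Zp[[S,T]]$-cyclic.

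The decisive and least mechanical step is the opening order-collapse observation: recognizing that $k>0$ together with $\ord(\gam)=\min\{\ord(\alp),\ord(\be)\}$ is consistent only with $\ord(\alp)=\ord(\be)=\ord(\gam)$. This is what reduces the proposition to the symmetric case $N_1=N_2$ and makes both criteria of Corollary~\ref{cor N_1 ge N_2} directly applicable; the remaining work is the routine, but careful, elimination of $f$ and the $\pi$-adic order comparison in the only-if direction.
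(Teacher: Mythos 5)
Your proof is correct and takes essentially the same route as the paper: first collapse $\ord(\alp)=\ord(\be)=\ord(\gam)$ and $N_1=N_2$ (which the paper asserts with less explanation than your ultrametric argument), then apply Corollary \ref{cor N_1 ge N_2} — case (I) giving cyclicity when $\lam_{21}\in\mathcal{O}^\x$, and case (II) with non-unit $\lam_{21}$ yielding a contradiction via the division lemma and a $\pi$-adic order comparison. The only cosmetic difference is that the paper routes the final contradiction through the unit $s=1-\lam_{21}\gam/\alp$ borrowed from the proof of Proposition \ref{ord(gam)>min{ ord(alp),ord(beta)}}, whereas you eliminate $f$ directly to reach $\gam\bigl(1-(\lam_{21}+\pi^k)h\bigr)=\alp/\lam_{21}$; the two computations are equivalent.
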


\begin{proof}
Note that $\ord(\gam)=\ord(\alp)=\ord(\beta)=N_1=N_2$, since $k>0$.
Therefore we can apply the criterion in Corollary \ref{cor N_1 ge N_2}.
In the case of (I), we see that $(1+\lam_{22}\pi^k)^{-1}\beta/\gam \in \mathcal{O}$, which implies 
that
$X_{\tilde{K}}$ is $\Zp[[S,T]]$-cyclic.
We consider 
the case of (II).
In the case where $\lam_{21} \in \mathcal{O}^\x$, we are reduced to 
the case of (I), 
so we may assume that $\pi \mid \lam_{21}$.
Then we have only to show that $X_{\tilde{K}}$ is not $\Zp[[S,T]]$-cyclic.
Assume the contrary.
Note that $s:=1-\lam_{21}\gam/\alp \in \mathcal{O}^\x$.
In the same way as Proposition \ref{ord(gam)>min{ ord(alp),ord(beta)}}, we get a contradiction,
since $(\beta-\alp)/\beta \equiv 0$ mod $\pi$ by $k>0$.
\end{proof}



\subsection{The case where $k=0$}\label{k=0}

Suppose that
$k=0$.
Then $X_{\Ki}\ox_{\Zp}\mathcal{O} \isom \Lam/(S-\alp)\op \Lam/(S-\beta)$.
We use the standard basis
$
\begin{bmatrix}
1
\\
0
\end{bmatrix}
$,
$
\begin{bmatrix}
0
\\
1
\end{bmatrix}
$, 
instead of 
$
\begin{bmatrix}
1
\\
1
\end{bmatrix}
$,
$
\begin{bmatrix}
0
\\
1
\end{bmatrix}
$.
If $\ord(\gam)>\min\{ \ord(\alp),\ord(\beta)\}$, then $X_{\tilde{K}}$ is not $\Zp[[S,T]]$-cyclic by Proposition \ref{ord(gam)>min{ ord(alp),ord(beta)}}.
So, in the following, we consider the case where
$\ord(\gam)=\min\{ \ord(\alp),\ord(\beta)\}$.
Moreover, we may assume that $\ord(\alp) \le \ord(\beta)$.
Express the generators $x_1$, $x_2$ as
\begin{eqnarray*}
x_1
=
\mu_{11}
\begin{bmatrix}
1
\\
0
\end{bmatrix}
+
\mu_{12}
\begin{bmatrix}
0
\\
1
\end{bmatrix}
,
\ \ 
x_2
=
\mu_{21}
\begin{bmatrix}
1
\\
0
\end{bmatrix}
+
\mu_{22}
\begin{bmatrix}
0
\\
1
\end{bmatrix}
\end{eqnarray*}
for some $\mu_{ij} \in \mathcal{O}$.
Then $Sx_1 \in (\pi,S)N \ox_{\Zp}\mathcal{O}$ if and only if
there exist some $f(S),g(S) \in \Lam$ such that
\begin{eqnarray}\label{criterion in k=0}
\begin{bmatrix}
\mu_{11}\alp
\\
\mu_{12}\beta
\end{bmatrix}
=
\begin{bmatrix}
\mu_{21}\pi \, f(\alp)+\mu_{21}\alp \, g(\alp)
\\
\mu_{22}\pi \, f(\beta)+\mu_{22}
\beta
\, g(\beta)
\end{bmatrix}.
\end{eqnarray}


\begin{prop}\label{k=0 ord(gam)=ord(alp)<=ord(beta) N_1<N_2}
Assume that $\ord(\beta-\alp)=\ord(\alp) \le \ord(\beta)$ and that $N_1<N_2$.
Then $X_{\tilde{K}}$ is $\Zp[[S,T]]$-cyclic if and only if 
$\mu_{21}$ is in $\mathcal{O}^\x$.
\end{prop}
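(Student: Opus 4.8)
The plan is to decide solvability of the criterion $(\ref{criterion in k=0})$, which by Lemma $\ref{pre-lem_criterion}$ is equivalent to $X_{\tilde K}$ being $\Zp[[S,T]]$-cyclic. Writing $a=f(\alp)$, $b=f(\be)$, $c=g(\alp)$, $d=g(\be)$, the two coordinate equations of $(\ref{criterion in k=0})$ become
\[
\mu_{11}\alp=\mu_{21}(\pi a+\alp c),\qquad \mu_{12}\be=\mu_{22}(\pi b+\be d).
\]
The key reformulation is that, as $f,g$ range over $\Lam=\mathcal{O}[[S]]$, the pairs $(a,b)$ and $(c,d)$ range over \emph{all} pairs in $\mathcal{O}$ congruent modulo $\be-\alp=\gam$ (using $k=0$): indeed $f(\be)-f(\alp)\in(\be-\alp)\mathcal{O}$ for every $f$, and conversely any congruent pair is hit by a linear polynomial. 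So the problem reduces to: do there exist $a,b,c,d\in\mathcal{O}$ with $a\equiv b$ and $c\equiv d\pmod{\gam}$ solving the two displayed equations?

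Before analysing these I would record the shape of the $\mu_{ij}$. Since $\ord\gam=\ord\alp\le\ord\be$, Lemma $\ref{A_K ox_{Zp} mathcal{O}}$ gives $A_K\ox_{\Zp}\mathcal{O}\isom\mathcal{O}/\alp\op\mathcal{O}/\be$, whose invariant-factor orders are $\{\ord\alp,\ord\be\}$; matching these with $\{N_1,N_2\}$ and using $N_1<N_2$ forces $N_1=\ord\alp<\ord\be=N_2$. Reading off the orders of $x_1,x_2$ in $(\mathcal{O}/\alp)\op(\mathcal{O}/\be)$: the order of $x_2$ being $\pi^{N_2}=\pi^{\ord\be}$ forces $\mu_{22}\in\mathcal{O}^\x$; the order of $x_1$ being $\pi^{\ord\alp}<\pi^{\ord\be}$ forces $\ord\mu_{12}\ge\ord\be-\ord\alp\ge1$; and the Nakayama condition $\mu_{11}\mu_{22}-\mu_{12}\mu_{21}\in\mathcal{O}^\x$ (as in \S\ref{Methods}) then forces $\mu_{11}\in\mathcal{O}^\x$, since $\mu_{12}\mu_{21}\in\pi\mathcal{O}$.

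For sufficiency, assume $\mu_{21}\in\mathcal{O}^\x$. I would solve the first equation by the explicit choice $a=0$, $c=\mu_{21}^{-1}\mu_{11}$, a unit; to respect the congruences I set $b=\gam b'$ and $d=c$. The second equation then reads $\pi\gam b'=\be(\mu_{22}^{-1}\mu_{12}-c)$, i.e. $\pi b'=\be(\mu_{22}^{-1}\mu_{12}-c)/\gam$. Here $\mu_{22}^{-1}\mu_{12}-c$ is a unit (unit minus non-unit), so the right side has order $\ord\be-\ord\alp\ge1$ and is divisible by $\pi$; hence $b'\in\mathcal{O}$ exists. Thus $(\ref{criterion in k=0})$ is solvable and $X_{\tilde K}$ is $\Zp[[S,T]]$-cyclic.

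For necessity I would argue by contradiction, and this order-bookkeeping is the main obstacle. Suppose $\pi\mid\mu_{21}$ and a solution exists. From the first equation, since $\mu_{11}$ is a unit and $\ord(\alp c)\ge\ord\alp$, the term $\pi a$ must dominate, giving $\ord(\pi a)=\ord\alp-\ord\mu_{21}<\ord\alp$, whence $\ord a=\ord\alp-\ord\mu_{21}-1$ and in particular $\ord\mu_{21}\le\ord\alp-1$ (otherwise $\pi a+\alp c\in\pi\mathcal{O}$ already contradicts the order). The crucial point is that $\ord a<\ord\alp=\ord\gam$, so the congruence $a\equiv b\pmod{\gam}$ forces $\ord b=\ord a$, hence $\ord(\pi b)=\ord\alp-\ord\mu_{21}<\ord\be$. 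But the second equation rearranges to $\pi b=\be(\mu_{22}^{-1}\mu_{12}-d)$, whose right side has order $\ge\ord\be$; this contradicts $\ord(\pi b)<\ord\be$. Therefore no solution exists and $X_{\tilde K}$ is not $\Zp[[S,T]]$-cyclic. The delicate mechanism throughout is that the congruence modulo $\gam$ transports the order information from the $\alp$-equation to the $\be$-equation precisely because $\ord a$ already lies below $\ord\gam$.
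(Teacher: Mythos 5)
Your proposal is correct and takes essentially the same route as the paper: both reduce the statement to solvability of the two scalar equations coming from (\ref{criterion in k=0}) (after pinning down $N_1=\ord(\alp)$, $N_2=\ord(\beta)$ and the unit/non-unit pattern of the $\mu_{ij}$), settle sufficiency by exhibiting an explicit solution, and settle necessity by an ultrametric order contradiction whose engine is the congruence $f(\alp)\equiv f(\beta) \pmod{\beta-\alp}$ --- which the paper extracts from the division lemma exactly where needed, while you package it (together with its converse) as an interpolation equivalence up front. The remaining differences are cosmetic: the paper normalizes $\mu_{11}=\mu_{22}=1$ and its sufficiency construction uses constant $f,g$, whereas you keep the units and take $f(\alp)=0$ with a linear $f$.
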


\begin{proof}
In this case, 
$
A_K \ox_{\Zp} \mathcal{O} 
=
(\mathcal{O}/\pi^{N_1})x_1 \op (\mathcal{O}/\pi^{N_2})x_2 
\isom 
\mathcal{O}/\alp \op \mathcal{O}/\beta
$.
Hence $N_1=\ord(\alp)$, $N_2=\ord(\beta)$.
Therefore, we have $\ord(\mu_{12}) >0$.
In fact, if $\mu_{12} \in \mathcal{O}^\x$, then the order of $x_1$ in $A_K \ox_{\Zp} \mathcal{O}$ is $\#(\mathcal{O}/\pi^{N_2})$, which is a contradiction.
Thus, we may assume that $\mu_{11}=\mu_{22}=1$.
By (\ref{criterion in k=0}), $X_{\tilde{K}}$ is $\Zp[[S,T]]$-cyclic if and only if there exist some $f(S),g(S) \in \Lam$ such that 
$$
\alp=\mu_{21}(\pi f(\alp)+\alp g(\alp)),
\ \ 
\beta\mu_{12}=\pi f(\beta)+\beta g(\beta).
$$
Assume that $\ord(\mu_{21})>0$ and 
that
$X_{\tilde{K}}$ is $\Zp[[S,T]]$-cyclic.
By division lemma,
$f(\alp)=f(\beta)+(\alp-\beta)A(\alp)$ with some $A(S) \in \Lam$.
Then 
\begin{eqnarray*}
\ord(\alp)
&>&
\ord(\pi f(\beta)+\pi(\alp-\beta)A(\alp)+\alp g(\alp))
\\
&=&
\ord(\beta\mu_{12}-\beta g(\beta)+\pi(\alp-\beta)A(\alp)+\alp g(\alp)) \ge \ord (\alp).
\end{eqnarray*}
This is a contradiction.
Hence, if $X_{\tilde{K}}$ is $\Zp[[S,T]]$-cyclic, then $\mu_{21} \in \mathcal{O}^\x$.
Conversely, 
let $\mu_{21}$ be in $\mathcal{O}^\x$.
Put
$$
f(S):=\f{\alp\beta}{\pi(\beta-\alp)}\left( \mu_{21}^{-1}-\mu_{12} \right)
\in \mathcal{O},
\  \ 
g(S):=\f{\beta\mu_{12}-\alp\mu_{21}^{-1}}{\beta-\alp}
\in \mathcal{O}.
$$
Then we can easily check that
$\mu_{21}(\pi f(\alp)+\alp g(\alp))=\alp$ and that $\pi f(\beta)+\beta g(\beta)=\beta\mu_{12}$.
Hence $X_{\tilde{K}}$ is $\Zp[[S,T]]$-cyclic.
\end{proof}

\begin{prop}\label{k=0 ord(gam)=ord(alp)<=ord(beta) N_1>=N_2}
Assume that $\ord(\beta-\alp)=\ord(\alp) \le \ord(\beta)$ and that $N_1 \ge N_2$.
Then $X_{\tilde{K}}$ is $\Zp[[S,T]]$-cyclic if and only if 
it holds that $\mu_{21} \in \mathcal{O}^\x$ and $\ord(\mu_{22})=\ord(\beta)-\ord(\alp)$.
\end{prop}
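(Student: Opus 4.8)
The plan is to combine the criterion (\ref{criterion in k=0}) with two structural facts: the shape of the chosen generators forced by $N_1\ge N_2$, and a single valuation estimate that holds whenever a solution $f(S),g(S)\in\Lam$ exists. Write $a:=\ord(\alp)$ and $b:=\ord(\beta)$, so the hypothesis reads $\ord(\beta-\alp)=a\le b$ and, by Lemma \ref{A_K ox_{Zp} mathcal{O}}, $A_K\ox_{\Zp}\mathcal{O}\isom\mathcal{O}/\alp\op\mathcal{O}/\beta$. Matching this with $(\mathcal{O}/\pi^{N_1})x_1\op(\mathcal{O}/\pi^{N_2})x_2$ and using $N_1\ge N_2$ gives $N_1=b$, $N_2=a$. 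First I would record the consequences: since $x_1$ has order $\pi^{b}$ and $a\le b$, its second coordinate $\mu_{12}$ is a unit, so after scaling $x_1$ by a unit I may assume $\mu_{12}=1$; since $x_2$ has order $\pi^{a}$, its second coordinate satisfies $\ord(\mu_{22})\ge b-a$; and since $x_1,x_2$ is a minimal system of generators, their residues are linearly independent over the residue field, i.e. $\mu_{11}\mu_{22}-\mu_{21}\not\equiv 0\pmod{\pi}$ (recall $\mu_{12}=1$).

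The workhorse is the following estimate. Put $H_\alp:=\pi f(\alp)+\alp g(\alp)$ and $H_\beta:=\pi f(\beta)+\beta g(\beta)$, so (\ref{criterion in k=0}) reads $\mu_{11}\alp=\mu_{21}H_\alp$ and $\beta=\mu_{22}H_\beta$. Whenever $f,g\in\Lam$, the division lemma lets me write $f(\beta)-f(\alp)=(\beta-\alp)B$ and $g(\beta)-g(\alp)=(\beta-\alp)C$ with $B,C\in\mathcal{O}$, whence $H_\beta-H_\alp=(\beta-\alp)(\pi B+\beta C+g(\alp))$ and therefore $\ord(H_\beta-H_\alp)\ge\ord(\beta-\alp)=a$. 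This congruence $H_\alp\equiv H_\beta\pmod{\pi^{a}}$, together with the valuations forced by the two equations, drives both directions; note it is here that the exact identity $\ord(\beta-\alp)=a$ (not a mere inequality) is used.

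For the ``only if'' direction, suppose $X_{\tilde K}$ is $\Zp[[S,T]]$-cyclic, so such $f,g$ exist. From $\beta=\mu_{22}H_\beta$ I get $\ord(H_\beta)=b-\ord(\mu_{22})\le a$. I would first rule out $\ord(H_\beta)<a$: then the estimate forces $\ord(H_\alp)=\ord(H_\beta)<a$, and $\mu_{11}\alp=\mu_{21}H_\alp$ gives $\ord(\mu_{11})<\ord(\mu_{21})$, so $\mu_{21}$ is a non-unit; invertibility of the residue matrix then forces $\mu_{11},\mu_{22}\in\mathcal{O}^\x$, whence $\ord(H_\beta)=b\ge a$, a contradiction. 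Hence $\ord(H_\beta)=a$, i.e. $\ord(\mu_{22})=b-a$. Feeding this back, the estimate gives $\ord(H_\alp)\ge a$, so $\mu_{11}\alp=\mu_{21}H_\alp$ yields $\ord(\mu_{21})\le\ord(\mu_{11})$; combined with invertibility of the residue matrix (using $\mu_{22}\equiv 0\pmod\pi$ when $b>a$, and $\mu_{22}\in\mathcal{O}^\x$ when $b=a$) this forces $\mu_{21}\in\mathcal{O}^\x$, as claimed.

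For the converse I would construct $f,g$ explicitly. Assuming $\mu_{21}\in\mathcal{O}^\x$ and $\ord(\mu_{22})=b-a$, the required values are $H_\alp=\mu_{11}\alp/\mu_{21}$ (of order $\ge a$) and $H_\beta=\beta/\mu_{22}$ (of order exactly $a$). I would take $g$ constant equal to some $G\in\mathcal{O}$ and set $f(\alp)=(H_\alp-\alp G)/\pi$, $f(\beta)=(H_\beta-\beta G)/\pi$, which lie in $\mathcal{O}$ since $a\ge 1$. A power series $f\in\Lam$ realizing this pair of values exists precisely when $f(\alp)\equiv f(\beta)\pmod{\beta-\alp}$; because $\ord(\beta-\alp)=a$ and the leading $\pi^{a}$-coefficient of $H_\alp-H_\beta+(\beta-\alp)G$ depends linearly on $G\bmod\pi$ with unit coefficient, I can choose $G$ modulo $\pi$ to force this congruence, after which $(f(\alp),f(\beta))$ is realizable and (\ref{criterion in k=0}) holds. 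The main obstacle is the bookkeeping in the degenerate stratum $b=a$, where $\mu_{22}$ is a unit and the coordinates $\mathcal{O}/\pi^{a}\op\mathcal{O}/\pi^{b}$ are symmetric, so the normalization $\mu_{12}=1$ and the residue-matrix step must be argued rather than read off; this is exactly where the hypotheses $\ord(\beta-\alp)=\ord(\alp)$ and $N_1\ge N_2$ enter decisively.
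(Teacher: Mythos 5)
Most of your argument is sound and runs parallel to the paper's: the division-lemma estimate $\ord(H_\beta-H_\alp)\ge\ord(\beta-\alp)$, the valuation bookkeeping played against the invertibility of the residue matrix $\mu_{11}\mu_{22}-\mu_{12}\mu_{21}\in\mathcal{O}^\x$, and the explicit constant solutions $f,g$ in the converse are all correct, and they settle the proposition whenever $\ord(\alp)<\ord(\beta)$. The genuine gap is the one you yourself flag and postpone: the opening normalization. Your claim that ``since $x_1$ has order $\pi^{b}$ and $a\le b$, its second coordinate $\mu_{12}$ is a unit'' (with your $a=\ord(\alp)$, $b=\ord(\beta)$) is only forced when $a<b$. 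When $a=b$, which the hypotheses allow, the order of the image of $x_1$ in $\mathcal{O}/\alp\op\mathcal{O}/\beta$ only forces $\min\{\ord(\mu_{11}),\ord(\mu_{12})\}=0$, so the configuration $\mu_{11}\in\mathcal{O}^\x$, $\pi\mid\mu_{12}$ can occur, and unit rescaling of $x_1$ cannot reach $\mu_{12}=1$ there. Since both directions of your proof are written against $\mu_{12}=1$ --- already your translation of (\ref{criterion in k=0}) into $\beta=\mu_{22}H_\beta$ uses it --- this case is simply not covered; saying it ``must be argued rather than read off'' names the hole but does not fill it.

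The paper closes exactly this case by using the hypothesis $N_1\ge N_2$ for more than the identification $N_1=\ord(\beta)$, $N_2=\ord(\alp)$: it grants the freedom to replace $x_1$ by $x_1$ plus a $\Lam$-multiple of $x_2$ (the third bullet of \S\ref{a system of generators}, invoked at the start of the paper's proof), which reduces everything to two charts, (I') $\mu_{21}=1,\ \mu_{11}=0,\ \mu_{12}=1$ and (II') $\mu_{22}=1,\ \mu_{11}=1,\ \mu_{12}=0$. Chart (II') is precisely the configuration you cannot normalize away; there the paper first shows $\ord(\alp)<\ord(\beta)$ cannot occur and then repeats the valuation argument. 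Alternatively, you could finish inside your own framework without that extra freedom: if $a=b$, $\pi\mid\mu_{12}$, then $\mu_{11}\in\mathcal{O}^\x$ and the residue matrix forces $\mu_{22}\in\mathcal{O}^\x$; the criterion reads $\mu_{11}\alp=\mu_{21}H_\alp$ and $\mu_{12}\beta=\mu_{22}H_\beta$, so $\ord(H_\beta)=\ord(\mu_{12})+b>a$, your workhorse estimate gives $\ord(H_\alp)\ge a$, hence $\ord(\mu_{21})=\ord(\mu_{11})+a-\ord(H_\alp)\le 0$, i.e.\ $\mu_{21}\in\mathcal{O}^\x$ (and $\ord(\mu_{22})=0=\ord(\beta)-\ord(\alp)$ is automatic); the converse goes through with the same constant $f,g$ construction. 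With either repair the proof is complete; as written, it is not.
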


\begin{proof}
In this case, we may change $x_1$ up to modulo $x_2 \Lam$
as we mentioned after the proof of Lemma \ref{an isom1}.
As in \S \ref{Methods}, we have only to consider the two cases:
{
\makeatletter
  \parsep   = 0pt
  \labelsep = .5pt
  \def\@listi{%
     \leftmargin = 20pt \rightmargin = 0pt
     \labelwidth\leftmargin \advance\labelwidth-\labelsep
     \topsep     = 0\baselineskip
     \partopsep  = 0pt \itemsep       = 0pt
     \itemindent = 0pt \listparindent = 10pt}
  \let\@listI\@listi
  \@listi
  \def\@listii{%
     \leftmargin = 20pt \rightmargin = 0pt
     \labelwidth\leftmargin \advance\labelwidth-\labelsep
     \topsep     = 0pt \partopsep     = 0pt \itemsep   = 0pt
     \itemindent = 0pt \listparindent = 10pt}
  \let\@listiii\@listii
  \let\@listiv\@listii
  \let\@listv\@listii
  \let\@listvi\@listii
  \makeatother
\ 
\begin{itemize}
\item[(I')]
$\mu_{21}=1$, $\mu_{11}=0$, $\mu_{12}=1$.
\item[(II')]
$\mu_{22}=1$, $\mu_{11}=1$, $\mu_{12}=0$.
\end{itemize}
}

First, we deal with the case of (I').
Then $\ord(\mu_{22}) \ge \ord(\beta)-\ord(\alp)$.
In fact, if $\ord(\alp)=\ord(\beta)$, then the inequality is trivial. 
And also, if $\ord(\alp)<\ord(\beta)$, then $N_2=\ord(\alp)$ by Lemma \ref{A_K ox_{Zp} mathcal{O}}, so that 
$
\pi^{N_2}x_2 \in S X_{\Ki} \ox_{\Zp} \mathcal{O}
=
\left\langle 
\begin{bmatrix}
\alp
\\
0
\end{bmatrix}
,
\begin{bmatrix}
0
\\
\beta
\end{bmatrix}
\right\rangle_{\mathcal{O}}
$.
This implies that $\mu_{22}\alp/\beta \in \mathcal{O}$.
By (\ref{criterion in k=0}),
$X_{\tilde{K}}$ is $\Zp[[S,T]]$-cyclic if and only if there exist some $f(S),g(S) \in \Lam$ such that
$$
0=\pi f(\alp)+\alp g(\alp),
\ \ 
\beta=\mu_{22}(\pi f(\beta)+\beta g(\beta)).
$$
Assume that $\ord(\mu_{22}) > \ord(\beta)-\ord(\alp)$ and $X_{\tilde{K}}$ is $\Zp[[S,T]]$-cyclic.
Since $\pi f(\beta)$ has a form $\pi f(\beta)=\pi f(\alp)+\pi(\beta-\alp)A(\beta)=-\alp g(\alp)+\pi(\beta-\alp)A(\beta)$ with some $A(S) \in \Lam$,
\begin{eqnarray*}
\ord(\beta)
&=&
\ord(\mu_{22})+\ord(-\alp g(\alp)+\pi(\beta-\alp)A(\beta)+\beta g(\beta))
\\
&>&
(\ord(\beta)-\ord(\alp))
+\ord(\alp).
\end{eqnarray*}
This is a contradiction.
Conversely, 
assume that 
$\ord(\mu_{22}) = \ord(\beta)-\ord(\alp)$.
Put
$$
f(S):=\f{-\alp\beta}{\mu_{22} \pi (\beta-\alp)}
\in \mathcal{O},
\  \ 
g(S):=\f{\beta}{\mu_{22}(\beta-\alp)}
\in \mathcal{O}.
$$
Then we can easily check
$\pi f(\alp)+\alp g(\alp)=0$ and $\mu_{22} (\pi f(\beta)+\beta g(\beta))=\beta$, and hence $X_{\tilde{K}}$ is $\Zp[[S,T]]$-cyclic.

Second, we deal with the case of (II').
By (\ref{criterion in k=0}), $X_{\tilde{K}}$ is $\Zp[[S,T]]$-cyclic if and only if there exist some $f(S),g(S) \in \Lam$ such that
$$
\alp=\mu_{21}(\pi f(\alp)+\alp g(\alp)),
\ \ 
0=\pi f(\beta)+\beta g(\beta).
$$
The similar argument in the case of (I') shows that $\ord(\alp) < \ord(\beta)$ does not occur.
In fact, the assumption $\ord(\alp) < \ord(\beta)$ yields $\mu_{22}\alp/\beta \in \mathcal{O}$, which is a contradiction since $\mu_{22}=1$.
Hence $\ord(\alp) 
=
 \ord(\beta)$.
Using this fact, in  the same way as the case of (I'), we can show that if $\ord(\mu_{21})>0$, then $X_{\tilde{K}}$ is not $\Zp[[S,T]]$-cyclic.
Conversely, if $\ord(\mu_{21})=0$, then we can prove that
$X_{\tilde{K}}$ 
is
 $\Zp[[S,T]]$-cyclic by taking
$$
f(S):=\f{\alp\beta}{\mu_{21} \pi (\beta-\alp)}
\in \mathcal{O},
\  \ 
g(S):=\f{-\alp}{\mu_{21}(\beta-\alp)}
\in \mathcal{O}.
$$
Then, the proof is completed.
\end{proof}

Summarizing all results in this section, we obtain the following.

\begin{thm}\label{main thm of classification lambda=2}
Let $p$ be an odd prime number, $K$ an imaginary quadratic field such that $p$ does not split.
Suppose that 
\makeatletter
  \parsep   = 0pt
  \labelsep = .5pt
  \def\@listi{%
     \leftmargin = 20pt \rightmargin = 0pt
     \labelwidth\leftmargin \advance\labelwidth-\labelsep
     \topsep     = 0\baselineskip
     \partopsep  = 0pt \itemsep       = 0pt
     \itemindent = 0pt \listparindent = 10pt}
  \let\@listI\@listi
  \@listi
  \def\@listii{%
     \leftmargin = 20pt \rightmargin = 0pt
     \labelwidth\leftmargin \advance\labelwidth-\labelsep
     \topsep     = 0pt \partopsep     = 0pt \itemsep   = 0pt
     \itemindent = 0pt \listparindent = 10pt}
  \let\@listiii\@listii
  \let\@listiv\@listii
  \let\@listv\@listii
  \let\@listvi\@listii
  \makeatother
\ 
\begin{itemize}
\item
$\dim_{\Fp}(A_K/p)=2$ and $\Gal(L_K \cap \tilde{K}/K)$ is a direct summand of $\Gal(L_K/K)$.
\item
The Iwasawa $\lam$-invariant of $\Ki/K$ is $2$.
\item
Let $\alp, \beta \in \overline{\Qp}$ be the roots of the distinguished polynomial generating the characteristic ideal 
of $X_{\Ki}$.
Then $\alp \neq \beta$.
\end{itemize}
Put $\mathcal{O}:=\Zp[\alp, \beta]$ and $l:=\min\{ \ord(\alp),\ord(\beta) \}$.
Let $x_2 \in X_{\Ki}$ be a preimage of a generator of $\Gal(L_K/L_K \cap \tilde{K})$.
Also, we denote by 
$
\begin{bmatrix}
\mu_{21}
\\
\mu_{22}
\end{bmatrix}
$
the image of $x_2 \ox 1$ under the injective map
$
X_{\Ki}\ox_{\Zp} \mathcal{O}
\inj 
\mathcal{O}[[S]]/(S-\alp) \op \mathcal{O}[[S]]/(S-\beta),
$
which is defined by {\rm (\ref{injection to elementary module})}.
Then, $X_{\tilde{K}}$ is $\Zp[[S,T]]$-cyclic if and only if one of the following holds:
\vspace*{-6pt}
$$
\hspace*{-4pt}
\begin{array}{llllll}
{\rm (i)} & 
k>0, & \ord(\beta-\alp)-k< l, & &  
\hspace*{-15pt}
({\rm Proposition\ \ref{k>0 ord(gam)<min{ ord(alp),ord(beta)}}})
\\
{\rm (ii)} & 
k>0, & \ord(\beta-\alp)-k= l,&   \ord(\mu_{21})=0, & 
\hspace*{-15pt}
({\rm Proposition\ \ref{k>0 ord(gam)=min{ ord(alp),ord(beta)}}})
\\ 
{\rm (iii)} & 
k=0, & \ord(\beta-\alp)= l,\ n_1< n_2, & 
\ord(\mu_{21})=0,
&
\hspace*{-15pt}
({\rm Proposition\ \ref{k=0 ord(gam)=ord(alp)<=ord(beta) N_1<N_2}})
\\
{\rm (iv)} & 
k=0, & \ord(\beta-\alp)= l,\ n_1 \ge n_2, & 
\begin{cases}
\ord(\mu_{21})=0,
\\
\ord(\mu_{22})=\ord(\beta)-\ord(\alp),
\end{cases}
&
\hspace*{-15pt}
({\rm Proposition\ \ref{k=0 ord(gam)=ord(alp)<=ord(beta) N_1>=N_2}})
\end{array}
\vspace*{-6pt}
$$
where each $n_1$ and $n_2$ is defined by 
$p^{n_1}=\#\Gal(L_K \cap \tilde{K}/K)$ and $p^{n_2}=\# \Gal(L_K/L_K \cap \tilde{K})$, respectively.
\end{thm}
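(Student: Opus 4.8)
The plan is to assemble the theorem from the propositions established throughout \S\ref{Classification in the case lam=2}: each of Propositions \ref{ord(gam)>min{ ord(alp),ord(beta)}}, \ref{k>0 ord(gam)<min{ ord(alp),ord(beta)}}, \ref{k>0 ord(gam)=min{ ord(alp),ord(beta)}}, \ref{k=0 ord(gam)=ord(alp)<=ord(beta) N_1<N_2}, and \ref{k=0 ord(gam)=ord(alp)<=ord(beta) N_1>=N_2} already decides cyclicity on one piece of a partition of the parameter space, so the remaining work is organizational: exhibit an exhaustive, mutually exclusive case division and check that each proposition governs exactly one piece. First I would recall that, by Lemma \ref{pre-lem_criterion}, all of these statements are phrased as criteria for the single condition $Sx_1 \in (\pi,S)N\ox_{\Zp}\mathcal{O}$, so no further reduction is needed, and I would record the identity $\ord(\gam)=\ord(\beta-\alp)-k$ coming from $\gam=(\beta-\alp)\pi^{-k}$.

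The organizing observation is the position of $\ord(\gam)$ relative to $l=\min\{\ord(\alp),\ord(\beta)\}$, combined with the sign of $k$. Here the ultrametric inequality $\ord(\beta-\alp)\ge l$ is what makes the table exhaustive: when $k=0$ it forces $\ord(\gam)=\ord(\beta-\alp)\ge l$, so the strict case $\ord(\gam)<l$ can occur only for $k>0$. I would then dispatch the cases. If $\ord(\gam)>l$, then Proposition \ref{ord(gam)>min{ ord(alp),ord(beta)}} gives non-cyclic and no line of the table appears. If $k>0$ and $\ord(\gam)<l$, then Proposition \ref{k>0 ord(gam)<min{ ord(alp),ord(beta)}} gives unconditional cyclicity, which is line (i). If $k>0$ and $\ord(\gam)=l$, then Proposition \ref{k>0 ord(gam)=min{ ord(alp),ord(beta)}} gives line (ii). Finally, if $k=0$ and $\ord(\gam)=l$, then after the normalization $\ord(\alp)\le\ord(\beta)$ fixed at the start of \S\ref{k=0}, Propositions \ref{k=0 ord(gam)=ord(alp)<=ord(beta) N_1<N_2} and \ref{k=0 ord(gam)=ord(alp)<=ord(beta) N_1>=N_2} split the two subcases $N_1<N_2$ and $N_1\ge N_2$ into lines (iii) and (iv). Substituting $\ord(\gam)=\ord(\beta-\alp)-k$ turns each hypothesis into the displayed inequalities on $\ord(\beta-\alp)-k$ versus $l$.

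The only genuine friction, which I expect to be the main obstacle, is reconciling the notation of the individual propositions with the uniform statement of the theorem. I would verify that the vector $[\mu_{21},\mu_{22}]^{T}$ of the theorem, defined as the image of $x_2\ox1$ under (\ref{injection to elementary module}), really is the coordinate pair used in each proposition. When $k>0$ the image of $x_2=\lam_{21}\mathbf{e}_1+\lam_{22}\mathbf{e}_2$ has first entry $\lam_{21}$, so $\mu_{21}=\lam_{21}$ and the condition $\lam_{21}\in\mathcal{O}^\times$ of Proposition \ref{k>0 ord(gam)=min{ ord(alp),ord(beta)}} becomes $\ord(\mu_{21})=0$ as in line (ii). When $k=0$, reading the image under (\ref{injection to elementary module}) in the standard basis $[1,0]^{T},[0,1]^{T}$ used in \S\ref{k=0} returns exactly the $\mu_{ij}$ there. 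Since the change $x_1\mapsto x_1+(\text{multiple of }x_2)$ used in the $N_1\ge N_2$ reductions fixes $x_2$, the pair $(\mu_{21},\mu_{22})$ is well defined throughout, so these identifications are unambiguous.

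Finally I would translate the index comparison. Since $\Gal(L_K\cap\tilde K/K)$ and $\Gal(L_K/L_K\cap\tilde K)$ are cyclic of orders $p^{n_1}$ and $p^{n_2}$, tensoring with $\mathcal{O}$ gives $N_1=e\,n_1$ and $N_2=e\,n_2$, where $e$ is the ramification index of $\mathcal{O}/\Zp$; hence $N_1<N_2\iff n_1<n_2$ and $N_1\ge N_2\iff n_1\ge n_2$, so the conditions on $n_1,n_2$ and $\ord(\mu_{22})$ in lines (iii) and (iv) are precisely those of Propositions \ref{k=0 ord(gam)=ord(alp)<=ord(beta) N_1<N_2} and \ref{k=0 ord(gam)=ord(alp)<=ord(beta) N_1>=N_2}. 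With these identifications in place, ``$\Zp[[S,T]]$-cyclic'' occurs exactly on the union of lines (i)--(iv), which is the assertion of the theorem.
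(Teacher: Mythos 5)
Your proposal is correct and takes essentially the same route as the paper: the paper offers no separate argument beyond ``Summarizing all results in this section, we obtain the following,'' and your assembly---the trichotomy of $\ord(\gam)=\ord(\beta-\alp)-k$ against $l$, the ultrametric inequality to rule out $\ord(\gam)<l$ when $k=0$, Proposition \ref{ord(gam)>min{ ord(alp),ord(beta)}} handling the excluded case $\ord(\gam)>l$, and the identifications $\mu_{21}=\lam_{21}$ and $N_i=e\,n_i$---is precisely the intended bookkeeping behind that sentence.
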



{\small

\vspace*{10pt}
\noindent
Takashi MIURA,
\\
Department of Creative Engineering,
National Institute of Technology, Tsuruoka College,
\ 
104 Sawada, Inooka, Tsuruoka, Yamagata 997-8511, Japan.
\\
{\tt 
t-miura@tsuruoka-nct.ac.jp
}
\\[10pt]
Kazuaki MURAKAMI,
\\
Department of Mathematical Sciences, Graduate School of Science and Engineering, Keio University,
\ 
Hiyoshi, Kohoku-ku, Yokohama, Kanagawa 223-8522, Japan.
\\
{\tt 
murakami\_0410@z5.keio.jp
}
\\[10pt]
Rei OTSUKI,
\\
Department of Mathematics,
Keio University,
\ 
3-14-1 Hiyoshi, Kouhoku-ku, Yokohama 223-8522, Japan.
\\
{\tt 
ray\_otsuki@math.keio.ac.jp
}
\\[10pt]
Keiji OKANO,
\\
Department of Teacher Education,
\ 
3-8-1 Tahara, Tsuru-shi, Yamanashi 402-0054, Japan.
\\
{\tt 
okano@tsuru.ac.jp
}
\

}

\end{document}